% !TeX spellcheck = en_US
\documentclass[10pt]{article}
\usepackage{lmodern}
\usepackage{amsmath}
\usepackage[T1]{fontenc}
\usepackage[utf8]{inputenc}
\usepackage{authblk}
\usepackage{amsfonts}
\usepackage{graphicx}
\usepackage{rotating}
\usepackage{amssymb}
\usepackage[english]{babel}
\usepackage{color}
\usepackage{amsthm}
\usepackage{graphicx}
\usepackage{tkz-euclide}
%\usetkzobj{all}
%\usepackage{mathpazo}
\tikzset{elegant/.style={smooth,thick,samples=50,cyan}}
%\tikzset{eaxis/.style={->,>=stealth}}
\usepackage{color,tikz}
\usetikzlibrary{patterns}
\usetikzlibrary{decorations.pathreplacing,calc}
\usepackage{rotating}
\usepackage{mathrsfs}
\usepackage{makecell}
\usepackage{microtype}
\usepackage{enumerate}
\usepackage{mathscinet}
\usepackage{array}
\usepackage{multirow}
\usepackage{booktabs}%table
\usepackage{enumerate}
\usepackage[cal=boondoxo,bb=ams]{mathalfa}
\usepackage{hyperref}
\hypersetup{hidelinks}
\usepackage{titlesec}
\pagestyle{myheadings}

%\renewcommand{\theequation}{\arabic{section}.\arabic{equation}}

 % "letter-numbered" theorems
\newtheorem{theorem}{Theorem}[section]
\newtheorem{prop}{Proposition}[section]
\newtheorem{lemma}{Lemma}[section]
\newtheorem{coro}{Corollary}[section]
\newtheorem{remark}{Remark}[section]
\newtheorem{exam}{Example}[section]

\newcommand{\ml}{\mathcal}
\newcommand{\mb}{\mathbb}

\DeclareMathOperator{\non}{nlin}
\DeclareMathOperator{\lin}{lin}
\DeclareMathOperator{\intt}{int}
\DeclareMathOperator{\extt}{ext}

\title{Critical exponent and sharp lifespan estimates for semilinear third-order evolution equations}
\author[1]{Wenhui Chen\thanks{Wenhui Chen (wenhui.chen.math@gmail.com)}}
\affil[1]{School of Mathematics and Information Science, Guangzhou University, 510006 Guangzhou, China}
%\author[2]{Ryo Ikehata\thanks{Ryo Ikehata (ikehatar@hiroshima-u.ac.jp)}}
%\affil[2]{Department of Mathematics, Division of Educational Sciences, Graduate School of Humanities and Social Sciences, Hiroshima University, 739-8524 Higashi-Hiroshima, Japan}

\setlength{\topmargin}{-10mm}
\setlength{\textwidth}{7in}
\setlength{\oddsidemargin}{-8mm}
\setlength{\textheight}{9in}
\setlength{\footskip}{1in}
\date{}
\begin{document}

\maketitle
\begin{abstract}
	\medskip
We study semilinear third-order (in time) evolution equations with  fractional Laplacian $(-\Delta)^{\sigma}$ and  power nonlinearity $|u|^p$, which was proposed by Bezerra-Carvalho-Santos \cite{Bezerra-Carvalho-Santos=2022} recently. In this manuscript, we obtain a new critical exponent $p=p_{\mathrm{crit}}(n,\sigma):=1+\frac{6\sigma}{\max\{3n-4\sigma,0\}}$ for $n\leqslant\frac{10}{3}\sigma$. Precisely, the global (in time) existence of small data Sobolev solutions is proved for the supercritical case $p>p_{\mathrm{crit}}(n,\sigma)$, and weak solutions blow up in finite time even for small data if $1<p\leqslant p_{\mathrm{crit}}(n,\sigma)$. Furthermore, to more accurately describe the blow-up time, we derive new and sharp upper bound as well as lower bound estimates for the lifespan in the subcritical case and the critical case.\\
	
	\noindent\textbf{Keywords:} third-order evolution equation, fractional Laplacian, critical exponent, global existence of small data solutions, blow-up of solutions, lifespan estimates.\\
	
	\noindent\textbf{AMS Classification (2020)} 35L76, 35L30, 35B33, 35A01, 35B44 
\end{abstract}
\fontsize{12}{15}
\selectfont
%\tableofcontents
\section{Introduction}
$\ \ \ \ $In the present paper, we consider the following Cauchy problem for third-order (in time) evolution equations (proposed by Bezerra-Carvalho-Santos \cite{Bezerra-Carvalho-Santos=2022} recently):
\begin{align}\label{Eq-Third-PDE}
\begin{cases}
u_{ttt}+\ml{A}u+\eta\ml{A}^{\frac{1}{3}}u_{tt}+\eta\ml{A}^{\frac{2}{3}}u_t=f(u),&x\in\mb{R}^n,\ t>0,\\
u(0,x)=\epsilon u_0(x),\ u_t(0,x)=\epsilon u_1(x),\ u_{tt}(0,x)=\epsilon u_2(x),&x\in\mb{R}^n,
\end{cases}
\end{align}
with a positive constant $\eta$ and power nonlinearity $f(u):=|u|^p$ carrying $p>1$, where the operator $\ml{A}^{\alpha}$ with
 fractional Laplacian $\ml{A}:=(-\Delta)^{\sigma}$ carrying $\sigma\in(0,\infty)$ is defined via
\begin{align*}
	\ml{A}^{\alpha}u=(-\Delta)^{\sigma\alpha}u:=\ml{F}^{-1}_{\xi\to x}\big(|\xi|^{2\sigma\alpha}\,\ml{F}_{x\to\xi}(u)(t,\xi)\big)\ \ \mbox{with}\ \ \alpha\in[0,1].
\end{align*}
In the above model \eqref{Eq-Third-PDE}, $\epsilon>0$ is a positive parameter describing the size of initial data. Note that the settings $\ml{A}=(-\Delta)^{\sigma}$ and $f(u)=|u|^p$ will be used throughout this paper unless otherwise specified. Our first purpose is to determine a critical exponent for the nonlinear third-order (in time) evolution equations \eqref{Eq-Third-PDE} with fractional Laplacian $\ml{A}=(-\Delta)^{\sigma}$ carrying $\sigma\in(0,\infty)$. Here, the critical exponent means the threshold condition on the exponent $p$ for global (in time) Sobolev solutions and blow-up of local (in time) weak solutions with small data. To be specific, under additional $L^1$ integrable assumptions for initial data, the new critical exponent, which will raise for the nonlinear Cauchy problem \eqref{Eq-Third-PDE} with $\eta\in(1,\infty)$, is
\begin{align}\label{Crit} 
	p=p_{\mathrm{crit}}(n,\sigma):=1+\frac{6\sigma}{(3n-4\sigma)_+}\ \ \mbox{for}\ \ 1\leqslant n\leqslant\frac{10}{3}\sigma.
\end{align}
A further purpose of this work is to derive sharp lifespan estimates for local (in time) solutions to the nonlinear Cauchy problem \eqref{Eq-Third-PDE} with $\eta\in(1,\infty)$, where the lifespan $T_{\epsilon}$ of a solution is defined by
\begin{align}\label{Lifespan_Defn}
	T_{\epsilon}&:=\sup\big\{T>0:\ \mbox{there exists a unique local (in time) solution }u\mbox{ to  the nonlinear}\notag\\ &\qquad\qquad\qquad\quad\ \mbox{ Cauchy problem \eqref{Eq-Third-PDE} on }[0,T)\mbox{ with a fixed parameter }\epsilon>0\big\}.
\end{align}
We will rigorously demonstrate  the sharpness of new lifespan estimates
\begin{align*}
T_{\epsilon}\begin{cases}
\simeq C\epsilon^{-\frac{2\sigma}{6\sigma p'-(3n+2\sigma)}}&\mbox{if}\ \  p<p_{\mathrm{crit}}(n,\sigma),\\
\simeq \exp(C\epsilon^{-(p-1)})&\mbox{if}\ \ p=p_{\mathrm{crit}}(n,\sigma),\\
=\infty&\mbox{if}\ \ p>p_{\mathrm{crit}}(n,\sigma),
\end{cases}
\end{align*}
in which $C$ is an independent of $\epsilon$, positive constant. In the last power, we denote by $p'$ H\"older's conjugate of $p>1$ such that $p'=\frac{p}{p-1}$. It is worth mentioning that the corresponding linearized Cauchy problem to \eqref{Eq-Third-PDE} with Sobolev data is local (in time) ill-posedness if $\eta\in(0,1)$ which will be proved in our manuscript.

Let us now recall several background related to our model \eqref{Eq-Third-PDE}. First of all, the authors of \cite{Fattorini=1983,Bezerra-Santos=2020} considered approximations of third-order (in time) linear evolution equations \eqref{Eq-Third-PDE} with $\eta=0$ and $f(u)\equiv0$, whose initial data belongs to some separable Hilbert spaces. The work \cite{Bezerra-Carvalho-Santos=2022} characterized the partial scale of the fractional power of order spaces associated with these operators. Later, the recent paper \cite{Bezerra-Carvalho-Santos=2022} found the threshold $\eta=1$ for the stability of the linearized model to \eqref{Eq-Third-PDE} with $f(u)\equiv0$ by analyzing the eigenvalues of  its resolvent operator. Furthermore, the authors of \cite{Bezerra-Carvalho-Santos=2022} presented the local (in time) solvability for the boundary value problem to \eqref{Eq-Third-PDE} with $\sigma=1$, $\eta\in(1,\infty)$ as well as continuously differentiable nonlinearity fulfilling the growth condition $|f'(s)|\lesssim 1+|s|^{p-1}$ carrying $1<p<\frac{3n+4}{3n-8}$ when $n\geqslant 3$. Nevertheless,  to the best knowledge of author,  global (in time) existence or blow-up of solutions to the nonlinear model \eqref{Eq-Third-PDE} are still not clear due to the high-order time-derivative and the nonlocal operator $\ml{A}$. 

More importantly, the critical exponent and sharp lifespan estimates are cutting-edge topics in semilinear evolution equations with power nonlinearity $f(u)=|u|^p$, for example, the well-known Fujita exponent $p_{\mathrm{Fuj}}(n):=1+\frac{2}{n}$ for reaction-diffusion equations \cite{Fujita-1966,Esco-Herr-1991,Esco-Levi-1995,Fuji-Iked-Waka-2020}, classical damped wave equations \cite{Li-Zhou-1995,Todo-Yord-2001,Zhang-2001,Iked-Ogaw-2016,Lai-Zhou=2019,Ebert-Girardi-Reissig=2020} and some wave equations with effective damping \cite{Wakasugi=2012,D'Abbicco-Lucente-Reissig=2013,D'Abbicco-Ebert=2017,Nasc-Palmieri-Reissig=2017} and references therein. These researches motivate us to study the critical exponent and sharp lifespan estimates to the semilinear Cauchy problem \eqref{Eq-Third-PDE} with $f(u)=|u|^p$. Note that the critical exponent $p_{\mathrm{crit}}(n,\sigma)$ proposed in \eqref{Crit} is not a generalization of the Fujita exponent $p_{\mathrm{Fuj}}(n)$. Lastly,  we also refer interested readers to another kind of important third-order (in time) PDEs, which is the so-called Jordan-Moore-Gibson-Thompson equation \cite{Kaltenbacher-Lasiecka-Pos-2012,Racke-Said-2020,B-L-2020,Kaltenbacher-Nikolic-2021,Said-2021-Bes} in recent studies of nonlinear acoustics models in thermoviscous flows.

To explore the critical exponent for the semilinear Cauchy problem \eqref{Eq-Third-PDE}, we firstly study some stabilities, sharp estimates and asymptotic profiles of solutions to the linearized Cauchy problem \eqref{Eq-Linear-Third-PDE} by applying Fourier analysis in Section \ref{Sec-linear}, where  new thresholds $\eta=1$ and $\eta=3$ will be stated (see Figure \ref{imgg}). Then, in Section \ref{Sec-GESDS} by constructing suitable time-weighted Sobolev spaces, we demonstrate the global (in time) existence of small data solution when $p>p_{\mathrm{crit}}(n,\sigma)$. For another, in the case $1<p\leqslant p_{\mathrm{crit}}(n,\sigma)$, every non-trivial weak solution blows up in finite time under a  condition for initial data, which will be proved in Section \ref{Sec-Blow-up}. They infer the critical exponent \eqref{Crit}.

To derive sharp lifespan estimates for the semilinear Cauchy problem \eqref{Eq-Third-PDE}, we separate our discussion into several cases. The upper bound estimates when $1<p< p_{\mathrm{crit}}(n,\sigma)$ and the sharp lower bound estimates when $1<p\leqslant p_{\mathrm{crit}}(n,\sigma)$ can be obtained by following the proofs in Sections \ref{Sec-GESDS} and \ref{Sec-Blow-up}, separately. With the aim of completing the sharp upper bound estimates in the critical case $p=p_{\mathrm{crit}}(n,\sigma)$, motivated by \cite{Ikeda-Sobajima=2019,Ebert-Girardi-Reissig=2020} for the semilinear classical damped waves, we introduce the test function \eqref{Su01} with suitable scaling due to fractional Laplacian $\ml{A}^{\alpha}=(-\Delta)^{\alpha\sigma}$. These results in Section \ref{Sec-Lifespan} conclude the sharpness of lifespan estimates immediately.

\medskip
\noindent\textbf{Notations:}
\medskip

$\ \  $Firstly, $c$ and $C$ denote some positive constants, which may be changed
from line to line. 
 $B_R$ stands for a ball around the origin with radius $R$ in $\mb{R}^n$.
 We write $f\lesssim g$ if there exists a positive
constant $C$ such that $f\leqslant Cg$. 
Moreover, $f\simeq g$ means that $f\lesssim g$ and $g\lesssim f$, simultaneously. 

 The Japanese bracket is denoted by $\langle x\rangle:=\sqrt{1+|x|^2}$. 
Additionally, we write $(x)_+:=\max\{x,0\}$, and $\frac{1}{(x)_+}:=\infty$ if $x\leqslant0$.
We take $[x]$ to be the integer part of $x\in\mb{R}_+$.

 We define two zones in the Fourier space ($0<\varepsilon_0\ll 1$):
\begin{align*}
	\ml{Z}_{\intt}(\varepsilon_0):=\{|\xi|\leqslant\varepsilon_0\}\ \ \mbox{and}\ \ 
	\ml{Z}_{\extt}(\varepsilon_0):=\{|\xi|\geqslant \varepsilon_0\}.
\end{align*}
The corresponding $\ml{C}^{\infty}$ cutoff functions $\chi_{\intt}(\xi)$ and $\chi_{\extt}(\xi)$ equipping their supports in the zones $\ml{Z}_{\intt}(\varepsilon_0)$ and $\ml{Z}_{\extt}(\varepsilon_0/2)$, respectively, so that $\chi_{\extt}(\xi)=1-\chi_{\intt}(\xi)$ for all $\xi \in \mb{R}^n$. 

A given function $g=g(x)$ belongs to the Gevrey-Sobolev space $G^{m,s}$ if there exists parameters $m\in\mb{R}_+$ and $s\in\mb{R}$ such that  (see, for example, \cite{Rodino=1993})
\begin{align*}
\exp\left(c\langle \xi\rangle^{\frac{1}{m}}\right)\langle\xi\rangle^s\,\ml{F}(g)\in L^2
\end{align*}
with  a positive constant $c$. Finally, we introduce the Sobolev space with additional $L^1$ integrability to be the initial data space as follows:
\begin{align}\label{Data-space-D}
\ml{B}_{\sigma}:=\big(H^{\frac{4}{3}\sigma}\cap L^1\big)\times \big(H^{\frac{2}{3}\sigma}\cap L^1\big)\times \big(L^2\cap L^1\big)
\end{align}
with $\sigma\in(0,\infty)$ in general.

\section{Main results and their explanations}
\subsection{Results and discussions for the critical exponent}
$\ \ \ \ $Let us state the global (in time) existence result in the supercritical case $p>p_{\mathrm{crit}}(n,\sigma)$ introduced by \eqref{Crit} with the initial data space \eqref{Data-space-D}.
\begin{theorem}\label{Thm-GESDS}
Let $\frac{4}{3}\sigma<n\leqslant\frac{10}{3}\sigma$ with $\sigma\in(0,\infty)$. Let us consider the semilinear Cauchy problem \eqref{Eq-Third-PDE} with $\eta\in(1,\infty)$ and  $(u_0,u_1,u_2)\in\ml{B}_{\sigma}$. Let the exponent $p$ fulfill
	\begin{align}\label{H1}
	p_{\mathrm{crit}}(n,\sigma)<p\leqslant \frac{3n}{(3n-8\sigma)_+}.
	\end{align}
	Then, there exists a constant $C>0$ and a parameter $0<\epsilon\ll 1$ such that for all $\|(u_0,u_1,u_2)\|_{\ml{B}_{\sigma}}\leqslant C$, there is a uniquely determined Sobolev solution
	\begin{align}\label{Solution-u}
	u\in\ml{C}\big([0,\infty),H^{\frac{4}{3}\sigma}\big).
	\end{align}
	Consequently, the lifespan of solutions is given by $T_{\epsilon}=\infty$. Furthermore, the solution satisfies the following estimates:
	\begin{align*}
	\|u(t,\cdot)\|_{L^2}&\lesssim
	\begin{cases}
	\epsilon\ln (\mathrm{e}+t)\|(u_0,u_1,u_2)\|_{\ml{B}_{\sigma}}&\mbox{if}\ \ n=\frac{8}{3}\sigma\ \mbox{when}\ \eta\in(3,\infty),\\
	\epsilon (1+t)^{-\frac{3n-8\sigma}{4\sigma}}\|(u_0,u_1,u_2)\|_{\ml{B}_{\sigma}}&\mbox{otherwise,}
	\end{cases}
		\end{align*}
		and
	\begin{align*}	
		\|u(t,\cdot)\|_{\dot{H}^{\frac{4}{3}\sigma}}&\lesssim\epsilon (1+t)^{-\frac{3n}{4\sigma}}\|(u_0,u_1,u_2)\|_{\ml{B}_{\sigma}}.
	\end{align*}
\end{theorem}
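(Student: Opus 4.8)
The plan is to prove global existence via a contraction mapping (Banach fixed point) argument in a suitable time-weighted Sobolev space, using the linear decay estimates established in Section~\ref{Sec-linear} together with the Duhamel representation. First I would write the solution to the semilinear problem \eqref{Eq-Third-PDE} as
\begin{align*}
u(t,x) = \epsilon\, u^{\mathrm{lin}}(t,x) + \int_0^t K(t-\tau,x) *_{(x)} f(u(\tau,\cdot))\,\mathrm{d}\tau =: \epsilon\, u^{\mathrm{lin}}(t,x) + N u(t,x),
\end{align*}
where $u^{\mathrm{lin}}$ solves the linearized Cauchy problem with data $(u_0,u_1,u_2)$ and $K(t,\cdot)$ is the fundamental solution associated to the third data slot (the one multiplying $u_2$), whose $L^2$ and $\dot H^{\frac{4}{3}\sigma}$ mapping properties from $L^1\cap L^2$ data are exactly the linear estimates proved earlier. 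I would then introduce the Banach space
\begin{align*}
X(t) := \mathcal{C}\big([0,t],H^{\frac{4}{3}\sigma}\big) \quad\text{with norm}\quad \|u\|_{X(t)} := \sup_{0\leqslant\tau\leqslant t}\Big( g_0(\tau)^{-1}\|u(\tau,\cdot)\|_{L^2} + g_1(\tau)^{-1}\|u(\tau,\cdot)\|_{\dot H^{\frac{4}{3}\sigma}}\Big),
\end{align*}
where $g_0,g_1$ are the weights dictated by the target estimates in the theorem statement, namely $g_1(\tau)=(1+\tau)^{-\frac{3n}{4\sigma}}$ and $g_0(\tau)$ equals $(1+\tau)^{-\frac{3n-8\sigma}{4\sigma}}$ in general, with the logarithmic modification $\ln(\mathrm{e}+\tau)$ in the borderline case $n=\frac83\sigma$ (when $\eta\in(3,\infty)$). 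The goal is to show that $u\mapsto \epsilon u^{\mathrm{lin}} + Nu$ maps a small ball of $X(\infty)$ into itself and is a contraction there, which gives the unique global solution and the stated decay rates simultaneously.

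The two estimates to establish are $\|Nu\|_{X(\infty)} \lesssim \|u\|_{X(\infty)}^p$ and $\|Nu - Nv\|_{X(\infty)} \lesssim \|u-v\|_{X(\infty)}\big(\|u\|_{X(\infty)}^{p-1}+\|v\|_{X(\infty)}^{p-1}\big)$; together with $\|\epsilon u^{\mathrm{lin}}\|_{X(\infty)}\lesssim \epsilon\|(u_0,u_1,u_2)\|_{\mathcal{B}_\sigma}$ (the linear part, immediate from Section~\ref{Sec-linear}) these close the argument for $\epsilon$ small. For the nonlinear estimate I would split the Duhamel integral at $\tau = t/2$. On $[0,t/2]$ I use the $L^1\cap L^2 \to L^2$ (resp.\ $\to\dot H^{\frac43\sigma}$) linear estimate for $K(t-\tau,\cdot)$, which needs $\|f(u(\tau,\cdot))\|_{L^1\cap L^2} = \| |u(\tau,\cdot)|^p\|_{L^1} + \||u(\tau,\cdot)|^p\|_{L^2} \lesssim \|u(\tau,\cdot)\|_{L^p}^p + \|u(\tau,\cdot)\|_{L^{2p}}^{2p/2}$; by Gagliardo--Nirenberg interpolation between $L^2$ and $\dot H^{\frac43\sigma}$ these are bounded by the weighted norms, and the exponent restriction $p\leqslant \frac{3n}{(3n-8\sigma)_+}$ in \eqref{H1} is precisely what guarantees the relevant Gagliardo--Nirenberg exponents are admissible (i.e.\ $0\leqslant\theta\leqslant1$), using $\frac43\sigma < n$. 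On $[t/2,t]$, where $t-\tau$ is small, I instead use an $L^2\cap\dot H^{\frac43\sigma}\to L^2$ (resp.\ $\to \dot H^{\frac43\sigma}$) estimate for $K$ with no $L^1$ gain, again feeding in the Gagliardo--Nirenberg bounds for $\||u|^p\|_{L^2}$ and $\||u|^p\|_{\dot H^{\frac43\sigma}}$ (the latter requiring a fractional chain/Leibniz rule for $(-\Delta)^{\frac23\sigma}|u|^p$, valid for $p$ above the relevant threshold, which the hypothesis also covers). Carrying out the $\tau$-integration of the resulting powers of $(1+\tau)$ and $(1+t-\tau)$ and comparing with $g_0(t), g_1(t)$ is where the condition $p > p_{\mathrm{crit}}(n,\sigma)$ enters: it makes the integral over $[0,t/2]$ converge (or produces the logarithm exactly at $p=p_{\mathrm{crit}}$, which is why that case is excluded here and the borderline $n=\frac83\sigma$ produces a $\ln$). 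The difference estimate is handled identically using $\big||u|^p - |v|^p\big| \lesssim |u-v|(|u|^{p-1}+|v|^{p-1})$ together with Hölder.

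The main obstacle I expect is the bookkeeping of the linear estimates themselves: Section~\ref{Sec-linear} announces several thresholds ($\eta=1,\eta=3$) and the decay of the fundamental solution presumably changes form across $\eta=3$ and across the dimensional thresholds $n = \frac43\sigma$, $n=\frac83\sigma$, $n=\frac{10}{3}\sigma$ — so the nonlinear argument must be run with the correct linear $(L^1\cap L^2)$--$L^2$ and $(L^1\cap L^2)$--$\dot H^{\frac43\sigma}$ decay rates in each regime, and one must check that the same pair of weights $(g_0,g_1)$ works uniformly. In particular the logarithmic loss in the $L^2$ estimate when $n=\frac83\sigma$ and $\eta\in(3,\infty)$ has to be tracked through the Duhamel integral so that it appears (only) in the final $L^2$ bound and does not degrade the $\dot H^{\frac43\sigma}$ bound; this is a delicate but standard modification (absorbing $\ln$ factors using that $p>1$ strictly). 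A secondary technical point is justifying the fractional Leibniz rule for $\||u|^p\|_{\dot H^{\frac43\sigma}}$ when $\frac43\sigma$ is not an even integer, and checking its exponent constraints are implied by \eqref{H1}; I would cite the standard fractional Gagliardo--Nirenberg and composition estimates for this. Everything else — the continuity in time \eqref{Solution-u}, uniqueness, and the conclusion $T_\epsilon = \infty$ — follows formally once the contraction is set up on the full half-line.
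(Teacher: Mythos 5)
Your overall architecture -- Duhamel representation, the time-weighted space $\ml{C}([0,T],H^{\frac{4}{3}\sigma})$ with exactly the weights from the theorem statement, the splitting of the Duhamel integral at $\tau=t/2$, fractional Gagliardo--Nirenberg for $\||u|^p\|_{L^1\cap L^2}$, and the observation that $p>p_{\mathrm{crit}}(n,\sigma)$ makes the $[0,t/2]$ integral converge -- is the paper's proof. But there is one concrete step where your plan would break: on $[t/2,t]$ you propose to bound the $\dot H^{\frac{4}{3}\sigma}$ norm of the Duhamel term by an estimate of the form $\dot H^{\frac{4}{3}\sigma}\to\dot H^{\frac{4}{3}\sigma}$ applied to $\||u(\tau,\cdot)|^p\|_{\dot H^{\frac{4}{3}\sigma}}$, invoking a fractional chain/Leibniz rule for $(-\Delta)^{\frac{2}{3}\sigma}|u|^p$ and asserting that \eqref{H1} covers the threshold it requires. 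It does not. The chain rule for $|u|^p$ at regularity $s=\frac{4}{3}\sigma$ needs roughly $p>s$ (or $[s]<p$), whereas \eqref{H1} allows $p$ arbitrarily close to $p_{\mathrm{crit}}(n,\sigma)$, which can be far below $\frac{4}{3}\sigma$; for instance $\sigma=3$, $n=10$ gives $s=4$ and admits any $p\in(2,5]$, so $\||u|^{5/2}\|_{\dot H^{4}}$ is not controllable this way. This would force an extra lower bound on $p$ that is not in the theorem.

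The fix -- and what the paper actually does -- is to never differentiate the nonlinearity at all. The fundamental solution $E_2$ attached to the third data slot has symbol $\widehat K_2$ carrying a factor $|\xi|^{-\frac{4}{3}\sigma}$ (times decaying exponentials), so one has the uniform-in-time smoothing estimate $\|E_2(t,\cdot)\ast_{(x)}g\|_{\dot H^{\frac{4}{3}\sigma}}\lesssim\|g\|_{L^2}$ (see \eqref{C(1,3),L2-L2} and \eqref{D3}). Using this $L^2\to\dot H^{\frac{4}{3}\sigma}$ bound on $[t/2,t]$, only $\||u(\tau,\cdot)|^p\|_{L^2}$ is needed there, which Gagliardo--Nirenberg handles under \eqref{H1} alone; no fractional Leibniz rule enters anywhere. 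Two smaller points to tighten: the admissibility $\beta_1\geqslant 0$ in the $L^1$ estimate of $|u|^p$ forces $p\geqslant 2$, which is not an extra hypothesis only because $p_{\mathrm{crit}}(n,\sigma)\geqslant 2$ precisely when $n\leqslant\frac{10}{3}\sigma$ (this is where that dimensional restriction is used); and the convergence of the $[t/2,t]$ piece of the $L^2$ estimate uses $\frac{3n-8\sigma}{4\sigma}<1$, i.e.\ $n<4\sigma$, which again is supplied by $n\leqslant\frac{10}{3}\sigma$. With the smoothing estimate replacing the chain rule, the rest of your argument (contraction, difference estimate via $\bigl||u|^p-|\bar u|^p\bigr|\lesssim|u-\bar u|(|u|^{p-1}+|\bar u|^{p-1})$, and the logarithmic bookkeeping for $n=\frac{8}{3}\sigma$, $\eta\in(3,\infty)$) goes through as you describe.
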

\begin{remark}
The restriction $n\leqslant\frac{10}{3}\sigma$ in Theorem \ref{Thm-GESDS} contributes to the lower bound of the exponent $p$ such that $p>p_{\mathrm{crit}}(n,\sigma)$ due to our purpose of the critical exponent. Note that \eqref{H1} is non-empty. Actually, we also can prove global (in time) existence result with other ranges of the parameters $\sigma$, $n$, $p$, e.g. the global (in time) Sobolev solution \eqref{Solution-u} uniquely exists when
\begin{align*}
	p_{\mathrm{crit}}(n,\sigma)<p\leqslant\frac{3n}{(3n-8\sigma)_+}\ \ \mbox{and}\ \ p\geqslant 2,
\end{align*}
in the case $\frac{4}{3}\sigma<n<4\sigma$ if $\eta\in(1,3]$, in which  $p\geqslant 2$ originates from the application of the fractional Gagliardo-Nirenberg inequality (see Subsection \ref{Subsec-GESDS}). Another example for $4\sigma\leqslant n\leqslant\frac{16}{3}\sigma$ will be stated in Remark \ref{Rem-Scrop-02}. However, it is beyond the scope of our manuscript.
\end{remark}
%\begin{remark}
%Considering the case when $\frac{4}{3}\sigma<n\leqslant \frac{8}{3}\sigma$ if $\eta\in(1,3]$, the condition of the exponent $p$ for the global (in time) existence will turn out to be $p>p_{\mathrm{crit}}(n,\sigma)$, namely, the restriction on the upper bound will disappear.
%\end{remark}
%\begin{remark}
%The reason of taking different restriction on dimensions $n$ for the cases $\eta\in(1,3]$ and $\eta\in(3,\infty)$ comes from the asymptotic profiles of solution to the linearized model \eqref{Eq-Linear-Third-PDE}, in which we refer to Remark \ref{Rem-eta=3}.
%\end{remark}
\begin{remark}
	By lengthy but straightforward derivations, under the same condition of the exponent $p$ as \eqref{H1}, the regularity of global (in time) solution \eqref{Solution-u} can be improved by
	\begin{align*}
		u\in\ml{C}\big([0,\infty),H^{\frac{4}{3}\sigma}\big)\cap \ml{C}^1\big([0,\infty),H^{\frac{2}{3}\sigma}\big)\cap\ml{C}^2\big([0,\infty),L^2\big)
	\end{align*}
	with some estimates for time-derivatives of solutions. The further explanation will be shown later in Remark \ref{Rem-Further_regularity}.
\end{remark}
\begin{remark}
Some derived growth/decay rates in Theorem \ref{Thm-GESDS} coincide with those of the corresponding linearized Cauchy problem  \eqref{Eq-Linear-Third-PDE} when $\eta\in(1,\infty)$ in Propositions \ref{Prop-(1,3)}-\ref{Prop-(3)}, which verifies the phenomenon of no loss of growth/decay. Note that the solution itself decays polynomially only when $\frac{8}{3}\sigma<n<\frac{10}{3}\sigma$  in Theorem \ref{Thm-GESDS}. Particularly, Proposition \ref{Prop-sharpness-linear} rigorously demonstrates sharpness of the $(L^2\cap L^1)-L^2$ type estimates for the linearized problem \eqref{Eq-Linear-Third-PDE} so that we conjecture the sharpness of some derived estimates in Theorem \ref{Thm-GESDS}. But, there is a logarithmic loss technically for $n=\frac{8}{3}\sigma$ when $\eta\in(3,\infty)$.
\end{remark}

We now turn our mind to a blow-up result in the subcritical case $p<p_{\mathrm{crit}}(n,\sigma)$ as well as the critical case $p=p_{\mathrm{crit}}(n,\sigma)$ under a sign condition of the third initial data.

\begin{theorem}\label{Thm-Blow-up}
 Let us consider the semilinear Cauchy problem \eqref{Eq-Third-PDE} with $\eta\in(1,\infty)$, $\sigma\in(0,\infty)$, and $u_0,u_1,u_2\in L^1$ such that
\begin{align}\label{Sign_Assumption}
\int_{\mb{R}^n}u_2(x)\mathrm{d}x>0.
\end{align}
Let the exponent $p$ fulfill $1<p\leqslant p_{\mathrm{crit}}(n,\sigma)$. Then, according to the definition \eqref{Weak_Solution}, every non-trivial local (in time) weak solution for the semilinear model \eqref{Eq-Third-PDE}  blows up in finite time.
\end{theorem}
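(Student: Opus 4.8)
The plan is to prove Theorem~\ref{Thm-Blow-up} by a rescaled test function argument of Mitidieri--Pohozaev / Zhang type, adapted to the third order time derivative and to the three nonlocal operators $\ml A^{\alpha}=(-\Delta)^{\alpha\sigma}$, $\alpha\in\{1/3,2/3,1\}$, with the diffusive scaling $T\simeq R^{2\sigma/3}$, $|x|\simeq R$. This scaling is natural: under $t\simeq R^{2\sigma/3}$ the four linear terms $\partial_t^3u$, $\ml A u$, $\ml A^{1/3}u_{tt}$, $\ml A^{2/3}u_t$ of \eqref{Eq-Third-PDE} all scale like $R^{-2\sigma}$, and in the low-frequency regime governing the critical exponent the equation behaves like $\partial_t\sim-(-\Delta)^{\sigma/3}$. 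Fix cutoffs $\eta,\varphi\in\ml C^{\infty}$ with $\eta\equiv1$ near $0$, $\operatorname{supp}\eta\subset[0,1]$, $\varphi(0)=1$, $0\le\eta,\varphi\le1$; set $\eta_R(t):=\eta(R^{-2\sigma/3}t)$, $\varphi_R(x):=\varphi(x/R)$, and for $\kappa$ a large multiple of $p'$ (it suffices that $\kappa>3p'$) put $\Phi_R(t,x):=\eta_R(t)^{\kappa}\varphi_R(x)^{\kappa}$, so that $\partial_t\Phi_R(0,\cdot)=\partial_t^2\Phi_R(0,\cdot)=0$. Testing \eqref{Eq-Third-PDE} against $\Phi_R$ in the sense of \eqref{Weak_Solution}, moving the time derivatives onto $\Phi_R$ (three, two and one integrations by parts for the $u_{ttt}$, $\ml A^{1/3}u_{tt}$ and $\ml A^{2/3}u_t$ terms, respectively) and using self-adjointness of $\ml A^{\alpha}$ in $x$, the boundary terms at $t=T<T_\epsilon$ vanish by the temporal support while the $u_0,u_1$ contributions from the triple integration by parts of $u_{ttt}$ are killed by $\partial_t\Phi_R(0,\cdot)=\partial_t^2\Phi_R(0,\cdot)=0$; one is left with the identity
\begin{align*}
&\int_0^T\!\!\int_{\mb R^n}|u|^p\Phi_R+\epsilon\!\int_{\mb R^n}\!u_2\,\varphi_R^{\kappa}+\epsilon\eta\!\int_{\mb R^n}\!\bigl(u_1\,\ml A^{1/3}\varphi_R^{\kappa}+u_0\,\ml A^{2/3}\varphi_R^{\kappa}\bigr)\\
&\qquad=-\!\int_0^T\!\!\int_{\mb R^n}\!u\,\partial_t^3\Phi_R+\!\int_0^T\!\!\int_{\mb R^n}\!u\,\ml A\Phi_R+\eta\!\int_0^T\!\!\int_{\mb R^n}\!u\,\ml A^{1/3}\partial_t^2\Phi_R-\eta\!\int_0^T\!\!\int_{\mb R^n}\!u\,\ml A^{2/3}\partial_t\Phi_R.
\end{align*}

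Next I would exploit the sign condition \eqref{Sign_Assumption}. By dominated convergence $\int_{\mb R^n}u_2\varphi_R^{\kappa}\to\int_{\mb R^n}u_2>0$, while the scaling identity $\ml A^{\alpha}\varphi_R^{\kappa}=R^{-2\alpha\sigma}\bigl[(-\Delta)^{\alpha\sigma}\varphi^{\kappa}\bigr](\cdot/R)$ gives $\bigl|\epsilon\eta\int_{\mb R^n}(u_1\ml A^{1/3}\varphi_R^{\kappa}+u_0\ml A^{2/3}\varphi_R^{\kappa})\bigr|\lesssim\epsilon\bigl(R^{-2\sigma/3}\|u_1\|_{L^1}+R^{-4\sigma/3}\|u_0\|_{L^1}\bigr)\to0$; hence for $R$ large the left-hand side of the identity is $\ge\int_0^T\!\!\int_{\mb R^n}|u|^p\Phi_R+\epsilon\delta_0$ with $\delta_0:=\tfrac12\int_{\mb R^n}u_2>0$. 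On the right-hand side I apply H\"older with the weight $\Phi_R$: each of the four terms is $\le\bigl(\int_0^T\!\!\int_{\mb R^n}|u|^p\Phi_R\bigr)^{1/p}\bigl(\int_0^T\!\!\int_{\mb R^n}|\ml A^{\alpha}\partial_t^{\,j}\Phi_R|^{p'}\Phi_R^{1-p'}\bigr)^{1/p'}$, where $(\alpha,j)$ runs over $(0,3),(1,0),(1/3,2),(2/3,1)$ and in each case $j+3\alpha=3$; combining $|\partial_t^{\,j}\eta_R^{\kappa}|\lesssim R^{-2j\sigma/3}\eta_R^{\kappa-j}$ with the bound $\int_{\mb R^n}\bigl|(-\Delta)^{\alpha\sigma}\varphi^{\kappa}\bigr|^{p'}\varphi^{\kappa(1-p')}\,\mathrm dx<\infty$ (the test function $\varphi$ and the power $\kappa$ being chosen precisely to secure this) one gets that all four weighted factors are $\lesssim R^{(n+\frac{2\sigma}{3}-2\sigma p')/p'}$. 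A Young inequality then absorbs $\int_0^T\!\!\int_{\mb R^n}|u|^p\Phi_R$ into the left-hand side and produces
\begin{align*}
\epsilon\,\delta_0\ \lesssim\ R^{\,n+\frac{2\sigma}{3}-2\sigma p'}\ =\ R^{-b},\qquad b:=2\sigma p'-n-\tfrac{2\sigma}{3}=\tfrac13\bigl(6\sigma p'-(3n+2\sigma)\bigr),
\end{align*}
for every $R>0$ with $R^{2\sigma/3}<T_\epsilon$.

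Since $b\ge0\iff p\le p_{\mathrm{crit}}(n,\sigma)$, with $b>0$ strictly when $p<p_{\mathrm{crit}}(n,\sigma)$ (one checks $6\sigma p'=3n+2\sigma$ exactly at $p=p_{\mathrm{crit}}(n,\sigma)$), the subcritical case is immediate: were the solution global ($T_\epsilon=\infty$), the bound $\epsilon\delta_0\lesssim R^{-b}$ would hold for every $R>0$, and letting $R\to\infty$ would force $\epsilon\delta_0\le0$, contradicting $\delta_0>0$; hence $T_\epsilon<\infty$, i.e.\ blow-up occurs (and optimizing over $R\uparrow T_\epsilon^{3/(2\sigma)}$ even yields the sharp upper bound $T_\epsilon\lesssim\epsilon^{-2\sigma/(3b)}=\epsilon^{-2\sigma/(6\sigma p'-(3n+2\sigma))}$). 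In the critical case $p=p_{\mathrm{crit}}(n,\sigma)$ one has $b=0$ and the argument above only delivers $\int_0^T\!\!\int_{\mb R^n}|u|^p\Phi_R\lesssim1$ uniformly in $R$, which is not yet a contradiction. To close this case I would follow the slicing technique of Ikeda--Sobajima and Ebert--Girardi--Reissig for the classical damped wave and replace $\Phi_R$ by the logarithmically refined test function \eqref{Su01} (incorporating the critical scaling and an $\ml A^{\alpha}$-adapted factor), which sharpens the inequality to, schematically, $\epsilon^{\,p-1}\log R\lesssim1$ for all $R$ with $R^{2\sigma/3}<T_\epsilon$ --- again incompatible with $T_\epsilon=\infty$, and matching the sharp critical lifespan $T_\epsilon\lesssim\exp(C\epsilon^{-(p-1)})$.

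I expect the principal obstacle to be the nonlocality of $\ml A^{\alpha}=(-\Delta)^{\alpha\sigma}$: unlike in the classical damped wave equation a cutoff and its fractional powers share no common support, so the weighted bound $\int_{\mb R^n}\bigl|(-\Delta)^{\alpha\sigma}\varphi^{\kappa}\bigr|^{p'}\varphi^{\kappa(1-p')}\,\mathrm dx<\infty$ --- required \emph{simultaneously} for $\alpha\in\{1/3,2/3,1\}$, possibly with $\alpha\sigma\ge1$ --- must be set up through a careful construction of the test function $\varphi$ together with sharp pointwise decay estimates for the fractional Laplacian of rescaled cutoffs; it is this bound, via the scaling $T\simeq R^{2\sigma/3}$, that fixes the exponent $b$ and hence pins down $p_{\mathrm{crit}}(n,\sigma)$. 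The logarithmic refinement \eqref{Su01} needed in the critical case is the other delicate point. By comparison the third order time derivative is harmless: it is handled by the extra integrations by parts and by taking $\kappa>3p'$ so that the temporal cutoff dominates all of its derivatives.
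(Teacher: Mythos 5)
Your subcritical argument is essentially the paper's: the same scaling $t\simeq R^{2\sigma/3}$, $|x|\simeq R$, the same weak formulation \eqref{Weak_Solution}, the same H\"older--Young balance producing the exponent $\frac{3n+2\sigma}{3}-2\sigma p'$, and the same contradiction with \eqref{Sign_Assumption} as $R\to\infty$. Two caveats. First, the spatial weight: as written your $\varphi$ looks like a cutoff, and for a compactly supported $\varphi$ the quantity $\int|(-\Delta)^{\alpha\sigma}\varphi^{\kappa}|^{p'}\varphi^{\kappa(1-p')}\,\mathrm{d}x$ is genuinely infinite when $\alpha\sigma\notin\mb{N}$, because $(-\Delta)^{\alpha\sigma}\varphi^{\kappa}$ only decays polynomially outside $\mathrm{supp}\,\varphi$ while $\varphi^{\kappa(1-p')}$ blows up there. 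You flag this as the principal obstacle but do not resolve it; the paper's resolution is to take $\varphi(x)=\langle x\rangle^{-n-2s_{\sigma}}$ (not compactly supported) and to invoke Lemmas \ref{lemma2.1} and \ref{lemma2.2}, which give $|(-\Delta)^{\alpha\sigma}\langle x\rangle^{-q}|\lesssim\langle x\rangle^{-q_{\alpha\sigma}}$ with $q_{\alpha\sigma}\geqslant q$, so the weighted ratio is integrable for every $\alpha\in\{1/3,2/3,1\}$ and every $\sigma\in(0,\infty)$. This step is not cosmetic: it is exactly what makes the argument work for fractional $\sigma$, and your proof is incomplete without it.

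The genuine gap is the critical case $p=p_{\mathrm{crit}}(n,\sigma)$. Your plan is to switch to the logarithmically refined test function \eqref{Su01} in the style of Ikeda--Sobajima. But in this paper that construction requires $\sigma\in3\mb{N}$ (so that $(-\Delta)^{q\sigma}$ is a local operator and the chain rule applies to $\chi((t+|x|^{2\sigma/3})/R)$) and compactly supported smooth data; that is why Theorem \ref{Thm-Lifespan-upper} carries exactly those extra hypotheses. Theorem \ref{Thm-Blow-up}, however, is stated for all $\sigma\in(0,\infty)$ and mere $L^1$ data, so your route does not prove the statement as given. The paper closes the critical case with a different and more elementary device: the uniform bound $I_R\lesssim1$ first yields $u\in L^p([0,\infty)\times\mb{R}^n)$; since $\partial_t^k\zeta_R$ is supported in $\{t\geqslant\frac{1}{2}R^{2\sigma/3}\}$, all terms containing a time derivative of the test function vanish as $R\to\infty$; the single surviving term $\int\!\!\int u\,\ml{A}\varphi_R\,\zeta_R$ is then made small by introducing a second, independent dilation parameter $K$ in the spatial scale, using that its H\"older bound is $O(K^{n-2\sigma p'})$ with $n-2\sigma p'=-\frac{2}{3}\sigma<0$ precisely at the critical exponent. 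Taking $K\gg1$ contradicts \eqref{Sign_Assumption}. You would need to either adopt this two-parameter argument or supply a version of the slicing method valid for all $\sigma\in(0,\infty)$ and $L^1$ data, which the paper itself leaves open.
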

\begin{remark}
The blow-up result still holds when
\begin{align*}
\int_{\mb{R}^n}\left(\eta \ml{A}^{\frac{2}{3}}u_0(x)+\eta\ml{A}^{\frac{1}{3}} u_1(x)+u_2(x)\right)\mathrm{d}x>0
\end{align*}
instead of the single sign condition \eqref{Sign_Assumption}, whereas we suppose that $\ml{A}^{\frac{2}{3}}u_0,\ml{A}^{\frac{1}{3}} u_1,u_2\in L^1$ with $\ml{A}=(-\Delta)^{\sigma}$. Namely, another sign condition of getting the blow-up result with $1<p\leqslant p_{\mathrm{crit}}(n,\sigma)$ even for the failure of \eqref{Sign_Assumption}. The proof is just a slight modification to the left-hand side of \eqref{Blow-up-01}.
\end{remark}
\begin{remark}
Recalling the notation \eqref{Crit}, the condition $1<p\leqslant p_{\mathrm{crit}}(n,\sigma)$ in Theorem \ref{Thm-Blow-up} includes the special case that every non-trivial weak solution blows up with any $p>1$ when $n\leqslant\frac{4}{3}\sigma$ and $\sigma\in(0,\infty)$. This is the reason for the general restriction $n>\frac{4}{3}\sigma$ in the global (in time) existence result from Theorem \ref{Thm-GESDS}.
\end{remark}

To end this subsection, we analyze the new critical exponent to the Cauchy problem for semilinear third-order (in time) evolution equations \eqref{Eq-Third-PDE}. According to Theorems \ref{Thm-GESDS} and \ref{Thm-Blow-up}, we immediately conclude the critical exponent $p=p_{\mathrm{crit}}(n,\sigma)$ given by \eqref{Crit} when $n\leqslant\frac{10}{3}\sigma$ and $\eta\in(1,\infty)$. The critical exponent for the higher dimensional case, i.e. $n> \frac{10}{3}\sigma$, may be obtained by using more general $L^r-L^q$ estimates, $1\leqslant r\leqslant q\leqslant \infty$,  but this task is beyond the scope of this paper.

\begin{exam}
Let us consider the semilinear problem \eqref{Eq-Third-PDE} with $\sigma=3$ and $\eta\in(1,\infty)$, namely,
\begin{align}\label{Example}
	\begin{cases}
		u_{ttt}-\Delta^3 u-\eta\Delta u_{tt}+\eta\Delta^2 u_t=|u|^p,&x\in\mb{R}^n,\ t>0,\\
		u(0,x)=\epsilon u_0(x),\ u_t(0,x)=\epsilon u_1(x),\ u_{tt}(0,x)=\epsilon u_2(x),&x\in\mb{R}^n,
	\end{cases}
\end{align}
in which the equation can be understood by the heat operator acting on the semilinear structurally damped plate model \cite{Pham-Kainane-Reissig=2015,D'Abbicco-Ebert=2017} as follows:
\begin{align*}
(\partial_t-\Delta)\big(u_{tt}+\Delta^2 u-(\eta-1)\Delta u_{t}\big)=|u|^p.
\end{align*}
Combining with Theorems \ref{Thm-GESDS} and \ref{Thm-Blow-up}, we assume the small data with $0<\epsilon\ll 1$ satisfying
\begin{align}\label{H2}
	(u_0,u_1,u_2)\in (H^{4}\cap L^1)\times (H^{2}\cap L^1)\times (L^2\cap L^1) \ \ \mbox{and}\ \ \int_{\mb{R}^n}u_2(x)\mathrm{d}x>0.
\end{align}
Then, the critical exponent for \eqref{Example} is given by
\begin{align*}
	p=p_{\mathrm{crit}}(n,3)=1+\frac{6}{(n-4)_+}
\end{align*}
when $n=1,\dots,10$ for all $\eta\in(1,\infty)$.
\end{exam}

\subsection{Results and discussions for sharp lifespan estimates}
$\ \ \ \ $As we explained in Theorem \ref{Thm-Blow-up}, under the vital condition $1<p\leqslant p_{\mathrm{crit}}(n,\sigma)$, every non-trivial
local (in time) solution blows up in finite time, which motivates us to provide more detailed information of the lifespan $T_{\epsilon}$ defined in \eqref{Lifespan_Defn}.

Before investigating lower bound estimates for the lifespan $T_{\epsilon}$, let us define mild solutions to the semilinear Cauchy problem \eqref{Eq-Third-PDE} with $T>0$ for $u\in\ml{C}([0,T],H^{\frac{4}{3}\sigma})$ as solutions of the following operator equality:
\begin{align}\label{Mild-Solution}
u(t,x)=\epsilon\sum\limits_{j=0,1,2}E_j(t,x)\ast_{(x)}u_j(x)+\int_0^tE_2(t-\tau,x)\ast_{(x)}|u(\tau,x)|^p\mathrm{d}\tau
\end{align}
for $t\in[0,T]$. In the above equation, $E_j(t,x)$ with $j=0,1,2$ are the fundamental solutions to the corresponding linearized Cauchy problem \eqref{Eq-Linear-Third-PDE} with initial data $v_j(x)=\delta_0$ and $v_k(x)=0$ carrying $k\neq j$. Here, $\delta_0$ is the Dirac distribution at $x=0$. Let us introduce by $T_{\epsilon,\mathrm{m}}$ the lifespan of a mild solution $u=u(t,x)$. Then, lower bound estimates of $T_{\epsilon,\mathrm{m}}$ are given by the next theorem.
\begin{theorem}\label{Thm-Lifespan-lower}
Let $\frac{4}{3}
\sigma<n\leqslant\frac{10}{3}\sigma$ if $\eta\in(1,3)\cup(3,\infty)$, and $n\leqslant\frac{10}{3}\sigma$ if $\eta=3$ with $\sigma\in(0,\infty)$. Let us consider the semilinear Cauchy problem \eqref{Eq-Third-PDE} with $\eta\in(1,\infty)$ and  $(u_0,u_1,u_2)\in\ml{B}_{\sigma}$. Let the exponent $p$ fulfill $2\leqslant p\leqslant p_{\mathrm{crit}}(n,\sigma)$. Then, there exists a constant $\epsilon_1$ such that for any $\epsilon\in(0,\epsilon_1]$, the lifespan $T_{\epsilon,\mathrm{m}}$ of mild solutions from the definition \eqref{Mild-Solution} satisfies
\begin{align}\label{Lifespan+Upper+Bound}
T_{\epsilon,\mathrm{m}}\geqslant\begin{cases}
C \epsilon^{-\frac{2\sigma}{6\sigma p'-(3n+2\sigma)}}&\mbox{if}\ \  p<p_{\mathrm{crit}}(n,\sigma),\\
 \exp(C\epsilon^{-(p-1)})&\mbox{if}\ \ p=p_{\mathrm{crit}}(n,\sigma),
\end{cases}
\end{align}
where $C$ is a positive constant independent of $\epsilon$ and depends  on $p,n,\sigma,\eta$ as well as $\|(u_0,u_1,u_2)\|_{\ml{B}_{\sigma}}$.
\end{theorem}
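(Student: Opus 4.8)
The plan is to prove the lower bound on $T_{\epsilon,\mathrm{m}}$ via a contraction mapping argument on a time-weighted function space, tracking carefully how the weights accumulate over the nonlinear Duhamel term so as to read off the blow-up threshold explicitly in $\epsilon$. First I would fix $T\in(0,\infty]$ and define the solution space
\begin{align*}
X(T):=\ml{C}\big([0,T],H^{\frac{4}{3}\sigma}\big)
\end{align*}
equipped with the time-weighted norm
\begin{align*}
\|u\|_{X(T)}:=\sup_{t\in[0,T]}\Big((1+t)^{\gamma_0(t)}\|u(t,\cdot)\|_{L^2}+(1+t)^{\frac{3n}{4\sigma}}\|u(t,\cdot)\|_{\dot H^{\frac{4}{3}\sigma}}\Big),
\end{align*}
where the weight $\gamma_0(t)$ is the one furnished by the $(L^2\cap L^1)-L^2$ linear estimates from Propositions \ref{Prop-(1,3)}--\ref{Prop-(3)} (namely $\gamma_0=\frac{3n-8\sigma}{4\sigma}$ in the genuinely decaying range, and a logarithmic weight in the borderline cases). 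Writing the mild solution \eqref{Mild-Solution} as $u=N[u]$ with
\begin{align*}
N[u](t,x):=\epsilon\sum_{j=0,1,2}E_j(t,x)\ast_{(x)}u_j(x)+\int_0^t E_2(t-\tau,x)\ast_{(x)}|u(\tau,x)|^p\,\mathrm{d}\tau,
\end{align*}
the goal is to show $N$ maps a ball of $X(T)$ into itself and is a contraction there, provided $T$ is not too large relative to $\epsilon$.

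The estimate of the linear part is immediate: by the linear decay estimates one has $\|\epsilon\sum_j E_j\ast u_j\|_{X(T)}\lesssim\epsilon\|(u_0,u_1,u_2)\|_{\ml{B}_{\sigma}}$ uniformly in $T$. For the Duhamel term one splits the time integral at $\tau=t/2$, applies the $(L^2\cap L^1)-L^2$ and $(L^2\cap L^1)-\dot H^{\frac{4}{3}\sigma}$ estimates to $E_2(t-\tau)\ast|u(\tau)|^p$, and controls $\||u(\tau)|^p\|_{L^1\cap L^2}=\|u(\tau)\|_{L^p}^p+\|u(\tau)\|_{L^{2p}}^p$ by the fractional Gagliardo--Nirenberg inequality interpolating between $L^2$ and $\dot H^{\frac{4}{3}\sigma}$ — this is exactly where the hypothesis $2\le p\le p_{\mathrm{crit}}(n,\sigma)\le\frac{3n}{(3n-8\sigma)_+}$ is used to keep the interpolation exponents admissible. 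Inserting the time-weighted bounds $\|u(\tau)\|_{L^2}\lesssim(1+\tau)^{-\gamma_0(\tau)}\|u\|_{X(T)}$, $\|u(\tau)\|_{\dot H^{\frac{4}{3}\sigma}}\lesssim(1+\tau)^{-\frac{3n}{4\sigma}}\|u\|_{X(T)}$ produces, after collecting powers, an integral of the shape
\begin{align*}
\int_0^t(1+t-\tau)^{-\gamma_0(t)}(1+\tau)^{-\beta(p)}\,\mathrm{d}\tau\,\|u\|_{X(T)}^p,
\end{align*}
with $\beta(p)=\frac{6\sigma p'-(3n+2\sigma)}{2\sigma p'}\cdot$(something) — the point being that $\beta(p)>1$ exactly when $p>p_{\mathrm{crit}}(n,\sigma)$, $\beta(p)=1$ when $p=p_{\mathrm{crit}}$, and $\beta(p)<1$ when $p<p_{\mathrm{crit}}$. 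Consequently the $\tau$-integral is bounded by a constant (not uniformly in $T$) of size $\lesssim T^{1-\beta(p)}$ in the subcritical range and $\lesssim\ln(\mathrm e+T)$ in the critical range.

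Thus one arrives at the closed inequality
\begin{align*}
\|N[u]\|_{X(T)}\leqslant C_0\epsilon\|(u_0,u_1,u_2)\|_{\ml{B}_{\sigma}}+C_1\,\Phi(T)\,\|u\|_{X(T)}^p,
\end{align*}
with $\Phi(T)=T^{\,(1-\beta(p))_+}$ if $p<p_{\mathrm{crit}}(n,\sigma)$ and $\Phi(T)=\ln(\mathrm e+T)$ if $p=p_{\mathrm{crit}}(n,\sigma)$, together with the analogous Lipschitz estimate
\begin{align*}
\|N[u]-N[v]\|_{X(T)}\leqslant C_1\,\Phi(T)\big(\|u\|_{X(T)}^{p-1}+\|v\|_{X(T)}^{p-1}\big)\|u-v\|_{X(T)},
\end{align*}
whose proof uses $\big||u|^p-|v|^p\big|\lesssim|u-v|(|u|^{p-1}+|v|^{p-1})$ and Hölder. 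A standard fixed-point argument on the ball of radius $R:=2C_0\epsilon\|(u_0,u_1,u_2)\|_{\ml{B}_{\sigma}}$ then goes through as soon as $C_1\Phi(T)R^{p-1}\leqslant\tfrac12$, i.e.
\begin{align*}
\Phi(T)\leqslant \frac{1}{2C_1 R^{p-1}}=c\,\epsilon^{-(p-1)}.
\end{align*}
Solving for $T$: in the subcritical case this reads $T^{(1-\beta(p))}\lesssim\epsilon^{-(p-1)}$, hence $T_{\epsilon,\mathrm m}\geqslant C\epsilon^{-\frac{p-1}{1-\beta(p)}}$, and a bookkeeping check identifies the exponent $\frac{p-1}{1-\beta(p)}$ with $\frac{2\sigma}{6\sigma p'-(3n+2\sigma)}$; in the critical case $\ln(\mathrm e+T)\lesssim\epsilon^{-(p-1)}$ gives $T_{\epsilon,\mathrm m}\geqslant\exp(C\epsilon^{-(p-1)})$. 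Choosing $\epsilon_1$ so that $R\le 1$ (to absorb $R^{p-1}\ge R^{p}$-type losses in the Gagliardo--Nirenberg step) completes the argument.

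I expect the main obstacle to be the precise power-counting in the Duhamel estimate — matching the accumulated time-weights so that the critical threshold in the $\tau$-integrand lands exactly at the exponent $p_{\mathrm{crit}}(n,\sigma)$, and verifying that the resulting lifespan exponent algebraically simplifies to $\frac{2\sigma}{6\sigma p'-(3n+2\sigma)}$. The borderline dimensions ($n=\frac{8}{3}\sigma$ with $\eta\in(3,\infty)$, and the endpoints of the Gagliardo--Nirenberg range) will need the logarithmic weight in $\gamma_0$ and a slightly more delicate splitting of $\int_0^t=\int_0^{t/2}+\int_{t/2}^t$, but these are technical rather than conceptual. One should also double-check that the fixed-point solution indeed has the claimed regularity $u\in\ml{C}([0,T],H^{\frac{4}{3}\sigma})$ and that it coincides with the mild solution of \eqref{Mild-Solution}, which is automatic from the construction.
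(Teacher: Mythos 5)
Your proposal is correct and follows essentially the same route as the paper: both hinge on the nonlinear inequality $\|u\|_{X(T)}\leqslant C_0\epsilon+C_1 L(T)\|u\|_{X(T)}^p$ with the time-growing factor $L(T)=(1+T)^{1+\frac{3n}{2\sigma}-\frac{3n-4\sigma}{2\sigma}p}$ (subcritical) or $\ln(\mathrm{e}+T)$ (critical), obtained from the sharp linear estimates, the Duhamel splitting at $t/2$, and the fractional Gagliardo--Nirenberg inequality under $2\leqslant p\leqslant p_{\mathrm{crit}}(n,\sigma)$, and your exponent bookkeeping $\frac{p-1}{1-\beta(p)}=\frac{2\sigma}{6\sigma p'-(3n+2\sigma)}$ checks out. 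The only (immaterial) difference is that the paper closes the argument with a continuity/bootstrap contradiction on $T^*:=\sup\{T:\|u\|_{X(T)}\leqslant M\epsilon\}$ rather than your $T$-dependent contraction on a ball of radius $\simeq\epsilon$, and it handles the borderline case $n=\frac{8}{3}\sigma$, $\eta\in(3,\infty)$ with the logarithmic weight exactly as you indicate.
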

\begin{remark}
The restriction $n\leqslant\frac{10}{3}\sigma$ contributes to the non-empty set $2\leqslant p\leqslant p_{\mathrm{crit}}(n,\sigma)$ because the lifespan $T_{\epsilon}=\infty$ when $p>p_{\mathrm{crit}}(n,\sigma)$ proposed in Theorem \ref{Thm-GESDS}. Here, the lower bound $p\geqslant 2$, again, comes from the application of the fractional Gagliardo-Nirenberg inequality, which can be improved by using more general $L^r-L^q$ estimates carrying $1\leqslant r\leqslant q\leqslant \infty$.
\end{remark}

To guarantee sharpness of the derived lifespan estimates \eqref{Lifespan+Upper+Bound}, we have to estimate it from the above side. Therefore, we turn to upper bound estimates for the lifespan $T_{\epsilon,\mathrm{w}}$ of a weak solution in the subsequent theorem.
\begin{theorem}\label{Thm-Lifespan-upper}
	Let us consider the semilinear Cauchy problem  \eqref{Eq-Third-PDE} with $\eta\in(1,\infty)$ and initial data satisfying the sign condition \eqref{Sign_Assumption}.
	\begin{itemize}
		\item In the subcritical case $1<p<p_{\mathrm{crit}}(n,\sigma)$,  assuming $u_0,u_1,u_2\in L^1$ and $\sigma\in(0,\infty)$, the lifespan $T_{\epsilon,\mathrm{w}}$ of weak solutions  satisfies
		\begin{align}\label{Upper-1}
		T_{\epsilon,\mathrm{w}}\leqslant C \epsilon^{-\frac{2\sigma}{6\sigma p'-(3n+2\sigma)}}.
		\end{align}
		\item In the critical case $p=p_{\mathrm{crit}}(n,\sigma)$, assuming $u_0,u_1,u_2\in \ml{C}_0^{\infty}$ and $\sigma\in3\mb{N}$, the lifespan  $T_{\epsilon,\mathrm{w}}$ of weak solutions satisfies
		\begin{align}\label{Upper-2}
		T_{\epsilon,\mathrm{w}}\leqslant \exp(C\epsilon^{-(p-1)}).
		\end{align}
	\end{itemize}
Here, $C$ is a positive constant independent of $\epsilon$ and depends on $p,n,\sigma,\eta$ as well as $\|u_2\|_{L^1}$.
\end{theorem}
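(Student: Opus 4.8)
\noindent The plan is to establish both upper bounds by the rescaled test function method (in the spirit of Mitidieri--Pohozaev and Zhang) applied to the weak formulation \eqref{Weak_Solution}, with a space--time test function whose self-similar scaling is dictated by the low-frequency analysis of the linearized problem \eqref{Eq-Linear-Third-PDE} carried out in Section \ref{Sec-linear}: the dominant characteristic root there behaves like $\lambda(\xi)\sim-c\,|\xi|^{2\sigma/3}$, hence the natural scaling is the parabolic one $|x|\sim t^{3/(2\sigma)}$. Concretely I would take the test function \eqref{Su01} in the form
\begin{align*}
\psi_R(t,x)=\big(\phi(t/R)\big)^{\ell}\,\Phi\big(x/R^{3/(2\sigma)}\big),
\end{align*}
with $\ell=\ell(p)$ a sufficiently large power, $\phi\in\ml{C}^\infty$ equal to $1$ on $[0,1]$ and vanishing on $[2,\infty)$ (so that $\partial_t\psi_R(0,\cdot)=\partial_t^2\psi_R(0,\cdot)=0$), and $\Phi\geqslant0$ a radial profile with $\Phi(0)=1$ and suitable polynomial decay. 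Testing \eqref{Eq-Third-PDE} against $\psi_R$, integrating by parts three times in $t$ and transferring the fractional Laplacians onto $\psi_R$, the only surviving initial-data contribution is $\epsilon\int_{\mb{R}^n}u_2\,\psi_R(0,\cdot)\,\mathrm{d}x+\eta\epsilon\int_{\mb{R}^n}u_1\,\ml{A}^{1/3}\psi_R(0,\cdot)\,\mathrm{d}x+\eta\epsilon\int_{\mb{R}^n}u_0\,\ml{A}^{2/3}\psi_R(0,\cdot)\,\mathrm{d}x$; since $\ml{A}^{1/3}\psi_R(0,\cdot)=O(R^{-1})$ and $\ml{A}^{2/3}\psi_R(0,\cdot)=O(R^{-2})$ in $L^\infty$, while $\int_{\mb{R}^n}u_2\,\psi_R(0,\cdot)\,\mathrm{d}x\to\int_{\mb{R}^n}u_2\,\mathrm{d}x>0$ by \eqref{Sign_Assumption} and dominated convergence, for every $R\geqslant R_0$ this contribution is at least $c_0\epsilon$ with $c_0:=\tfrac12\int_{\mb{R}^n}u_2\,\mathrm{d}x>0$. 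Writing $I_R:=\int_0^\infty\!\!\int_{\mb{R}^n}|u|^p\psi_R\,\mathrm{d}x\,\mathrm{d}t$ and $\ml{L}^\ast:=-\partial_t^3+\ml{A}+\eta\ml{A}^{1/3}\partial_t^2-\eta\ml{A}^{2/3}\partial_t$, one then has, for every $R<T_{\epsilon,\mathrm{w}}/2$,
\begin{align*}
I_R+c_0\epsilon\leqslant\int_0^\infty\!\!\int_{\mb{R}^n}|u|\,\bigl|\ml{L}^\ast\psi_R\bigr|\,\mathrm{d}x\,\mathrm{d}t.
\end{align*}

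\noindent\textbf{Subcritical case.} Apply Hölder's inequality with exponents $p,p'$ and then Young's inequality to absorb $I_R$; this yields $c_0\epsilon\lesssim J_R:=\int_0^\infty\!\!\int_{\mb{R}^n}\bigl|\ml{L}^\ast\psi_R\bigr|^{p'}\psi_R^{\,1-p'}\,\mathrm{d}x\,\mathrm{d}t$. By the parabolic scaling every term of $\ml{L}^\ast\psi_R$ is pointwise $\lesssim R^{-3}\psi_R^{1/p}\,\mathbf{1}_{\{R\leqslant t\leqslant2R\}}$ — for the non-integer fractional powers this uses the classical pointwise bounds for $\ml{A}^\alpha$ acting on the polynomially-decaying profile $\Phi$, valid for all $\sigma\in(0,\infty)$ — so $J_R\lesssim R^{-3p'}\cdot R\cdot R^{3n/(2\sigma)}=R^{-\gamma}$ with $\gamma:=3p'-1-\tfrac{3n}{2\sigma}=\tfrac{6\sigma p'-(3n+2\sigma)}{2\sigma}$, and a direct check shows $\gamma>0$ exactly when $p<p_{\mathrm{crit}}(n,\sigma)$. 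Consequently $\epsilon\lesssim R^{-\gamma}$ for every admissible $R<T_{\epsilon,\mathrm{w}}/2$, which forces $T_{\epsilon,\mathrm{w}}\leqslant C\epsilon^{-1/\gamma}=C\epsilon^{-\frac{2\sigma}{6\sigma p'-(3n+2\sigma)}}$, i.e.\ \eqref{Upper-1}.

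\noindent\textbf{Critical case.} Now $\gamma=0$, so $J_R\simeq1$ and the above gives only $\epsilon\lesssim1$. Following the refinement of Ikeda--Sobajima \cite{Ikeda-Sobajima=2019} and Ebert--Girardi--Reissig \cite{Ebert-Girardi-Reissig=2020} for critical classical damped waves, I would instead use the family \eqref{Su01} carrying an additional logarithmic weight in the self-similar variable $|x|/t^{3/(2\sigma)}$; the hypothesis $\sigma\in3\mb{N}$ is imposed precisely so that $\ml{A}^{1/3}$, $\ml{A}^{2/3}$, $\ml{A}$ are genuine differential operators acting on the $\ml{C}_0^\infty$ data and on the logarithmic test function, keeping all error terms controllable. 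Testing \eqref{Eq-Third-PDE} against this one-parameter family and using \eqref{Sign_Assumption} to seed a \emph{linear} (in $\epsilon$) lower bound, one arrives at a Kato-type ordinary differential inequality
\begin{align*}
Y'(R)\gtrsim R^{-1}Y(R)^p,\qquad Y(R_0)\gtrsim\epsilon,
\end{align*}
for a functional $Y$ which is linear in $u$ and carries the logarithmic time weight. Integrating, $Y(R)^{1-p}\leqslant Y(R_0)^{1-p}-c\log(R/R_0)$, so $Y$ becomes infinite no later than $R=R_0\exp\!\bigl(C\,Y(R_0)^{-(p-1)}\bigr)\leqslant\exp\!\bigl(C\epsilon^{-(p-1)}\bigr)$; since the test-function argument requires $R<T_{\epsilon,\mathrm{w}}$, this gives $T_{\epsilon,\mathrm{w}}\leqslant\exp(C\epsilon^{-(p-1)})$, namely \eqref{Upper-2}.

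\noindent The main obstacle is the critical case: everything hinges on arranging the logarithmically-weighted test function \eqref{Su01} so that, after integration by parts against the two nonlocal damping terms $\eta\ml{A}^{1/3}u_{tt}$, $\eta\ml{A}^{2/3}u_t$ and the three initial conditions, the weight in the resulting differential inequality is \emph{exactly} $R^{-1}$ — a weaker weight produces no improvement over the subcritical estimate and a stronger one is unavailable — which is possible only at $p=p_{\mathrm{crit}}(n,\sigma)$ with the parabolic exponent $3/(2\sigma)$ and the correct power $\ell$. The subcritical case is essentially routine once the pointwise estimates for $\ml{A}^\alpha\Phi(\cdot/R^{3/(2\sigma)})$ with non-integer $\sigma\alpha$ are in hand.
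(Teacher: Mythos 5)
Your subcritical argument is essentially the paper's: the same rescaled test function method with the parabolic scaling $|x|\sim t^{3/(2\sigma)}$, H\"older plus Young to absorb $I_R$, and the power count $\gamma=\frac{6\sigma p'-(3n+2\sigma)}{2\sigma}>0\iff p<p_{\mathrm{crit}}(n,\sigma)$; this matches the derivation of \eqref{Upper-Subcrit} in Section \ref{Sec-Blow-up}. (One small inaccuracy: with a product test function the term $\ml{A}\psi_R$ is \emph{not} supported in $\{R\leqslant t\leqslant 2R\}$ but on the whole time slab; this is harmless for the subcritical power counting but becomes relevant below.)

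The critical case, however, contains a genuine gap: the entire difficulty is to \emph{derive} the differential inequality, and you assert it instead. You posit a functional $Y$ that is \emph{linear} in $u$, satisfies $Y(R_0)\gtrsim\epsilon$ and $Y'(R)\gtrsim R^{-1}Y(R)^p$, but neither property is obtainable by the route you sketch. The sign condition \eqref{Sign_Assumption} concerns $u_2=u_{tt}(0,\cdot)$, and transferring it into a lower bound for a linear functional of $u$ at positive times would require integrating a third-order-in-time equation with the nonlocal damping terms $\eta\ml{A}^{1/3}u_{tt}$, $\eta\ml{A}^{2/3}u_t$, whose sign structure is not controlled; likewise, producing $Y^p$ on the right-hand side from a linear functional needs a Jensen-type step that you do not set up. Your description of \eqref{Su01} as carrying ``an additional logarithmic weight'' is also not what the paper does: \eqref{Su01} is $\psi_R(t,x)=[\chi((t+|x|^{2\sigma/3})/R)]^m$, a function of the \emph{single} self-similar variable $t+|x|^{2\sigma/3}$ with no logarithm. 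That joint dependence is essential: it forces every term of the adjoint operator applied to $\psi_R$ — including $\ml{A}\psi_R$ — to be supported where $\psi_R^*$ lives, which is exactly what a product test function fails to do and what makes the critical refinement work. The logarithm enters only through the scale average $Y_p(R):=\int_0^R y_p(r)\,r^{-1}\mathrm{d}r$ with $y_p(r)=\iint|u|^p[\psi_r^*]^{p-2\sigma p/m}$, a functional built from the $L^p$ norm of $u$ (not linear in $u$). The paper's chain is: H\"older on the annulus gives \eqref{E7} with zero net power of $R$ at $p=p_{\mathrm{crit}}$, the comparison \eqref{E8} gives $Y_p(R)\lesssim\iint|u|^p\psi_R$, and combining yields $\big(\epsilon\int_{\mb{R}^n}u_2\,\mathrm{d}x+Y_p(R)\big)^p\lesssim R\,Y_p'(R)$ (\eqref{E9}), where the $\epsilon$-seed appears \emph{additively inside the $p$-th power}, not as an initial condition $Y(R_0)\gtrsim\epsilon$; a two-step integration (first $Y_p(R)\gtrsim\epsilon^p\ln R$, then the Kato step on $[\sqrt{R},R]$) closes the argument. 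Without constructing such a functional and proving the analogues of \eqref{E7}--\eqref{E9}, the bound \eqref{Upper-2} is not established.
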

\begin{remark}
In the critical case, the technical assumption $\sigma\in3\mb{N}$ comes from the treatment of the operator $\ml{A}^{\frac{1}{3}}=(-\Delta)^{\frac{1}{3}\sigma}$. The question of sharp upper bound estimates \eqref{Upper-2} when $p=p_{\mathrm{crit}}(n,\sigma)$ for a general parameter $\sigma\in(0,\infty)$ is still open.
\end{remark}

By the standard density argument (see, for example, \cite[Proposition 3.1]{Ikeda-Wakasugi=2013}), provided that $u$ is a mild solution to the semilinear Cauchy problem \eqref{Eq-Third-PDE}, then $u$ is also a weak solution to it. Therefore, the relation $T_{\epsilon,\mathrm{m}}\leqslant T_{\epsilon}\leqslant T_{\epsilon,\mathrm{w}}$ is valid.
Let us summarize the lower bound estimates \eqref{Lifespan+Upper+Bound}, and upper bound estimates \eqref{Upper-1}-\eqref{Upper-2}. We may claim that the sharp lifespan estimates to $T_{\epsilon}$ for local (in time) solutions to the semilinear Cauchy problem \eqref{Eq-Third-PDE} with $p\geqslant 2$ are given by
\begin{align*}
T_{\epsilon}\simeq\begin{cases}
 C\epsilon^{-\frac{2\sigma}{6\sigma p'-(3n+2\sigma)}}&\mbox{if}\ \  p<p_{\mathrm{crit}}(n,\sigma),\\
 \exp\big(C\epsilon^{-\frac{6\sigma}{(3n-4\sigma)_+}}\big)&\mbox{if}\ \ p=p_{\mathrm{crit}}(n,\sigma),\\
\end{cases}
\end{align*}
for some parameters $n,\sigma$, with a positive constant $C$ independent of $\epsilon$.
\begin{exam}
Let us consider the semilinear Cauchy problem \eqref{Example}. Combining with Theorems \ref{Thm-Lifespan-lower} and \ref{Thm-Lifespan-upper}, we assume the small data with $0<\epsilon\ll 1$ satisfying \eqref{H2},
and additionally $\ml{C}_0^{\infty}$ regularities with supports for initial data if $p=p_{\mathrm{crit}}(n,3)=1+\frac{6}{(n-4)_+}$. Then, the sharp lifespan estimates for \eqref{Example} with $p\geqslant 2$ are given by
\begin{align*}
T_{\epsilon}\simeq\begin{cases}
C\epsilon^{-\frac{2}{6 p'-(n+2)}}&\mbox{if}\ \  p<1+\frac{6}{(n-4)_+},\\
\exp\big(C\epsilon^{-\frac{6}{(n-4)_+}}\big)&\mbox{if}\ \ p=1+\frac{6}{(n-4)_+},\\
\end{cases}
\end{align*}
when $n=5,\dots,10$ if $\eta\in(1,3)\cup(3,\infty)$, and $n=1,\dots,10$ when $\eta=3$.
\end{exam}

%\newpage
\section{Linearized third-order (in time) evolution equations}\label{Sec-linear}
$\ \ \ \ $As a preparation to study the semilinear Cauchy problem \eqref{Eq-Third-PDE} and investigate some influence of the parameter $\eta$, in this section, we consider the corresponding linearized third-order (in time) evolution equations with vanishing right-hand side, namely,
\begin{align}\label{Eq-Linear-Third-PDE}
	\begin{cases}
		v_{ttt}+\ml{A}v+\eta\ml{A}^{\frac{1}{3}}v_{tt}+\eta\ml{A}^{\frac{2}{3}}v_t=0,&x\in\mb{R}^n,\ t>0,\\
		v(0,x)=v_0(x),\ v_t(0,x)= v_1(x),\ v_{tt}(0,x)=v_2(x),&x\in\mb{R}^n,
	\end{cases}
\end{align}
where $\ml{A}=(-\Delta)^{\sigma}$ with $\sigma\in(0,\infty)$ and $\eta\in(0,\infty)$. In particular, we will introduce two thresholds: $\eta=1$ for the Sobolev stability, and $\eta=3$ for  various asymptotic profiles of solutions. One may see the detailed explanations in Remark \ref{Rem-eta=1}, Remark \ref{Rem-eta=3} as well as Figure \ref{imgg} later.

\subsection{Stability analysis of Sobolev solutions}
$\ \ \ \ $Let us apply the partial Fourier transform with respect to the spatial variables to the linearized Cauchy problem \eqref{Eq-Linear-Third-PDE} to derive
\begin{align}\label{Eq-Fourier-Third-PDE}
	\begin{cases}
		\widehat{v}_{ttt}+|\xi|^{2\sigma}\widehat{v}+\eta|\xi|^{\frac{2}{3}\sigma}\widehat{v}_{tt}+\eta|\xi|^{\frac{4}{3}\sigma}\widehat{v}_t=0,&\xi\in\mb{R}^n,\ t>0,\\
		\widehat{v}(0,\xi)=\widehat{v}_0(\xi),\ \widehat{v}_t(0,\xi)= \widehat{v}_1(\xi),\ \widehat{v}_{tt}(0,\xi)=\widehat{v}_2(\xi),&\xi\in\mb{R}^n,
	\end{cases}
\end{align}
whose characteristic equation
\begin{align*}
	(\lambda+|\xi|^{\frac{2}{3}\sigma})\big(\lambda^2+(\eta-1)|\xi|^{\frac{2}{3}\sigma}\lambda+|\xi|^{\frac{4}{3}\sigma}\big)=0
\end{align*}
has the $|\xi|$-dependent roots
\begin{align}\label{Characteristic_Roots}
\lambda_1=-|\xi|^{\frac{2}{3}\sigma}\ \ \mbox{and}\ \ \lambda_{2,3}=\frac{1}{2}\left(1-\eta\pm\sqrt{\eta^2-2\eta-3}\,\right)|\xi|^{\frac{2}{3}\sigma}.
\end{align}

Therewith, we may demonstrate the ill-/well-posedness results of Sobolev solutions to the linearized Cauchy problem \eqref{Eq-Linear-Third-PDE} under different size of $\eta$ and regularities of initial data.
\begin{prop}\label{Prop-Stable} Let us consider the linearized Cauchy problem \eqref{Eq-Linear-Third-PDE} with $\eta\in(0,\infty)$ as well as $\sigma\in(0,\infty)$.
	\begin{enumerate}[(1)]
\item Assuming the Gevrey-Sobolev regularities for initial data $v_j\in G^{\frac{3}{2\sigma},s+\frac{2(2-j)}{3}\sigma}$ with $j=0,1,2$, there exists a unique Sobolev solution 
\begin{align}\label{Sobolev-solution}
	v\in\ml{C}\big([0,\infty),H^{s+\frac{4}{3}\sigma}\big)\cap \ml{C}^1\big([0,\infty),H^{s+\frac{2}{3}\sigma}\big)\cap \ml{C}^2\big([0,\infty),H^{s}\big)
\end{align}
to the Cauchy problem \eqref{Eq-Linear-Third-PDE} with $\eta\in(0,1)$ for any $s\in[0,\infty)$. However, by assuming any Sobolev regularities for initial data, the Cauchy problem \eqref{Eq-Linear-Third-PDE} with $\eta\in(0,1)$ is ill-posed.
\item Assuming the Sobolev regularities for initial data $v_j\in H^{s+\frac{2(2-j)}{3}\sigma}$ with $j=0,1,2$, there exists a unique Sobolev solution \eqref{Sobolev-solution} to the Cauchy problem \eqref{Eq-Linear-Third-PDE} with $\eta=1$ for any $s\in[0,\infty)$.
\item  Assuming the Sobolev regularities for initial data $v_j\in H^{s+\frac{2(2-j)}{3}\sigma}$ with $j=0,1,2$, there exists a unique Gevrey-Sobolev solution
\begin{align*}
	v\in\ml{C}\big([0,\infty),G^{\frac{3}{2\sigma},s+\frac{4}{3}\sigma}\big)\cap \ml{C}^1\big([0,\infty),G^{\frac{3}{2\sigma},s+\frac{2}{3}\sigma}\big)\cap \ml{C}^2\big([0,\infty),G^{\frac{3}{2\sigma},s}\big)
\end{align*}
to the Cauchy problem \eqref{Eq-Linear-Third-PDE} with $\eta\in(1,\infty)$ for any $s\in[0,\infty)$.
	\end{enumerate}
\end{prop}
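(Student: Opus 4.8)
The plan is to reduce \eqref{Eq-Linear-Third-PDE} to the scalar Fourier ODE \eqref{Eq-Fourier-Third-PDE} and to solve it explicitly through the characteristic roots \eqref{Characteristic_Roots}. Writing $\lambda_\ell=\mu_\ell(\eta)|\xi|^{\frac{2}{3}\sigma}$, the solution is
\[
\widehat v(t,\xi)=\sum_{j=0}^{2}\widehat K_j(t,\xi)\,\widehat v_j(\xi),
\]
where $\widehat K_j$ is obtained by inverting the Vandermonde system in $\lambda_1,\lambda_2,\lambda_3$; in the degenerate case $\eta=3$, where $\lambda_1=\lambda_2=\lambda_3=-|\xi|^{\frac{2}{3}\sigma}$, one passes to the confluent limit, giving $\widehat v(t,\xi)=\big(p_0+p_1t+p_2t^2\big)\mathrm{e}^{-|\xi|^{\frac{2}{3}\sigma}t}$ with $p_0=\widehat v_0$, $p_1=\widehat v_1+|\xi|^{\frac{2}{3}\sigma}\widehat v_0$ and $p_2=\tfrac12\widehat v_2+|\xi|^{\frac{2}{3}\sigma}\widehat v_1+\tfrac12|\xi|^{\frac{4}{3}\sigma}\widehat v_0$. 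Existence of the claimed solutions is then read off from this representation, and uniqueness follows from uniqueness for the $\xi$-wise ODE, so the statement reduces to pointwise bounds for $\partial_t^k\widehat K_j$ together with weight bookkeeping.

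The central estimate I would establish is that, for $\xi\neq 0$ and $k,j\in\{0,1,2\}$,
\[
\big|\partial_t^k\widehat K_j(t,\xi)\big|\ \lesssim\ \big\langle t|\xi|^{\frac{2}{3}\sigma}\big\rangle^{2}\,\mathrm{e}^{\gamma(\eta)|\xi|^{\frac{2}{3}\sigma}t}\,|\xi|^{\frac{2}{3}\sigma(k-j)},\qquad \gamma(\eta):=\max_{\ell}\operatorname{Re}\mu_\ell(\eta),
\]
using only that the differences $\lambda_i-\lambda_j$ are nonzero and $\simeq|\xi|^{\frac{2}{3}\sigma}$ for $\xi\neq0$ (or, for $\eta=3$, the confluent formula) together with $|\mathrm{e}^{a}-\mathrm{e}^{b}|\le|a-b|\max(|\mathrm{e}^a|,|\mathrm{e}^b|)$ for divided differences. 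A direct inspection of $\operatorname{Re}\mu_\ell$ shows $\gamma(\eta)>0\Leftrightarrow\eta\in(0,1)$, $\gamma(1)=0$, and $\gamma(\eta)<0\Leftrightarrow\eta\in(1,\infty)$, which is precisely the threshold in the statement. When $j>k$ the factor $|\xi|^{\frac{2}{3}\sigma(k-j)}$ is singular at $\xi=0$; for $\eta\neq3$ the relevant row of the Vandermonde inversion forces $\sum_\ell(\text{coefficient of }\mathrm{e}^{\lambda_\ell t})=0$ there, so rewriting $\partial_t^k\widehat K_j=\sum_\ell(\cdots)(\mathrm{e}^{\lambda_\ell t}-1)$ and using $|\mathrm{e}^{\lambda_\ell t}-1|\lesssim t|\xi|^{\frac{2}{3}\sigma}$ cancels the singularity, while for $\eta=3$ the confluent coefficients are already regular at $\xi=0$. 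Consequently, for $\eta\in(1,\infty)$ the bound carries a parabolic-type decay $\mathrm{e}^{-c(\eta)|\xi|^{\frac{2}{3}\sigma}t}$; for $\eta=1$ it is merely bounded in $t$ (the modes $\lambda_{2,3}=\pm i|\xi|^{\frac{2}{3}\sigma}$ only oscillate, $\lambda_1$ decays); for $\eta\in(0,1)$ it grows like $\mathrm{e}^{\frac{1-\eta}{2}|\xi|^{\frac{2}{3}\sigma}t}$.

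The three assertions then follow by weighted-$L^2$ arguments. For part (2) ($\eta=1$), multiplying $\widehat{\partial_t^k v}$ by $\langle\xi\rangle^{s+\frac{2(2-k)}{3}\sigma}$, the boundedness of $\widehat K_j$, the gain $|\xi|^{-\frac{2}{3}\sigma j}$ for the lower-regularity data $v_1,v_2$, and the $\xi\to0$ cancellation give $\partial_t^k v\in\ml{C}\big([0,\infty),H^{s+\frac{2(2-k)}{3}\sigma}\big)$ by dominated convergence. For part (3) ($\eta\in(1,\infty)$), for each fixed $t>0$ the factor $\mathrm{e}^{-c(\eta)|\xi|^{\frac{2}{3}\sigma}t}$ absorbs any Gevrey weight $\mathrm{e}^{c_0\langle\xi\rangle^{1/m}}$ with $m=\tfrac{3}{2\sigma}$ once $c_0\le\tfrac12 c(\eta)t$, the polynomial factor $\langle t|\xi|^{\frac{2}{3}\sigma}\rangle^{2}$ being harmless, so Sobolev data is smoothed into $G^{\frac{3}{2\sigma},\,\cdot}$ for positive times, with continuity down to $t=0$ in the Sobolev topology. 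For the well-posedness half of part (1) ($\eta\in(0,1)$), if $\mathrm{e}^{c\langle\xi\rangle^{\frac{2}{3}\sigma}}\langle\xi\rangle^{s+\frac{2(2-j)}{3}\sigma}\widehat v_j\in L^2$ for the data, pairing with the growth $\mathrm{e}^{\frac{1-\eta}{2}|\xi|^{\frac{2}{3}\sigma}t}$ and the gains $|\xi|^{-\frac{2}{3}\sigma j}$ gives $\partial_t^k v(t,\cdot)\in H^{s+\frac{2(2-k)}{3}\sigma}$ for all $t$, so the solution map on Gevrey--Sobolev data is globally defined.

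It remains the ill-posedness half of part (1). Taking $v_1=v_2=0$ and $\widehat v_0=\mathbf{1}_{B(\xi_k,\rho_k)}$ one has $\widehat v(t,\xi)=\big(\gamma_1\mathrm{e}^{\lambda_1 t}+2\operatorname{Re}(\gamma_2\mathrm{e}^{\lambda_2 t})\big)\widehat v_0(\xi)$ with constants $\gamma_1\in\mb{R}$ and $\gamma_2\neq0$; fixing $t_0>0$, choosing $\xi_k\to\infty$ along which $\operatorname{Re}\!\big(\gamma_2\mathrm{e}^{i\frac{\sqrt{(3-\eta)(\eta+1)}}{2}|\xi_k|^{\frac{2}{3}\sigma}t_0}\big)\ge\tfrac12|\gamma_2|$ and $\rho_k$ small accordingly, the solution satisfies $|\widehat v(t_0,\xi)|\gtrsim\mathrm{e}^{\frac{1-\eta}{2}|\xi|^{\frac{2}{3}\sigma}t_0}$ on $B(\xi_k,\rho_k)$, so $\|v(t_0,\cdot)\|_{H^b}/\|(v_0,v_1,v_2)\|_{H^a\times H^{a'}\times H^{a''}}\to\infty$ for every finite $a,a',a'',b$, which is the asserted Sobolev ill-posedness. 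Throughout, I expect the main obstacle to be uniformity rather than any individual inequality: getting $\xi$-independent constants in the multiplier bounds across the low-frequency zone — where the raw factors $|\xi|^{-\frac{2}{3}\sigma j}$ are unavailable and one must use the exact cancellation $\sum_\ell(\cdots)=0$ — and across the degenerate value $\eta=3$, where the confluent representation replaces the Vandermonde one, while keeping the Gevrey constants and the polynomial weights $\langle t|\xi|^{\frac{2}{3}\sigma}\rangle$ controlled uniformly on compact time intervals so that the continuity-in-time parts of the conclusion hold.
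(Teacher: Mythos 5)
Your proposal follows essentially the same route as the paper: reduce to the Fourier ODE, compute the characteristic roots $\lambda_1=-|\xi|^{\frac{2}{3}\sigma}$, $\lambda_{2,3}=\frac{1}{2}\big(1-\eta\pm\sqrt{\eta^2-2\eta-3}\,\big)|\xi|^{\frac{2}{3}\sigma}$, read the threshold $\eta=1$ off the sign of $\mathrm{Re}\,\lambda_{2,3}$, and convert the resulting pointwise multiplier bounds (growth $\mathrm{e}^{c|\xi|^{\frac{2}{3}\sigma}t}$ for $\eta\in(0,1)$, boundedness for $\eta=1$, decay for $\eta\in(1,\infty)$, confluent polynomial factor at $\eta=3$) into weighted-$L^2$ statements, with the Gevrey weights absorbing or being produced by the exponential factors exactly as in the paper's Cases 1--3. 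Your explicit high-frequency construction for the ill-posedness half of (1) is more detailed than the paper, which simply points to the exponential growth of the large-frequency multiplier and cites the semigroup result of Bezerra--Carvalho--Santos, but the underlying mechanism is identical.
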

\begin{proof}
According to the size of $\eta$ and the value of characteristic roots $\lambda_{2,3}$, we will divide our discussion into several cases. To begin with the proof, let us derive  representations of solution in the Fourier space with different size of $\eta$.\\
\underline{\bf Representation of solution when $\eta\in(0,3)$:} Because of $\eta^2-2\eta-3<0$ when $\eta\in(0,3)$, by taking $\lambda_{2,3}=\mu_{\mathrm{R}}\pm i\mu_{\mathrm{I}}$ endowed with
\begin{align*}
	\mu_{\mathrm{R}}:=\frac{1}{2}(1-\eta)|\xi|^{\frac{2}{3}\sigma}\begin{cases}
	>0&\mbox{if}\ \ \eta\in(0,1),\\
	=0&\mbox{if}\ \ \eta=1,\\
	<0&\mbox{if}\ \ \eta\in(1,3),
	\end{cases}\ \   \mbox{and}\ \ \mu_{\mathrm{I}}:=\frac{1}{2}\sqrt{3+2\eta-\eta^2}\,|\xi|^{\frac{2}{3}\sigma},
\end{align*}
the solution $\widehat{v}=\widehat{v}(t,\xi)$ to the initial value problem \eqref{Eq-Fourier-Third-PDE} can be expressed by
\begin{align}
	\widehat{v}&=\frac{-(\mu_{\mathrm{I}}^2+\mu_{\mathrm{R}}^2)\widehat{v}_0+2\mu_{\mathrm{R}}\widehat{v}_1-\widehat{v}_2}{2\mu_{\mathrm{R}}\lambda_1-\mu_{\mathrm{I}}^2-\mu_{\mathrm{R}}^2-\lambda_1^2}\mathrm{e}^{\lambda_1t}+\frac{(2\mu_{\mathrm{R}}\lambda_1-\lambda_1^2)\widehat{v}_0-2\mu_{\mathrm{R}}\widehat{v}_1+\widehat{v}_2}{2\mu_{\mathrm{R}}\lambda_1-\mu_{\mathrm{I}}^2-\mu_{\mathrm{R}}^2-\lambda_1^2}\cos(\mu_{\mathrm{I}}t)\,\mathrm{e}^{\mu_{\mathrm{R}}t}\notag\\
	&\quad+\frac{\lambda_1(\mu_{\mathrm{R}}\lambda_1+\mu_{\mathrm{I}}^2-\mu_{\mathrm{R}}^2)\widehat{v}_0+(\mu_{\mathrm{R}}^2-\mu_{\mathrm{I}}^2-\lambda_1^2)\widehat{v}_1-(\mu_{\mathrm{R}}-\lambda_1)\widehat{v}_2}{\mu_{\mathrm{I}}(2\mu_{\mathrm{R}}\lambda_1-\mu_{\mathrm{I}}^2-\mu_{\mathrm{R}}^2-\lambda_1^2)}\sin(\mu_{\mathrm{I}}t)\,\mathrm{e}^{\mu_{\mathrm{R}}t}\notag\\
	&=-\frac{|\xi|^{\frac{4}{3}\sigma}\widehat{v}_0+(\eta-1)|\xi|^{\frac{2}{3}\sigma}\widehat{v}_1+\widehat{v}_2}{(\eta-3)|\xi|^{\frac{4}{3}\sigma}}\mathrm{e}^{-|\xi|^{\frac{2}{3}\sigma}t}\notag\\
	&\quad+\frac{(\eta-2)|\xi|^{\frac{4}{3}\sigma}\widehat{v}_0+(\eta-1)|\xi|^{\frac{2}{3}\sigma}\widehat{v}_1+\widehat{v}_2}{(\eta-3)|\xi|^{\frac{4}{3}\sigma}}\cos(C_{\eta}|\xi|^{\frac{2}{3}\sigma}t)\,\mathrm{e}^{\frac{1}{2}(1-\eta)|\xi|^{\frac{2}{3}\sigma }t}\notag\\
	&\quad+\frac{\eta|\xi|^{\frac{4}{3}\sigma}\widehat{v}_0+(\eta+1)|\xi|^{\frac{2}{3}\sigma}\widehat{v}_1+\widehat{v}_2}{2C_{\eta}|\xi|^{\frac{4}{3}\sigma}}\sin(C_{\eta}|\xi|^{\frac{2}{3}\sigma}t)\,\mathrm{e}^{\frac{1}{2}(1-\eta)|\xi|^{\frac{2}{3}\sigma }t}\label{B1}
\end{align}  
with the constant $C_{\eta}:=\frac{1}{2}\sqrt{3+2\eta-\eta^2}>0$ as $\eta\in(0,3)$. \\
\underline{\bf Representation  of solution when $\eta\in(3,\infty)$:} Due to the pairwise distinct characteristic roots, we may employ the general representation (it also holds for any $\eta\neq 3$) as follows:
\begin{align*}
\widehat{v}=\widehat{K}_0\widehat{v}_0+\widehat{K}_1\widehat{v}_1+\widehat{K}_2\widehat{v}_2
\end{align*}
with the kernels in the Fourier space carrying $k=1,2,3$ such that
\begin{align*}
	\widehat{K}_0:=\sum\limits_{j=1,2,3}\frac{\mathrm{e}^{\lambda_jt}\prod\limits_{k\neq j}\lambda_k}{\prod\limits_{k\neq j}(\lambda_j-\lambda_k)},\ \ 
	\widehat{K}_1:=-\sum\limits_{j=1,2,3}\frac{\mathrm{e}^{\lambda_jt}\sum\limits_{k\neq j}\lambda_k}{\prod\limits_{k\neq j}(\lambda_j-\lambda_k)},\ \ 
	\widehat{K}_2:=\sum\limits_{j=1,2,3}\frac{\mathrm{e}^{\lambda_jt}}{\prod\limits_{k\neq j}(\lambda_j-\lambda_k)}.
\end{align*}
 Remark that \eqref{B1} was deduced by the last kernels associated with the conjugate roots $\lambda_{2,3}$. Combining with the characteristic roots and denoting $D_{\eta}:=\frac{1}{2}\sqrt{\eta^2-2\eta-3}>0$ as $\eta\in(3,\infty)$, which fulfills the relation $D_{\eta}+\frac{1}{2}(1-\eta)<0$, the explicit representation of solution in the Fourier space for $\eta\in(3,\infty)$ is given by
\begin{align}
\widehat{v}&=\left[\frac{\mathrm{e}^{-|\xi|^{\frac{2}{3}\sigma}t}}{3-\eta}+\frac{\mathrm{e}^{\frac{1}{2}(1-\eta)|\xi|^{\frac{2}{3}\sigma }t}}{2D_{\eta}}\left(\frac{2D_{\eta}-1+\eta}{3-\eta+2D_{\eta}}\mathrm{e}^{D_{\eta}|\xi|^{\frac{2}{3}\sigma }t}+\frac{2D_{\eta}+1-\eta}{3-\eta-2D_{\eta}}\mathrm{e}^{-D_{\eta}|\xi|^{\frac{2}{3}\sigma }t}\right)\right]\widehat{v}_0\notag\\
&\quad+\left[\frac{(\eta-1)\mathrm{e}^{-|\xi|^{\frac{2}{3}\sigma }t}}{(3-\eta)|\xi|^{\frac{2}{3}\sigma }}+\frac{\mathrm{e}^{\frac{1}{2}(1-\eta)|\xi|^{\frac{2}{3}\sigma }t}}{2D_{\eta}|\xi|^{\frac{2}{3}\sigma }}\left(\frac{2D_{\eta}+1+\eta}{3-\eta+2D_{\eta}}\mathrm{e}^{D_{\eta}|\xi|^{\frac{2}{3}\sigma }t}+\frac{2D_{\eta}-1-\eta}{3-\eta-2D_{\eta}}\mathrm{e}^{-D_{\eta}|\xi|^{\frac{2}{3}\sigma }t}\right)\right]\widehat{v}_1\notag\\
&\quad+\left[\frac{\mathrm{e}^{-|\xi|^{\frac{2}{3}\sigma }t}}{(3-\eta)|\xi|^{\frac{4}{3}\sigma }}+\frac{\mathrm{e}^{\frac{1}{2}(1-\eta)|\xi|^{\frac{2}{3}\sigma }t}}{D_{\eta}|\xi|^{\frac{4}{3}\sigma }}\left(\frac{1}{3-\eta+2D_{\eta}}\mathrm{e}^{D_{\eta}|\xi|^{\frac{2}{3}\sigma }t}-\frac{1}{3-\eta-2D_{\eta}}\mathrm{e}^{-D_{\eta}|\xi|^{\frac{2}{3}\sigma }t}\right)\right]\widehat{v}_2.\label{B2}
\end{align}
\underline{\bf Representation  of solution when $\eta=3$:}  Finally, the identical characteristic roots $\lambda_1=\lambda_2=\lambda_3=-|\xi|^{\frac{2}{3}\sigma}$ when $\eta=3$ imply
\begin{align*}
	\widehat{v}=\left[\widehat{v}_0+\left(|\xi|^{\frac{2}{3}\sigma}\widehat{v}_0+\widehat{v}_1\right)t+\frac{1}{2}\left(|\xi|^{\frac{4}{3}\sigma}\widehat{v}_0+2|\xi|^{\frac{2}{3}\sigma}\widehat{v}_1+\widehat{v}_2\right)t^2\right]\mathrm{e}^{-|\xi|^{\frac{2}{3}\sigma}t}.
\end{align*}

It is well-known that the well-posedness of Cauchy problem is determined by the large frequencies part, consequently we may apply the previous representations of solution to get the following pointwise estimates in the Fourier space:
\begin{align}\label{Critical-Gevrey-data-01}
\chi_{\extt}(\xi)|\widehat{v}|&\lesssim \begin{cases}
\chi_{\extt}(\xi)\,\mathrm{e}^{c|\xi|^{\frac{2}{3}\sigma}t}\left(|\widehat{v}_0|+\langle\xi\rangle^{-\frac{2}{3}\sigma}|\widehat{v}_1|+\langle\xi\rangle^{-\frac{4}{3}\sigma}|\widehat{v}_2|\right)&\mbox{if}\ \ \eta\in(0,1),\\
\chi_{\extt}(\xi)\left(|\widehat{v}_0|+\langle\xi\rangle^{-\frac{2}{3}\sigma}|\widehat{v}_1|+\langle\xi\rangle^{-\frac{4}{3}\sigma}|\widehat{v}_2|\right)&\mbox{if}\ \ \eta=1,\\
\chi_{\extt}(\xi)\,\mathrm{e}^{-c|\xi|^{\frac{2}{3}\sigma}t}\left(|\widehat{v}_0|+\langle\xi\rangle^{-\frac{2}{3}\sigma}|\widehat{v}_1|+\langle\xi\rangle^{-\frac{4}{3}\sigma}|\widehat{v}_2|\right)&\mbox{if}\ \ \eta\in(1,3)\cup(3,\infty),\\
\chi_{\extt}(\xi)\,\mathrm{e}^{-c|\xi|^{\frac{2}{3}\sigma}t}\big(|\widehat{v}_0|+t|\widehat{v}_1|+t^2|\widehat{v}_2|\big)&\mbox{if}\ \ \eta=3,
\end{cases}
\end{align}
with some positive constants $c>0$, where we used $y\,\mathrm{e}^{-2y}\lesssim \mathrm{e}^{-y}$. Then, let us discuss the well-posed property in several situations.
\begin{description}
	\item[Case 1: $\eta\in(0,1)$.] If we assume  any Sobolev regularities for initial data, due to the exponential growth for large frequencies $\chi_{\extt}(\xi)\exp(c|\xi|^{\frac{2}{3}\sigma}t)$ with $c>0$, the Sobolev solution is instable. It leads to the ill-posedness of the linearized Cauchy problem \eqref{Eq-Linear-Third-PDE}, which exactly coincides with \cite[Theorem 3.1]{Bezerra-Carvalho-Santos=2022}. Nevertheless, by assuming the Gevrey-Sobolev regularities for initial data $v_j\in G^{\frac{3}{2\sigma},s+\frac{2(2-j)}{3}\sigma}$ with $j=0,1,2$, namely, $\exp(c\langle\xi\rangle^{\frac{2}{3}\sigma})\langle\xi\rangle^{s+\frac{2(2-j)}{3}\sigma}\widehat{v}_j\in L^2$, one may arrive at
	\begin{align}\label{Eq-01}
		\chi_{\extt}(\xi)\langle\xi\rangle^{s+\frac{2(2-j)}{3}\sigma}\partial_t^j\widehat{v}\in L^2\ \ \mbox{with}\ \ j=0,1,2.
	\end{align} 
Hereinafter, the $L^2$ space concerns the variables $\xi$. Thus, the well-posedness result with the Gevrey-Sobolev data can be proved straightforwardly when $\eta\in(0,1)$.
\item[Case 2: $\eta=1$.] By taking $v_j\in H^{s+\frac{2(2-j)}{3}\sigma}$, i.e. $\langle\xi\rangle^{s+\frac{2(2-j)}{3}\sigma}\widehat{v}_j\in L^2$, for $j=0,1,2$, we notice that \eqref{Eq-01} still holds, which implies the well-posedness of the Cauchy problem \eqref{Eq-Linear-Third-PDE} when $\eta=1$ with the Sobolev data.
\item[Case 3: $\eta\in(1,\infty)$.] Similarly to Case 2, the well-posedness statement for the Sobolev solution is easily achieved by assuming $v_j\in H^{s+\frac{2(2-j)}{3}\sigma}$ with $j=0,1,2$. Furthermore, benefited from the exponential propagator $\chi_{\extt}(\xi)\exp(-c|\xi|^{\frac{2}{3}\sigma}t)$, we may derive
	\begin{align*}
	\chi_{\extt}(\xi)\,\mathrm{e}^{c|\xi|^{\frac{2}{3}\sigma}t}\langle\xi\rangle^{s+\frac{2(2-j)}{3}\sigma}\partial_t^j\widehat{v}\in L^2\ \ \mbox{with}\ \ j=0,1,2.
\end{align*} 
It leads to smoothing effect from the Sobolev data to the general Gevrey solution and the well-posedness of the Cauchy problem \eqref{Eq-Linear-Third-PDE} when $\eta\in(1,\infty)$.
\end{description}
Summarizing the last statements, our proof is totally completed.
\end{proof}
\begin{remark}\label{Rem-eta=1}
	By taking the suitably regular Sobolev data, $\eta=1$ is the important threshold for the Sobolev stability to the linearized Cauchy problem \eqref{Eq-Linear-Third-PDE}, in other words, the model is instable when $\eta\in(0,1)$ but stable when $\eta\in[1,\infty)$. We refer to Figure \ref{imgg}. This phenomenon has been found firstly by \cite{Bezerra-Carvalho-Santos=2022} in the semigroup setting.
\end{remark}
\begin{remark}
	Let us consider the critical index for the Gevrey data, which is an interesting topic in recent years, e.g. the $G^2$ initial data for the 2D Prandtl equation \cite{Gerard-Dormy=2010,Dietert-Gerard=2019}. According to the first statement of Proposition \ref{Prop-Stable}, we also may show the stability of Sobolev solution even when $\eta\in(0,1)$ with the suitable Gevrey-Sobolev data $v_j\in G^{\rho,s+\frac{2(2-j)}{3}\sigma}$ with $j=0,1,2$ carrying $0<\rho\leqslant \frac{3}{2\sigma}$ due to \eqref{Critical-Gevrey-data-01} and the fact that
	\begin{align*}
		\chi_{\extt}(\xi)\exp\left(c|\xi|^{\frac{2}{3}\sigma}t-c|\xi|^{\frac{1}{\rho}}t\right)\lesssim 1\ \ \mbox{only when}\ \ \rho\leqslant\frac{3}{2\sigma}.
	\end{align*}
	That is to say, the Sobolev solution to the linearized Cauchy problem \eqref{Eq-Linear-Third-PDE} with $\eta\in(0,1)$ and Gevrey-Sobolev data becomes instable again when the Gevrey index $\rho>\frac{3}{2\sigma}$, which means that $\rho=\frac{3}{2\sigma}$ is the critical index of the Gevrey-Sobolev data.
\end{remark}
\begin{remark}
	The Gevrey smoothing effect occurs when $\eta\in(1,\infty)$ from the Sobolev data to the Gevrey-Sobolev solution to the linearized Cauchy problem \eqref{Eq-Linear-Third-PDE}, in which $\sigma=\frac{3}{2}$ is the threshold to distinguish different degree of smoothing effect, namely, there exists a Gevrey solution when $\sigma\in(0,\frac{3}{2})$, an analytic solution when $\sigma=\frac{3}{2}$, and an ultra-analytic solution when $\sigma\in(\frac{3}{2},\infty)$.
\end{remark}
\subsection{Some sharp estimates and asymptotic profiles of solutions as $\eta\in(1,\infty)$}
$\ \ \ \ $In this subsection, we derive some sharp $L^2$ estimates of solutions to the linearized Cauchy problem \eqref{Eq-Linear-Third-PDE} with $L^2$ data or $L^2\cap L^1$ data, which will contribute to the proof of Theorem \ref{Thm-GESDS}. Before doing these, let us state some preliminaries.
\begin{lemma}\label{Lem-01}
	Let $n>-2s$ and $\sigma\in(0,\infty)$. The following estimates hold:
	\begin{align}\label{Su02}
		\left\|\chi_{\intt}(\xi)|\xi|^s\mathrm{e}^{-c|\xi|^{\frac{2}{3}\sigma}t}\right\|_{L^2}\lesssim(1+t)^{-\frac{3(2s+n)}{4\sigma}}.
	\end{align}
\end{lemma}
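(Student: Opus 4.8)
The plan is to estimate the $L^2$ norm directly by passing to polar coordinates in the Fourier variable. First I would write
\[
\left\|\chi_{\intt}(\xi)|\xi|^s\mathrm{e}^{-c|\xi|^{\frac{2}{3}\sigma}t}\right\|_{L^2}^2
=\int_{|\xi|\leqslant\varepsilon_0}|\xi|^{2s}\mathrm{e}^{-2c|\xi|^{\frac{2}{3}\sigma}t}\,\mathrm{d}\xi
\lesssim\int_0^{\varepsilon_0}r^{2s+n-1}\mathrm{e}^{-2cr^{\frac{2}{3}\sigma}t}\,\mathrm{d}r,
\]
where the implied constant absorbs the surface measure of the unit sphere $\mathbb{S}^{n-1}$. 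The hypothesis $n>-2s$ guarantees that the exponent $2s+n-1>-1$, so the integral converges at $r=0$ and the substitution below is legitimate.

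Next I would perform the change of variables $\omega=r^{\frac{2}{3}\sigma}t$, equivalently $r=(\omega/t)^{\frac{3}{2\sigma}}$, which gives $\mathrm{d}r=\frac{3}{2\sigma}t^{-\frac{3}{2\sigma}}\omega^{\frac{3}{2\sigma}-1}\,\mathrm{d}\omega$ and turns the integral into
\[
\int_0^{\varepsilon_0}r^{2s+n-1}\mathrm{e}^{-2cr^{\frac{2}{3}\sigma}t}\,\mathrm{d}r
=\frac{3}{2\sigma}\,t^{-\frac{3(2s+n)}{2\sigma}}\int_0^{\varepsilon_0^{\frac{2}{3}\sigma}t}\omega^{\frac{3(2s+n)}{2\sigma}-1}\mathrm{e}^{-2c\omega}\,\mathrm{d}\omega
\lesssim t^{-\frac{3(2s+n)}{2\sigma}},
\]
since the remaining $\omega$-integral is bounded by $\int_0^\infty\omega^{\frac{3(2s+n)}{2\sigma}-1}\mathrm{e}^{-2c\omega}\,\mathrm{d}\omega$, a finite Gamma-type constant (again using $\frac{3(2s+n)}{2\sigma}>0$, i.e. $n>-2s$). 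Taking square roots yields the bound $t^{-\frac{3(2s+n)}{4\sigma}}$, which is the claimed decay for large $t$.

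It remains to upgrade $t^{-\frac{3(2s+n)}{4\sigma}}$ to $(1+t)^{-\frac{3(2s+n)}{4\sigma}}$, i.e. to handle the region $t\in[0,1]$ where the pure power blows up (when $2s+n>0$) or is merely bounded. For $t\leqslant1$ one simply bounds $\mathrm{e}^{-2c|\xi|^{\frac{2}{3}\sigma}t}\leqslant1$ and notes that $\chi_{\intt}(\xi)|\xi|^s\in L^2$ on $\{|\xi|\leqslant\varepsilon_0\}$ precisely because $2s+n>0$, so the left-hand side is uniformly bounded there; since $(1+t)^{-\frac{3(2s+n)}{4\sigma}}\simeq1$ on $[0,1]$, the two regimes combine to give the estimate for all $t\geqslant0$. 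I do not anticipate a genuine obstacle here: the only subtlety is bookkeeping the hypothesis $n>-2s$ in two places — convergence of the radial integral at the origin and finiteness of the Gamma constant — and splitting into $t\leqslant1$ and $t\geqslant1$ to produce the Japanese-bracket-free factor $(1+t)$ rather than $t$.
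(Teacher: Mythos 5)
Your argument is correct and is essentially identical to the paper's proof: polar coordinates, the substitution $\omega=r^{\frac{2}{3}\sigma}t$ reducing the integral to a convergent Gamma-type constant times $t^{-\frac{3(2s+n)}{2\sigma}}$ for $t\geqslant1$, and a trivial bounded estimate on $[0,1]$ using $2s+n>0$. You merely spell out the Jacobian and the small-time case more explicitly than the paper does; no further changes are needed.
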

\begin{proof}
Applying polar coordinates and the change of variable $\omega=r^{\frac{2}{3}\sigma}t$, we directly obtain
\begin{align*}
\left\|\chi_{\intt}(\xi)|\xi|^s\mathrm{e}^{-c|\xi|^{\frac{2}{3}\sigma}t}\right\|_{L^2}^2&\lesssim\int_0^{\varepsilon_0}r^{2s+n-1}\mathrm{e}^{-2cr^{\frac{2}{3}\sigma}t}\mathrm{d}r\\
&\lesssim t^{-\frac{3(2s+n)}{2\sigma}}\int_0^{\infty}\omega^{\frac{3(2s+n-1)}{2\sigma}}\mathrm{e}^{-2c\omega}\mathrm{d}\omega^{\frac{3}{2\sigma}}\lesssim t^{-\frac{3(2s+n)}{2\sigma}}
\end{align*}
when $t\in[1,\infty)$ due to $2s+n-1>-1$, and bounded estimates hold for $t\in[0,1]$. Then, our proof is completed.
\end{proof}
\begin{lemma}\label{Lem-02}
	Let $n>\frac{4}{3}\sigma$ and $\sigma\in(0,\infty)$. The following estimates hold:
	\begin{align}
	\left\|\chi_{\intt}(\xi)|\xi|^{-\frac{4}{3}\sigma}\left(\mathrm{e}^{-|\xi|^{\frac{2}{3}\sigma}t}-\cos(C_{\eta}|\xi|^{\frac{2}{3}\sigma}t)\,\mathrm{e}^{-\frac{1}{2}(\eta-1)|\xi|^{\frac{2}{3}\sigma}t}\right)\right\|_{L^2}&\lesssim (1+t)^{-\frac{3n-8\sigma}{4\sigma}},\label{A1}\\
	\left\|\chi_{\intt}(\xi)|\xi|^{-\frac{4}{3}\sigma}\sin(C_{\eta}|\xi|^{\frac{2}{3}\sigma}t)\,\mathrm{e}^{-\frac{1}{2}(\eta-1)|\xi|^{\frac{2}{3}\sigma}t}\right\|_{L^2}&\lesssim (1+t)^{-\frac{3n-8\sigma}{4\sigma}},\label{A2}
	\end{align}
where $\eta\in(1,3)$ and $C_{\eta}>0$.
\end{lemma}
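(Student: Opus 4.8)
The plan is to reduce both estimates to an application of Lemma \ref{Lem-01} after a careful Taylor expansion of the bracketed expressions near $\xi=0$. The key observation is that although the factor $|\xi|^{-\frac{4}{3}\sigma}$ is singular at the origin, the quantities in parentheses vanish to sufficiently high order at $\xi=0$ to compensate. First I would fix $\eta\in(1,3)$, write $r=|\xi|$, and set $\theta:=\frac{1}{2}(\eta-1)r^{\frac{2}{3}\sigma}>0$ so that the exponent $-\frac{1}{2}(\eta-1)|\xi|^{\frac{2}{3}\sigma}t$ equals $-\theta t$; on the support of $\chi_{\intt}$ we have $r\le\varepsilon_0\ll1$, hence $r^{\frac{2}{3}\sigma}t\le t$ and, crucially, $r^{\frac{2}{3}\sigma}$ is small uniformly.

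For \eqref{A2}, the factor $\sin(C_\eta|\xi|^{\frac{2}{3}\sigma}t)$ is bounded by $\min\{1,C_\eta|\xi|^{\frac{2}{3}\sigma}t\}\lesssim |\xi|^{\frac{2}{3}\sigma}t$, which already kills one power of $|\xi|^{-\frac{2}{3}\sigma}$ and produces a harmless factor $t$; but this alone is not enough, so instead I would use the sharper bound $|\sin(C_\eta r^{\frac{2}{3}\sigma}t)|\lesssim r^{\frac{2}{3}\sigma}t\cdot\langle r^{\frac{2}{3}\sigma}t\rangle^{-1}$ together with the decaying exponential $\mathrm{e}^{-\theta t}$. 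Splitting the $r$-integral at the threshold $r^{\frac{2}{3}\sigma}t\simeq1$ — i.e. $r\simeq t^{-\frac{3}{2\sigma}}$ — and estimating $|\xi|^{-\frac{4}{3}\sigma}|\sin(\cdots)|\mathrm{e}^{-\theta t}$ by $|\xi|^{-\frac{2}{3}\sigma}t\,\mathrm{e}^{-ct r^{\frac{2}{3}\sigma}}$ on the low-frequency piece and by $|\xi|^{-\frac{4}{3}\sigma}\mathrm{e}^{-ctr^{\frac{2}{3}\sigma}}$ on the remaining piece, each contribution after the substitution $\omega=r^{\frac{2}{3}\sigma}t$ yields exactly the rate $(1+t)^{-\frac{3n-8\sigma}{4\sigma}}$; the condition $n>\frac{4}{3}\sigma$ guarantees the integrals converge at $r=0$. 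Alternatively, and more cleanly, one can write $|\xi|^{-\frac{4}{3}\sigma}\sin(C_\eta|\xi|^{\frac{2}{3}\sigma}t)\,\mathrm{e}^{-\theta t}=t\,|\xi|^{-\frac{2}{3}\sigma}\,\mathrm{sinc}(C_\eta|\xi|^{\frac{2}{3}\sigma}t)\,\mathrm{e}^{-\theta t}$ and absorb the smooth bounded factor $\mathrm{sinc}$, then apply Lemma \ref{Lem-01} with $s=-\frac{2}{3}\sigma$ and an extra $t$, checking $n>\frac{4}{3}\sigma$ and that $t\cdot(1+t)^{-\frac{3(n-\frac{4}{3}\sigma)}{4\sigma}}=t\cdot(1+t)^{-\frac{3n-4\sigma}{4\sigma}}\lesssim(1+t)^{-\frac{3n-8\sigma}{4\sigma}}$.

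For \eqref{A1}, the difference $\mathrm{e}^{-r^{\frac{2}{3}\sigma}t}-\cos(C_\eta r^{\frac{2}{3}\sigma}t)\,\mathrm{e}^{-\frac{1}{2}(\eta-1)r^{\frac{2}{3}\sigma}t}$ does not obviously vanish at $\xi=0$ — indeed each term tends to $1$ — so the naive approach fails and here is where I expect the main obstacle. The resolution is that one must combine \eqref{A1} and \eqref{A2} by recalling from the representation \eqref{B1} that these two kernels appear together in the coefficient of $\widehat v_2$, and that the full coefficient is a \emph{solution operator} which must be regular; concretely, the combination $\frac{1}{(3-\eta)|\xi|^{\frac{4}{3}\sigma}}\bigl(\mathrm{e}^{-|\xi|^{\frac{2}{3}\sigma}t}-(\cdots)\bigr)$ should be bounded at $\xi=0$ because the bracket is $O(|\xi|^{\frac{4}{3}\sigma})$ near the origin. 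So the right move is to Taylor-expand both exponentials and the cosine: $\mathrm{e}^{-r^{\frac{2}{3}\sigma}t}=1-r^{\frac{2}{3}\sigma}t+\frac12r^{\frac{4}{3}\sigma}t^2+O((r^{\frac{2}{3}\sigma}t)^3)$, $\cos(C_\eta r^{\frac{2}{3}\sigma}t)=1-\frac12C_\eta^2r^{\frac{4}{3}\sigma}t^2+O((r^{\frac{2}{3}\sigma}t)^4)$, $\mathrm{e}^{-\frac{1}{2}(\eta-1)r^{\frac{2}{3}\sigma}t}=1-\frac{1}{2}(\eta-1)r^{\frac{2}{3}\sigma}t+\cdots$; using the identity $\lambda_1+\lambda_2+\lambda_3=-\eta|\xi|^{\frac{2}{3}\sigma}$ one checks the $O(1)$ and $O(r^{\frac{2}{3}\sigma}t)$ terms cancel, leaving the bracket bounded by $C(r^{\frac{2}{3}\sigma}t)^2\langle r^{\frac23\sigma}t\rangle^{-2}$ after also using the decay $\mathrm{e}^{-ctr^{\frac23\sigma}}$ for the tail. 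Then $|\xi|^{-\frac{4}{3}\sigma}\cdot(r^{\frac{2}{3}\sigma}t)^2 = t^2$, the two singular powers cancel exactly, and what remains is $t^2\,\mathrm{e}^{-ctr^{\frac{2}{3}\sigma}}$-type integrand on $\ml{Z}_{\intt}(\varepsilon_0)$; splitting at $r\simeq t^{-\frac{3}{2\sigma}}$ and substituting $\omega=r^{\frac{2}{3}\sigma}t$ delivers $(1+t)^{-\frac{3n-8\sigma}{4\sigma}}$ once $n>\frac{4}{3}\sigma$ is used for convergence at $r=0$, with the bounded case $t\in[0,1]$ handled trivially since the integrand is then bounded and the domain finite.
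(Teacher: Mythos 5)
Your treatment of \eqref{A2} is correct and is essentially the paper's argument: write the kernel as $t\,|\xi|^{-\frac{2}{3}\sigma}$ times the bounded factor $\sin(C_\eta|\xi|^{\frac{2}{3}\sigma}t)/(|\xi|^{\frac{2}{3}\sigma}t)$ and the decaying exponential, apply Lemma \ref{Lem-01} with $s=-\frac{2}{3}\sigma$ (this is exactly where $n>\frac{4}{3}\sigma$ enters), and absorb the extra factor $t$ via $t(1+t)^{-\frac{3n-4\sigma}{4\sigma}}\lesssim(1+t)^{-\frac{3n-8\sigma}{4\sigma}}$.

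Your proof of \eqref{A1}, however, rests on a false cancellation. Set $y:=|\xi|^{\frac{2}{3}\sigma}t$ and $g(y):=\mathrm{e}^{-y}-\cos(C_\eta y)\,\mathrm{e}^{-\frac{1}{2}(\eta-1)y}$. The Taylor expansion you write down gives $g(y)=\frac{\eta-3}{2}\,y+O(y^2)$, and for $\eta\in(1,3)$ the linear coefficient $\frac{\eta-3}{2}$ is \emph{not} zero, so the $O(y)$ terms do not cancel. The identity $\lambda_1+\lambda_2+\lambda_3=-\eta|\xi|^{\frac{2}{3}\sigma}$ does not rescue this: the linear terms cancel only in the full coefficient of $\widehat{v}_2$ in \eqref{B1}, i.e.\ in the specific combination $\frac{-1}{\eta-3}\,g(y)+\frac{1}{2C_\eta}\sin(C_\eta y)\,\mathrm{e}^{-\frac{1}{2}(\eta-1)y}$, whereas the lemma estimates the two summands \emph{separately}. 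Consequently your bound $|g(y)|\lesssim y^2\langle y\rangle^{-2}$ and the ensuing exact cancellation $|\xi|^{-\frac{4}{3}\sigma}\,y^2=t^2$ are wrong. Two internal consistency checks expose this: if your claim held, \eqref{A1} would follow for every $n>0$ (only Lemma \ref{Lem-01} with $s=0$ would be needed), making the hypothesis $n>\frac{4}{3}\sigma$ superfluous; and it would contradict Lemma \ref{Lem-03}, whose lower bound is built precisely on $\lim_{y\downarrow0}y^{-1}g(y)=\frac{1}{2}(\eta-3)<0$.

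The repair is simpler than the second-order cancellation you were hunting for: first-order vanishing of $g$ suffices because $n>\frac{4}{3}\sigma$ is assumed. Using $1-\cos z=2\sin^2\frac{z}{2}$ one writes $g(y)=\mathrm{e}^{-\frac{1}{2}(\eta-1)y}\big(\mathrm{e}^{\frac{1}{2}(\eta-3)y}-1\big)+2\sin^2\big(\tfrac{C_\eta}{2}y\big)\mathrm{e}^{-\frac{1}{2}(\eta-1)y}$; the first piece is $O(y\,\mathrm{e}^{-cy})$ by the mean value theorem and the second is $O(y^2\mathrm{e}^{-cy})$ by $|\sin z|\lesssim|z|$. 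Multiplying by $|\xi|^{-\frac{4}{3}\sigma}$ yields $t\,|\xi|^{-\frac{2}{3}\sigma}\mathrm{e}^{-c|\xi|^{\frac{2}{3}\sigma}t}+t^2\,\mathrm{e}^{-c|\xi|^{\frac{2}{3}\sigma}t}$, and Lemma \ref{Lem-01} with $s=-\frac{2}{3}\sigma$ and $s=0$ gives $(1+t)^{-\frac{3n-8\sigma}{4\sigma}}$ for both contributions. This is the paper's route, and it is the step your proposal is missing.
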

\begin{proof}
According to $1-\cos y=2\sin^2\frac{y}{2}$, we notice
\begin{align*}
&|\xi|^{-\frac{4}{3}\sigma}\left(\mathrm{e}^{-|\xi|^{\frac{2}{3}\sigma}t}-\cos(C_{\eta}|\xi|^{\frac{2}{3}\sigma}t)\,\mathrm{e}^{-\frac{1}{2}(\eta-1)|\xi|^{\frac{2}{3}\sigma}t}\right)\\
&\qquad=|\xi|^{-\frac{4}{3}\sigma}\mathrm{e}^{-\frac{1}{2}(\eta-1)|\xi|^{\frac{2}{3}\sigma}t}\left(\mathrm{e}^{\frac{1}{2}(\eta-3)|\xi|^{\frac{2}{3}\sigma}t}-1\right)+2|\xi|^{-\frac{4}{3}\sigma}\sin^2\left(\frac{C_{\eta}}{2}|\xi|^{\frac{2}{3}\sigma }t\right)\mathrm{e}^{-\frac{1}{2}(\eta-1)|\xi|^{\frac{2}{3}\sigma}t}\\
&\qquad=\frac{(\eta-3)t}{2|\xi|^{\frac{2}{3}\sigma}}\mathrm{e}^{-\frac{1}{2}(\eta-1)|\xi|^{\frac{2}{3}\sigma}t}\int_0^1\mathrm{e}^{-\frac{1}{2}(3-\eta)|\xi|^{\frac{2}{3}\sigma} t\tau}\mathrm{d}\tau+2t^2\left|\frac{\sin(\frac{C_{\eta}}{2}|\xi|^{\frac{2}{3}\sigma}t)}{|\xi|^{\frac{2}{3}\sigma}t}\right|^2\mathrm{e}^{-\frac{1}{2}(\eta-1)|\xi|^{\frac{2}{3}\sigma}t}.
\end{align*}
Due to the fact that $|\sin y|\lesssim |y|$, it yields
\begin{align*}
\mbox{LHS of }\eqref{A1}&\lesssim t\left\|\chi_{\intt}(\xi)|\xi|^{-\frac{2}{3}\sigma}\mathrm{e}^{-c|\xi|^{\frac{2}{3}\sigma}t}\right\|_{L^2}+t^2\left\|\chi_{\intt}(\xi)\mathrm{e}^{-c|\xi|^{\frac{2}{3}\sigma}t}\right\|_{L^2}\lesssim (1+t)^{-\frac{3n-8\sigma}{4\sigma}}
\end{align*}
with $n>\frac{4}{3}\sigma$, where we applied Lemma \ref{Lem-01} with $s=-\frac{2}{3}\sigma$ and $s=0$. Similarly,
\begin{align*}
\mbox{LHS of }\eqref{A2}&\lesssim t\left\|\chi_{\intt}(\xi)|\xi|^{-\frac{2}{3}\sigma}\frac{\sin(C_{\eta}|\xi|^{\frac{2}{3}\sigma}t)}{|\xi|^{\frac{2}{3}\sigma}t}\mathrm{e}^{-c|\xi|^{\frac{2}{3}\sigma}t}\right\|_{L^2}\lesssim(1+t)^{-\frac{3n-8\sigma}{4\sigma}},
\end{align*}
again with $n>\frac{4}{3}\sigma$. The proof is finished.
\end{proof}

\begin{lemma}\label{Lem-03}
Under the corresponding conditions to those of Lemma \ref{Lem-01} and Lemma \ref{Lem-02}, respectively, all estimates from 
the above mentioned are sharp for large-time $t\gg1$.
\end{lemma}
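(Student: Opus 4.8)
The plan is to produce matching lower bounds for $t\gg1$ by localising each Fourier integral to the shrinking ball $\{|\xi|\leqslant\delta\,t^{-\frac{3}{2\sigma}}\}$ with a small, $t$-independent constant $\delta>0$. On such a ball the dimensionless scalar $z:=|\xi|^{\frac{2}{3}\sigma}t$ satisfies $z\leqslant\delta^{\frac{2}{3}\sigma}$, so every exponential factor is bounded above and below by constants while the oscillatory factors admit convergent Taylor expansions in $z$ whose remainders are controlled by powers of $\delta$; the entire time-decay is then produced by the shrinking volume $\big|\{|\xi|\leqslant\delta t^{-\frac{3}{2\sigma}}\}\big|\simeq t^{-\frac{3n}{2\sigma}}$ weighted by the relevant $|\xi|$-power. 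For $t$ large enough this ball lies inside $\{\chi_{\intt}\equiv1\}$ (since $\chi_{\extt}$ is supported in $\{|\xi|\geqslant\varepsilon_0/2\}$), so the cutoff can be discarded from below.

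For \eqref{Su02} I would note that $\mathrm{e}^{-c|\xi|^{\frac{2}{3}\sigma}t}\geqslant\mathrm{e}^{-c\delta^{\frac{2}{3}\sigma}}\gtrsim1$ on $\{|\xi|\leqslant\delta t^{-\frac{3}{2\sigma}}\}$, hence by polar coordinates
\begin{align*}
\left\|\chi_{\intt}(\xi)|\xi|^s\mathrm{e}^{-c|\xi|^{\frac{2}{3}\sigma}t}\right\|_{L^2}^2\gtrsim\int_0^{\delta t^{-\frac{3}{2\sigma}}}r^{2s+n-1}\,\mathrm{d}r\simeq t^{-\frac{3(2s+n)}{2\sigma}},
\end{align*}
where the radial integral converges at $r=0$ precisely because $2s+n>0$, i.e. $n>-2s$, the hypothesis of Lemma \ref{Lem-01}. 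Taking square roots gives the lower bound $\gtrsim t^{-\frac{3(2s+n)}{4\sigma}}$, which matches \eqref{Su02}, so the estimate is sharp for $t\gg1$.

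For \eqref{A1} and \eqref{A2} I would Taylor expand in $z$. Using $\mathrm{e}^{-z}=1-z+O(z^2)$ and $\cos(C_{\eta}z)\,\mathrm{e}^{-\frac{1}{2}(\eta-1)z}=1-\frac{\eta-1}{2}z+O(z^2)$, the difference $\mathrm{e}^{-|\xi|^{\frac{2}{3}\sigma}t}-\cos(C_{\eta}|\xi|^{\frac{2}{3}\sigma}t)\,\mathrm{e}^{-\frac{1}{2}(\eta-1)|\xi|^{\frac{2}{3}\sigma}t}$ equals $\frac{\eta-3}{2}z+O(z^2)$, with nonzero leading coefficient since $\eta\in(1,3)$; similarly $\sin(C_{\eta}|\xi|^{\frac{2}{3}\sigma}t)\,\mathrm{e}^{-\frac{1}{2}(\eta-1)|\xi|^{\frac{2}{3}\sigma}t}=C_{\eta}z+O(z^2)$ with $C_{\eta}>0$. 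Dividing by $|\xi|^{\frac{4}{3}\sigma}$ turns the $O(z^2)$ remainder into a quantity bounded by $C\,z\,|\xi|^{-\frac{2}{3}\sigma}t\leqslant C\delta^{\frac{2}{3}\sigma}|\xi|^{-\frac{2}{3}\sigma}t$ on the localisation ball, i.e. at most $C\delta^{\frac{2}{3}\sigma}$ times the linear term $\simeq|\xi|^{-\frac{2}{3}\sigma}t$; choosing $\delta$ small makes it absorbable, so the modulus of either integrand is $\gtrsim|\xi|^{-\frac{2}{3}\sigma}t$ there. Hence, again by polar coordinates,
\begin{align*}
\mbox{LHS of }\eqref{A1},\ \mbox{LHS of }\eqref{A2}\ \gtrsim\ t\left(\int_0^{\delta t^{-\frac{3}{2\sigma}}}r^{\,n-\frac{4}{3}\sigma-1}\,\mathrm{d}r\right)^{\!1/2}\simeq t\cdot t^{-\frac{3n-4\sigma}{4\sigma}}=t^{-\frac{3n-8\sigma}{4\sigma}},
\end{align*}
the radial integral converging at $r=0$ exactly because $n>\frac{4}{3}\sigma$, the hypothesis of Lemma \ref{Lem-02}. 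This matches the upper bounds in \eqref{A1}--\eqref{A2}, completing the sharpness.

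The only delicate point I anticipate is the absorption step for \eqref{A1}--\eqref{A2}: one must verify that the Taylor remainder is a genuine lower-order perturbation of the linear-in-$z$ term \emph{uniformly} across the localisation ball, which is exactly what forces the ball to shrink at the symbol-dictated rate $t^{-\frac{3}{2\sigma}}$ and requires fixing $\delta$ before letting $t\to\infty$. Everything else is the same scaling computation in polar coordinates as in the proofs of Lemmas \ref{Lem-01} and \ref{Lem-02}, now performed in the reverse direction to extract lower bounds.
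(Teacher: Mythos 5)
Your proposal is correct and is essentially the paper's own argument in different notation: the paper substitutes $\omega=r^{\frac{2}{3}\sigma}t$ and restricts to $\omega\in(0,\varepsilon_0^*)$ using $\lim_{\omega\downarrow0}\omega^{-1}\big(\mathrm{e}^{-\omega}-\cos(C_{\eta}\omega)\,\mathrm{e}^{-\frac{1}{2}(\eta-1)\omega}\big)=\frac{1}{2}(\eta-3)\neq0$, which is exactly your localisation to $\{|\xi|\leqslant\delta t^{-\frac{3}{2\sigma}}\}$ with the Taylor-expansion/absorption step, and both arguments invoke $2s+n>0$ and $n>\frac{4}{3}\sigma$ for convergence at the origin. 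No gaps.
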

\begin{proof}
The upper bounds estimates have been derived in Lemma \ref{Lem-01} as well as Lemma \ref{Lem-02}. For this reason, we just need to get the lower bounds estimates with the same time-dependent coefficients as the upper bounds when $t\gg1$. Again with the variable $\omega=r^{\frac{2}{3}\sigma}t$, we estimate
\begin{align*}
[\mbox{LHS of }\eqref{Su02}]^2\gtrsim t^{-\frac{3(2s+n)}{2\sigma}}\int_0^{\varepsilon_0^{\frac{2}{3}\sigma}t}\omega^{\frac{3(2s+n)}{2\sigma}-1}\,\mathrm{e}^{-2c\omega}\mathrm{d}\omega\gtrsim t^{-\frac{3(2s+n)}{2\sigma}}
\end{align*}
for large-time $t\gg1$ such that $\varepsilon_0^{\frac{2}{3}\sigma}t\geqslant 1$, where we shrink the domain $(0,\varepsilon_0^{\frac{2}{3}\sigma}t)$ into $(0,1)$ and used $2s+n>0$ from the assumption of Lemma \ref{Lem-01}. Similarly, we know
\begin{align*}
[\mbox{LHS of } \eqref{A1}]^2&\gtrsim\int_0^{\varepsilon_0}r^{-\frac{8}{3}\sigma+n-1}\left(\mathrm{e}^{-r^{\frac{2}{3}\sigma}t}-\cos(C_{\eta}r^{\frac{2}{3}\sigma}t)\,\mathrm{e}^{-\frac{1}{2}(\eta-1)r^{\frac{2}{3}\sigma}t}\right)^2\mathrm{d}r\\
&\gtrsim t^{-\frac{3n-8\sigma}{2\sigma}}\int_0^{\varepsilon_0^{\frac{2}{3}\sigma}t}\omega^{\frac{3n-4\sigma}{2\sigma}-1}\omega^{-2}\left(\mathrm{e}^{-\omega}-\cos(C_{\eta}\,\omega)\,\mathrm{e}^{-\frac{1}{2}(\eta-1)\omega}\right)^2\mathrm{d}\omega.
\end{align*}
Due to the fact that
\begin{align*}
\lim\limits_{\omega\downarrow 0}\omega^{-1}\left(\mathrm{e}^{-\omega}-\cos(C_{\eta}\,\omega)\,\mathrm{e}^{-\frac{1}{2}(\eta-1)\omega}\right)=\frac{1}{2}(\eta-3)<0,
\end{align*}
there exists $\varepsilon_0^*>0$ with $0<\omega<\varepsilon_0^*<\varepsilon_0^{\frac{2}{3}\sigma}t$ for $t\gg1$ such that
\begin{align*}
[\mbox{LHS of } \eqref{A1}]^2&\gtrsim t^{-\frac{3n-8\sigma}{2\sigma}}\int_0^{\varepsilon_0^*}\omega^{\frac{3n-4\sigma}{2\sigma}-1}\mathrm{d}\omega \gtrsim t^{-\frac{3n-8\sigma}{2\sigma}},
\end{align*}
where we employed our assumption $3n-4\sigma>0$ in Lemma \ref{Lem-02}. Finally, the same idea leads to
\begin{align*}
[\mbox{LHS of } \eqref{A2}]^2&\gtrsim t^{-\frac{3n-8\sigma}{2\sigma}}\int_0^{\varepsilon_0^{\frac{2}{3}\sigma}t}\omega^{\frac{3n-4\sigma}{2\sigma}-1}\left|\frac{\sin(C_{\eta}\,\omega)}{C_{\eta}\,\omega}\right|^2\mathrm{e}^{-2c\omega}\mathrm{d}\omega \gtrsim t^{-\frac{3n-8\sigma}{2\sigma}}
\end{align*}
as $t\gg1$ and $3n-4\sigma>0$. Therefore, our proof is finished.
\end{proof}

Because of the representations of solutions for different size of $\eta\in(1,\infty)$, we next will state some sharp estimates and asymptotic profiles of solutions in the cases $\eta\in(1,3)$,   $\eta\in(3,\infty)$ and $\eta=3$, respectively. We postpone the proof of sharpness for these estimates in Proposition \ref{Prop-sharpness-linear}. Let us recall the data space $\ml{B}_{\sigma}$ introduced in \eqref{Data-space-D}. We first study the case $\eta\in(1,3)$.
\begin{prop}\label{Prop-(1,3)}
	Let $n\geqslant1$ and $n>\frac{4}{3}\sigma$ with $\sigma\in(0,\infty)$. Let us consider the linearized Cauchy problem \eqref{Eq-Linear-Third-PDE} with $\eta\in(1,3)$ and $(v_0,v_1,v_2)\in\ml{B}_{\sigma}$. The solution fulfills the following estimates:
	\begin{align}
	\|v(t,\cdot)\|_{L^2}&\lesssim (1+t)^{-\frac{3n-8\sigma}{4\sigma}}\|(v_0,v_1,v_2)\|_{\ml{B}_{\sigma}},\label{C(1,3),L2L1-L2}\\
	\|v(t,\cdot)\|_{\dot{H}^{\frac{4}{3}\sigma}}&\lesssim (1+t)^{-\frac{3n}{4\sigma}}\|(v_0,v_1,v_2)\|_{\ml{B}_{\sigma}}.\label{C(1,3),HsL1-L2}
	\end{align}
	Furthermore, the asymptotic profile  when $\eta\in(1,3)$ is described by $w_{(1,3)}=w_{(1,3)}(t,x)$ such that
	\begin{align*}
	w_{(1,3)}(t,x)&:=\ml{F}_{\xi\to x}^{-1}\left[|\xi|^{-\frac{4}{3}\sigma}\left(-\mathrm{e}^{-|\xi|^{\frac{2}{3}\sigma}t}+\cos(C_{\eta}|\xi|^{\frac{2}{3}\sigma}t)\,\mathrm{e}^{-\frac{1}{2}(\eta-1)|\xi|^{\frac{2}{3}\sigma}t}\right)\right]\frac{v_2(x)}{\eta-3}\\
	&\ \quad+\ml{F}^{-1}_{\xi\to x}\left[|\xi|^{-\frac{4}{3}\sigma}\sin(C_{\eta}|\xi|^{\frac{2}{3}\sigma}t)\,\mathrm{e}^{-\frac{1}{2}(\eta-1)|\xi|^{\frac{2}{3}\sigma}t}\right]\frac{v_2(x)}{2C_{\eta}}
	\end{align*}
	in the sense of refined estimates
	\begin{align}\label{Refine(1,3)-1}
	\|v(t,\cdot)-w_{(1,3)}(t,\cdot)\|_{L^2}&\lesssim (1+t)^{-\frac{3n-4\sigma}{4\sigma}}\|(v_0,v_1,v_2)\|_{\ml{B}_{\sigma}},\\
		\|v(t,\cdot)-w_{(1,3)}(t,\cdot)\|_{\dot{H}^{\frac{4}{3}\sigma}}&\lesssim (1+t)^{-\frac{3n+4\sigma}{4\sigma}}\|(v_0,v_1,v_2)\|_{\ml{B}_{\sigma}}.\label{Refine(1,3)-2}
	\end{align}
	Assuming $v_0\equiv0\equiv v_1$, the solution fulfills the following bounded estimate:
	\begin{align}\label{C(1,3),L2-L2}
	\|v(t,\cdot)\|_{\dot{H}^{\frac{4}{3}\sigma}}\lesssim\|v_2\|_{L^2}.
	\end{align}
\end{prop}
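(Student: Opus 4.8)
\textbf{Proof proposal for Proposition \ref{Prop-(1,3)}.}

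The plan is to estimate each piece of the Fourier-space representation \eqref{B1} separately on the low-frequency zone $\mathcal{Z}_{\intt}(\varepsilon_0)$ and the high-frequency zone $\mathcal{Z}_{\extt}(\varepsilon_0)$, then combine them via Plancherel's theorem. For the high-frequency part, the exponential decay $\chi_{\extt}(\xi)\,\mathrm{e}^{-c|\xi|^{\frac{2}{3}\sigma}t}$ from \eqref{Critical-Gevrey-data-01} (case $\eta\in(1,3)$) immediately absorbs any polynomial weight $\langle\xi\rangle^{\frac{4}{3}\sigma}$ and yields an arbitrarily fast decaying contribution controlled by $\|(v_0,v_1)\|_{H^{\frac{4}{3}\sigma}\times H^{\frac{2}{3}\sigma}}+\|v_2\|_{L^2}$; this easily beats the stated polynomial rates. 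The bulk of the work is therefore in the low-frequency zone.

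For the low-frequency $L^2$ estimate \eqref{C(1,3),L2L1-L2}, I would look at the three lines of \eqref{B1}. The terms multiplying $\widehat{v}_0$ carry a factor $|\xi|^{\frac{4}{3}\sigma}/|\xi|^{\frac{4}{3}\sigma}=O(1)$, those multiplying $\widehat{v}_1$ carry $|\xi|^{\frac{2}{3}\sigma}/|\xi|^{\frac{4}{3}\sigma}=|\xi|^{-\frac{2}{3}\sigma}$, and those multiplying $\widehat{v}_2$ carry $|\xi|^{-\frac{4}{3}\sigma}$. Using $|\widehat{v}_j(\xi)|\lesssim\|v_j\|_{L^1}$ together with $|\widehat{v}_j(\xi)|\lesssim|\xi|^{\frac{2(2-j)}{3}\sigma}\|v_j\|_{H^{\frac{2(2-j)}{3}\sigma}}$ (to kill the negative powers of $|\xi|$ near the origin and legitimize the $L^1$ bound), every low-frequency piece is bounded in $L^2$ by $\|\chi_{\intt}(\xi)\,\mathrm{e}^{-c|\xi|^{\frac{2}{3}\sigma}t}\|_{L^2}\|(v_0,v_1,v_2)\|_{\mathcal{B}_\sigma}$ up to, for the $v_0$-term, the more delicate combination $\mathrm{e}^{-|\xi|^{\frac{2}{3}\sigma}t}-\cos(C_\eta|\xi|^{\frac{2}{3}\sigma}t)\mathrm{e}^{-\frac12(\eta-1)|\xi|^{\frac{2}{3}\sigma}t}$ and its sine analogue — but those are exactly Lemma \ref{Lem-02}, giving $(1+t)^{-\frac{3n-8\sigma}{4\sigma}}$; the generic pieces give the faster $(1+t)^{-\frac{3n}{4\sigma}}$ by Lemma \ref{Lem-01} with $s=0$, so the slowest rate $(1+t)^{-\frac{3n-8\sigma}{4\sigma}}$ dominates. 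The $\dot H^{\frac43\sigma}$ estimate \eqref{C(1,3),HsL1-L2} is the same computation with an extra $|\xi|^{\frac43\sigma}$, so the $v_0$-terms also become generic (weight $|\xi|^{\frac43\sigma}$, no cancellation needed) and Lemma \ref{Lem-01} with $s=\frac23\sigma$ (for the $v_1$-piece), $s=\frac43\sigma$ ($v_0$), $s=0$ ($v_2$) gives $(1+t)^{-\frac{3n}{4\sigma}}$.

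For the refined estimates, note that $w_{(1,3)}$ is precisely the part of \eqref{B1} proportional to $\widehat{v}_2/|\xi|^{\frac43\sigma}$ — i.e. the slowest-decaying contribution — so $v-w_{(1,3)}$ contains only the $v_0$- and $v_1$-terms plus the $L^2$-bound on $v_2$ via $|\widehat v_2|\lesssim|\xi|^{\frac43\sigma}\|v_2\|_{H^{\frac43\sigma}}\cdot|\xi|^{-\frac43\sigma}$... more carefully, one writes the $v_2$-contribution to $v-w_{(1,3)}$ only through the $L^2$-datum, gaining a factor $|\xi|^{\frac43\sigma}$ relative to the $L^1$-based bound, hence one extra factor $(1+t)^{-\frac{3\cdot\frac43\sigma}{2\sigma}\cdot\frac12}=(1+t)^{-1}$ — this is the mechanism behind the improvement from $-\frac{3n-8\sigma}{4\sigma}$ to $-\frac{3n-4\sigma}{4\sigma}$. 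I would make this precise by splitting each datum's bound into its $L^1$ part (on $\{|\xi|\le t^{-3/(2\sigma)}\}$, say) and its $L^2$ part (on the complement), which is the standard $(L^2\cap L^1)$–$L^2$ interpolation trick. Finally \eqref{C(1,3),L2-L2} is immediate: with $v_0\equiv v_1\equiv0$ only the $\widehat v_2$-terms survive, the low-frequency $\dot H^{\frac43\sigma}$ weight cancels the $|\xi|^{-\frac43\sigma}$ exactly, leaving bounded trigonometric–exponential factors times $\chi_{\intt}|\widehat v_2|$, hence $\lesssim\|v_2\|_{L^2}$, while the high-frequency part is controlled by $\|v_2\|_{L^2}$ as above.

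\textbf{Main obstacle.} The routine part is the zone splitting and the application of Lemmas \ref{Lem-01}–\ref{Lem-02}; the only genuinely delicate point is bookkeeping the cancellation in the $\widehat v_2$-term that produces the sharp refined rates \eqref{Refine(1,3)-1}–\eqref{Refine(1,3)-2}, i.e. verifying that subtracting exactly $w_{(1,3)}$ removes the worst low-frequency behavior and that the residual genuinely decays one power of $(1+t)^{1/2}$... faster — here one must be careful that the $v_0$- and $v_1$-terms already decay at least at the refined rate, which holds because their natural weights ($O(1)$ and $|\xi|^{-\frac23\sigma}$) are $|\xi|^{\frac23\sigma}$ better than the $v_2$-weight $|\xi|^{-\frac43\sigma}$, matching the claimed gap.
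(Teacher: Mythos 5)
Your proposal follows essentially the same route as the paper's proof: the representation \eqref{B1}, a low/high-frequency splitting, the Hausdorff--Young bound $\|\widehat{v}_j\|_{L^\infty}\lesssim\|v_j\|_{L^1}$, Lemma \ref{Lem-01} for the generic multipliers and Lemma \ref{Lem-02} for the singular $\widehat{v}_2$-multipliers, with the exponential factor $\chi_{\extt}(\xi)\,\mathrm{e}^{-c|\xi|^{\frac{2}{3}\sigma}t}$ absorbing the high frequencies. One small correction to your treatment of \eqref{Refine(1,3)-1}--\eqref{Refine(1,3)-2}: the $\widehat{v}_2$-contribution to $v-w_{(1,3)}$ cancels identically (since $\widehat{w}_{(1,3)}$ is exactly the $\widehat{v}_2$-part of \eqref{B1}), so no $L^1$/$L^2$ splitting of the $v_2$ datum is needed --- the full gain of $(1+t)^{-1}$ comes solely, as you correctly observe at the end, from the surviving $v_0$- and $v_1$-multipliers being at least $|\xi|^{\frac{2}{3}\sigma}$ less singular, so that Lemma \ref{Lem-01} with $s=-\frac{2}{3}\sigma$ (legitimate since $n>\frac{4}{3}\sigma$) yields the refined rate.
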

\begin{proof}
Recalling the representation \eqref{B1} as $\eta\in(1,3)$, we are able to derive
\begin{align*}
\|\chi_{\intt}(\xi)\widehat{v}(t,\xi)\|_{L^2}&\lesssim\left\|\chi_{\intt}(\xi)\,\mathrm{e}^{-c|\xi|^{\frac{2}{3}\sigma}t}\right\|_{L^2}\|\widehat{v}_0\|_{L^{\infty}}+\left\|\chi_{\intt}(\xi)|\xi|^{-\frac{2}{3}\sigma}\mathrm{e}^{-c|\xi|^{\frac{2}{3}\sigma}t}\right\|_{L^2}\|\widehat{v}_1\|_{L^{\infty}}\\
&\quad+\left\|\chi_{\intt}(\xi)|\xi|^{-\frac{4}{3}\sigma}\left(\mathrm{e}^{-|\xi|^{\frac{2}{3}\sigma}t}-\cos(C_{\eta}|\xi|^{\frac{2}{3}\sigma}t)\,\mathrm{e}^{-\frac{1}{2}(\eta-1)|\xi|^{\frac{2}{3}\sigma}t}\right)\right\|_{L^2}\|\widehat{v}_2\|_{L^{\infty}}\\
&\quad+\left\|\chi_{\intt}(\xi)|\xi|^{-\frac{4}{3}\sigma}\sin(C_{\eta}|\xi|^{\frac{2}{3}\sigma}t)\,\mathrm{e}^{-\frac{1}{2}(\eta-1)|\xi|^{\frac{2}{3}\sigma}t}\right\|_{L^2}\|\widehat{v}_2\|_{L^{\infty}}.
\end{align*}
With the aid of $s=0,-\frac{2}{3}\sigma$ in Lemma \ref{Lem-01}, \eqref{A1}-\eqref{A2} in Lemma \ref{Lem-02}, as well as the Hausdorff-Young inequality $\|\widehat{v}_j\|_{L^{\infty}}\lesssim\|v_j\|_{L^1}$ with $j=0,1,2$, one may estimate
\begin{align*}
\|\chi_{\intt}(\xi)\widehat{v}(t,\xi)\|_{L^2}\lesssim (1+t)^{-\frac{3n-8\sigma}{4\sigma}}\|(v_0,v_1,v_2)\|_{L^1\times L^1\times L^1},
\end{align*}
with $n>\frac{4}{3}\sigma$. For another, it is clear that
\begin{align*}
\left\|\chi_{\extt}(\xi)\widehat{v}(t,\xi)\right\|_{L^2}&\lesssim\left\|\chi_{\extt}(\xi)\,\mathrm{e}^{-c|\xi|^{\frac{2}{3}\sigma}t}\left(\widehat{v}_0(\xi)+\langle\xi\rangle^{-\frac{2}{3}\sigma}\widehat{v}_1(\xi)+\langle\xi\rangle^{-\frac{4}{3}\sigma}\widehat{v}_2(\xi)\right)\right\|_{L^2}\\
&\lesssim\mathrm{e}^{-ct}\|(v_0,v_1,v_2)\|_{L^2\times L^2\times L^2}.
\end{align*}
An application of the Parseval equality associated with the last two estimates completes the proof of \eqref{C(1,3),L2L1-L2}. Concerning the desired estimate \eqref{C(1,3),HsL1-L2}, we follow the analogous method as the above one to get
\begin{align*}
\|v(t,\cdot)\|_{\dot{H}^{\frac{4}{3}\sigma}}&\lesssim\left\|\chi_{\intt}(\xi)\,\mathrm{e}^{-c|\xi|^{\frac{2}{3}\sigma}t}\right\|_{L^2}\|(v_0,v_1,v_2)\|_{L^1\times L^1\times L^1}\\
&\quad+\left\|\chi_{\extt}(\xi)\,\mathrm{e}^{-c|\xi|^{\frac{2}{3}\sigma}t}\right\|_{L^{\infty}}\|(v_0,v_1,v_2)\|_{H^{\frac{4}{3}\sigma}\times H^{\frac{2}{3}\sigma}\times L^2}\\
&\lesssim (1+t)^{-\frac{3n}{4\sigma}}\|(v_0,v_1,v_2)\|_{L^1\times L^1\times L^1}+\mathrm{e}^{-ct}\|(v_0,v_1,v_2)\|_{H^{\frac{4}{3}\sigma}\times H^{\frac{2}{3}\sigma}\times L^2},
\end{align*}
where we applied Lemma \ref{Lem-01} with $s=0$. Furthermore, taking $\hat{v}_0\equiv0\equiv \hat{v}_1$, we can obtain
\begin{align*}
|\xi|^{\frac{4}{3}\sigma}|\widehat{v}|\lesssim \left(1+|\cos(C_{\eta}|\xi|^{\frac{2}{3}\sigma }t)|+|\sin(C_{\eta}|\xi|^{\frac{2}{3}\sigma }t)|\right)\mathrm{e}^{-c|\xi|^{\frac{2}{3}\sigma}t}|\widehat{v}_2|\lesssim |\widehat{v}_2|,
\end{align*}
which implies \eqref{C(1,3),L2-L2} immediately. Finally, benefited from the representation \eqref{B1}, we know
\begin{align*}
|\widehat{v}-\widehat{w}_{(1,3)}|\lesssim\mathrm{e}^{-c|\xi|^{\frac{2}{3}\sigma}t}\left(|\widehat{v}_0|+|\xi|^{-\frac{2}{3}\sigma}|\widehat{v}_1|\right).
\end{align*}
By applying Lemma \ref{Lem-01} with $s=-\frac{2}{3}\sigma$, the desired refined estimate can be proved directly.
\end{proof}
\begin{remark}
	Comparing \eqref{C(1,3),L2L1-L2}, \eqref{C(1,3),HsL1-L2} with \eqref{Refine(1,3)-1} and \eqref{Refine(1,3)-2}, respectively, we notice that the decay rates have been improved $(1+t)^{-1}$ by subtracting the function $w_{(1,3)}(t,\cdot)$ in the $L^2$ and $\dot{H}^{\frac{4}{3}\sigma}$ norms. In other words, the general diffusion-waves function $w_{(1,3)}$ is the asymptotic profile of $v$ to the linearized Cauchy problem \eqref{Eq-Linear-Third-PDE} with $\eta\in(1,3)$.
\end{remark}

Let us turn to the second situation $\eta\in(3,\infty)$, which will be separated into two results with respect to the size of dimensions.

\begin{prop}\label{Prop-(3,infty)}
	Let $n\geqslant 1$ and $n>\frac{8}{3}\sigma$ with $\sigma\in(0,\infty)$. Let us consider the linearized Cauchy problem \eqref{Eq-Linear-Third-PDE} with $\eta\in(3,\infty)$ and  $(v_0,v_1,v_2)\in\ml{B}_{\sigma}$. The solution fulfills the estimates \eqref{C(1,3),L2L1-L2}-\eqref{C(1,3),HsL1-L2}. Furthermore, the asymptotic profile  when $\eta\in(3,\infty)$ is described by $w_{(3,\infty)}=w_{(3,\infty)}(t,x)$ such that
	\begin{align*}
	w_{(3,\infty)}(t,x)&:=\ml{F}^{-1}_{\xi\to x}\left[\frac{\mathrm{e}^{-|\xi|^{\frac{2}{3}\sigma }t}}{(3-\eta)|\xi|^{\frac{4}{3}\sigma }}+\frac{\mathrm{e}^{\frac{1}{2}(1-\eta)|\xi|^{\frac{2}{3}\sigma }t}}{D_{\eta}|\xi|^{\frac{4}{3}\sigma }}\left(\frac{\mathrm{e}^{D_{\eta}|\xi|^{\frac{2}{3}\sigma }t}}{3-\eta+2D_{\eta}}-\frac{\mathrm{e}^{-D_{\eta}|\xi|^{\frac{2}{3}\sigma }t}}{3-\eta-2D_{\eta}}\right)\right]v_2(x)
	\end{align*}
	in the sense of refined estimates
	\begin{align*}
	\|v(t,\cdot)-w_{(3,\infty)}(t,\cdot)\|_{L^2}&\lesssim (1+t)^{-\frac{3n-4\sigma}{4\sigma}}\|(v_0,v_1,v_2)\|_{\ml{B}_{\sigma}},\\
	\|v(t,\cdot)-w_{(3,\infty)}(t,\cdot)\|_{\dot{H}^{\frac{4}{3}\sigma}}&\lesssim (1+t)^{-\frac{3n+4\sigma}{4\sigma}}\|(v_0,v_1,v_2)\|_{\ml{B}_{\sigma}}.
	\end{align*}
	Assuming $v_0\equiv0\equiv v_1$, the solution fulfills the estimate \eqref{C(1,3),L2-L2}.
\end{prop}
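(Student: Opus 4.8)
The plan is to mirror the proof of Proposition \ref{Prop-(1,3)}, replacing the oscillating representation \eqref{B1} by \eqref{B2} and exploiting that for $\eta\in(3,\infty)$ the three characteristic roots are real, pairwise distinct and negative: every exponential in \eqref{B2} has the form $\mathrm{e}^{\widetilde{\lambda}_j|\xi|^{\frac{2}{3}\sigma}t}$ with $\widetilde{\lambda}_j\in\{-1,\frac{1}{2}(1-\eta)+D_{\eta},\frac{1}{2}(1-\eta)-D_{\eta}\}\subset(-\infty,0)$ (using $D_{\eta}+\frac{1}{2}(1-\eta)<0$), and the denominators $3-\eta$ and $3-\eta\pm2D_{\eta}$ are all nonzero for $\eta>3$. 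First I would split $\widehat{v}=\chi_{\intt}(\xi)\widehat{v}+\chi_{\extt}(\xi)\widehat{v}$; the large-frequency part is controlled exactly as in Proposition \ref{Prop-(1,3)} through the third line of \eqref{Critical-Gevrey-data-01}, contributing an exponentially small term $\lesssim\mathrm{e}^{-ct}\|(v_0,v_1,v_2)\|_{H^{\frac{4}{3}\sigma}\times H^{\frac{2}{3}\sigma}\times L^2}$ in both the $L^2$ and $\dot{H}^{\frac{4}{3}\sigma}$ norms, which is negligible.

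The core is the small-frequency zone. Writing $\widehat{v}=\widehat{K}_0\widehat{v}_0+\widehat{K}_1\widehat{v}_1+\widehat{K}_2\widehat{v}_2$ with $\widehat{K}_j$ the three bracketed multipliers of \eqref{B2}, I would read off directly from \eqref{B2} and the sign of the roots the pointwise bounds
\begin{align*}
\chi_{\intt}(\xi)|\widehat{K}_0|&\lesssim \chi_{\intt}(\xi)\,\mathrm{e}^{-c|\xi|^{\frac{2}{3}\sigma}t},\\
\chi_{\intt}(\xi)|\widehat{K}_1|&\lesssim \chi_{\intt}(\xi)\,|\xi|^{-\frac{2}{3}\sigma}\mathrm{e}^{-c|\xi|^{\frac{2}{3}\sigma}t},\\
\chi_{\intt}(\xi)|\widehat{K}_2|&\lesssim \chi_{\intt}(\xi)\,|\xi|^{-\frac{4}{3}\sigma}\mathrm{e}^{-c|\xi|^{\frac{2}{3}\sigma}t},
\end{align*}
for some $c>0$. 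Combining these with the Hausdorff--Young inequality $\|\widehat{v}_j\|_{L^{\infty}}\lesssim\|v_j\|_{L^1}$ and Lemma \ref{Lem-01} (applied with $s=-\frac{2}{3}\sigma j$ for the $L^2$ norm and $s=\frac{4}{3}\sigma-\frac{2}{3}\sigma j$ for the $\dot{H}^{\frac{4}{3}\sigma}$ norm, $j=0,1,2$), I get $\|\chi_{\intt}(\xi)\widehat{v}\|_{L^2}\lesssim(1+t)^{-\frac{3n-8\sigma}{4\sigma}}\|(v_0,v_1,v_2)\|_{L^1\times L^1\times L^1}$ and $\||\xi|^{\frac{4}{3}\sigma}\chi_{\intt}(\xi)\widehat{v}\|_{L^2}\lesssim(1+t)^{-\frac{3n}{4\sigma}}\|(v_0,v_1,v_2)\|_{L^1\times L^1\times L^1}$, where in both cases the slowest rate is carried by the $\widehat{v}_2$-term; the choice $s=-\frac{4}{3}\sigma$ in Lemma \ref{Lem-01} for that term is exactly where the hypothesis $n>\frac{8}{3}\sigma$ enters. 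Parseval's identity together with the large-frequency estimate then yields \eqref{C(1,3),L2L1-L2} and \eqref{C(1,3),HsL1-L2}.

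For the asymptotic profile I would note that $\widehat{w}_{(3,\infty)}=\widehat{K}_2\widehat{v}_2$ is by construction precisely the $\widehat{v}_2$-contribution to \eqref{B2}, so $\widehat{v}-\widehat{w}_{(3,\infty)}=\widehat{K}_0\widehat{v}_0+\widehat{K}_1\widehat{v}_1$; re-running the previous estimates with only $j=0,1$ removes the $|\xi|^{-\frac{4}{3}\sigma}$ weight and improves the decay by one factor $(1+t)^{-1}$, giving the refined $L^2$ and $\dot{H}^{\frac{4}{3}\sigma}$ estimates with rates $(1+t)^{-\frac{3n-4\sigma}{4\sigma}}$ and $(1+t)^{-\frac{3n+4\sigma}{4\sigma}}$, respectively. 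Finally, if $v_0\equiv0\equiv v_1$ then $\widehat{v}=\widehat{K}_2\widehat{v}_2$, and since $|\xi|^{\frac{4}{3}\sigma}|\widehat{K}_2|\lesssim\mathrm{e}^{-c|\xi|^{\frac{2}{3}\sigma}t}\lesssim1$ holds on all of $\mb{R}^n$ (the low-frequency bound above on the support of $\chi_{\intt}$, and $|\xi|^{\frac{4}{3}\sigma}\langle\xi\rangle^{-\frac{4}{3}\sigma}\lesssim1$ with \eqref{Critical-Gevrey-data-01} on the support of $\chi_{\extt}$), Plancherel's identity gives $\|v(t,\cdot)\|_{\dot{H}^{\frac{4}{3}\sigma}}=\||\xi|^{\frac{4}{3}\sigma}\widehat{v}\|_{L^2}\lesssim\|\widehat{v}_2\|_{L^2}=\|v_2\|_{L^2}$, which is \eqref{C(1,3),L2-L2}. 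I expect the main obstacle to be the bookkeeping rather than any single hard inequality: one must recognize that the $|\xi|^{-\frac{4}{3}\sigma}$-weighted $\widehat{v}_2$-term simultaneously forces the threshold $n>\frac{8}{3}\sigma$ and governs the leading-order decay and the profile $w_{(3,\infty)}$, after which everything reduces to Lemma \ref{Lem-01}, the Hausdorff--Young inequality and Parseval's identity.
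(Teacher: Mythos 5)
Your proposal is correct and follows essentially the same route as the paper: the paper's proof simply records the pointwise bounds $|\widehat{v}|\lesssim\mathrm{e}^{-c|\xi|^{\frac{2}{3}\sigma}t}\big(|\widehat{v}_0|+|\xi|^{-\frac{2}{3}\sigma}|\widehat{v}_1|+|\xi|^{-\frac{4}{3}\sigma}|\widehat{v}_2|\big)$ and $|\widehat{v}-\widehat{w}_{(3,\infty)}|\lesssim\mathrm{e}^{-c|\xi|^{\frac{2}{3}\sigma}t}\big(|\widehat{v}_0|+|\xi|^{-\frac{2}{3}\sigma}|\widehat{v}_1|\big)$ from \eqref{B2} and then refers back to the argument of Proposition \ref{Prop-(1,3)}, which is exactly the zone splitting, Lemma \ref{Lem-01}, Hausdorff--Young and Parseval machinery you spell out. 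Your identification of the $|\xi|^{-\frac{4}{3}\sigma}\widehat{v}_2$ term as the source of both the threshold $n>\frac{8}{3}\sigma$ and the profile $w_{(3,\infty)}$ matches the paper's reasoning.
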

\begin{proof}
The representation \eqref{B2} shows the pointwise estimates
\begin{align}
|\widehat{v}|&\lesssim\mathrm{e}^{-c|\xi|^{\frac{2}{3}\sigma}t}\left(|\widehat{v}_0|+|\xi|^{-\frac{2}{3}\sigma}|\widehat{v}_1|+|\xi|^{-\frac{4}{3}\sigma}|\widehat{v}_2|\right),\label{NEW001}\\
|\widehat{v}-\widehat{w}_{(3,\infty)}|&\lesssim\mathrm{e}^{-c|\xi|^{\frac{2}{3}\sigma}t}\left(|\widehat{v}_0|+|\xi|^{-\frac{2}{3}\sigma}|\widehat{v}_1|\right).\notag
\end{align}
Following similar approaches to those of Proposition \ref{Prop-(1,3)}, we complete the proof.
\end{proof}
\begin{remark}
	We observe that the decay rates have been improved $(1+t)^{-1}$ by subtracting the function $w_{(3,\infty)}(t,\cdot)$ in the $L^2$ and $\dot{H}^{\frac{4}{3}\sigma}$ norms. Due to the Fourier multipliers $\exp(\alpha_0|\xi|^{\frac{2}{3}\sigma}t)$ with $\alpha_0=-1$ and $\alpha_0=\frac{1}{2}(1-\eta)\pm D_{\eta}<0$, the general diffusion function $w_{(3,\infty)}$ is the asymptotic profile of $v$ to the linearized Cauchy problem \eqref{Eq-Linear-Third-PDE} with $\eta\in(3,\infty)$.
\end{remark}

\begin{coro}\label{Coro-New}
Let $n\geqslant 1$ and $\frac{4}{3}\sigma<n\leqslant\frac{8}{3}\sigma$ with $\sigma\in(0,\infty)$. Let us consider the linearized Cauchy problem \eqref{Eq-Linear-Third-PDE} with $\eta\in(3,\infty)$ and  $(v_0,v_1,v_2)\in\ml{B}_{\sigma}$. The solution fulfills the following estimates:
\begin{align*}
\|v(t,\cdot)\|_{L^2}\lesssim\begin{cases}
(1+t)^{-\frac{3n-8\sigma}{4\sigma}}\|(v_0,v_1,v_2)\|_{\ml{B}_{\sigma}}&\mbox{if}\ \ \frac{4}{3}\sigma<n<\frac{8}{3}\sigma,\\
\ln(\mathrm{e}+t)\|(v_0,v_1,v_2)\|_{\ml{B}_{\sigma}}&\mbox{if}\ \ n=\frac{8}{3}\sigma.
\end{cases}
\end{align*}
\end{coro}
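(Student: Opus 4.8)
The plan is to follow the same Fourier-splitting strategy as in Proposition \ref{Prop-(1,3)} and Proposition \ref{Prop-(3,infty)}, but to track more carefully the interaction between the $|\xi|^{-\frac{4}{3}\sigma}$ singularity coming from the $\widehat{v}_2$-kernel and the dimension $n$ in the low-dimensional regime $\frac{4}{3}\sigma<n\leqslant\frac{8}{3}\sigma$. First I would recall from \eqref{B2} (or the equivalent pointwise bound \eqref{NEW001}) that in the interior zone
\[
\chi_{\intt}(\xi)|\widehat{v}(t,\xi)|\lesssim \chi_{\intt}(\xi)\,\mathrm{e}^{-c|\xi|^{\frac{2}{3}\sigma}t}\bigl(|\widehat{v}_0|+|\xi|^{-\frac{2}{3}\sigma}|\widehat{v}_1|+|\xi|^{-\frac{4}{3}\sigma}|\widehat{v}_2|\bigr),
\]
so that by the Hausdorff--Young inequality $\|\widehat{v}_j\|_{L^\infty}\lesssim\|v_j\|_{L^1}$ we get
\[
\|\chi_{\intt}(\xi)\widehat{v}(t,\xi)\|_{L^2}\lesssim\Bigl(\bigl\|\chi_{\intt}(\xi)\mathrm{e}^{-c|\xi|^{\frac{2}{3}\sigma}t}\bigr\|_{L^2}+\bigl\|\chi_{\intt}(\xi)|\xi|^{-\frac{2}{3}\sigma}\mathrm{e}^{-c|\xi|^{\frac{2}{3}\sigma}t}\bigr\|_{L^2}+\bigl\|\chi_{\intt}(\xi)|\xi|^{-\frac{4}{3}\sigma}\mathrm{e}^{-c|\xi|^{\frac{2}{3}\sigma}t}\bigr\|_{L^2}\Bigr)\|(v_0,v_1,v_2)\|_{L^1\times L^1\times L^1}.
\]
The first two norms are handled by Lemma \ref{Lem-01} with $s=0$ and $s=-\frac{2}{3}\sigma$, giving decay $(1+t)^{-\frac{3n}{4\sigma}}$ and $(1+t)^{-\frac{3n-4\sigma}{4\sigma}}$, respectively, both of which are faster than the claimed rate. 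The exterior zone is, as before, exponentially small thanks to $\chi_{\extt}(\xi)\mathrm{e}^{-c|\xi|^{\frac{2}{3}\sigma}t}$ and the $L^2$ part of the data. So everything reduces to estimating the third term, the one with the $|\xi|^{-\frac{4}{3}\sigma}$ weight.

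For that term I would again use polar coordinates and the substitution $\omega=r^{\frac{2}{3}\sigma}t$ exactly as in the proof of Lemma \ref{Lem-01}, obtaining
\[
\bigl\|\chi_{\intt}(\xi)|\xi|^{-\frac{4}{3}\sigma}\mathrm{e}^{-c|\xi|^{\frac{2}{3}\sigma}t}\bigr\|_{L^2}^2\lesssim\int_0^{\varepsilon_0}r^{-\frac{8}{3}\sigma+n-1}\mathrm{e}^{-2cr^{\frac{2}{3}\sigma}t}\,\mathrm{d}r\lesssim t^{-\frac{3(n-\frac{8}{3}\sigma)}{2\sigma}}\int_0^{\varepsilon_0^{\frac{2}{3}\sigma}t}\omega^{\frac{3(n-\frac{8}{3}\sigma)}{2\sigma}-1}\mathrm{e}^{-2c\omega}\,\mathrm{d}\omega.
\]
When $\frac{4}{3}\sigma<n<\frac{8}{3}\sigma$ the exponent $\frac{3(n-\frac{8}{3}\sigma)}{2\sigma}-1$ is negative, but it is still $>-1$ precisely because $n>\frac{4}{3}\sigma$; hence the $\omega$-integral converges near $\omega=0$ and, extending the upper limit to $+\infty$, is bounded by a constant, yielding the rate $(1+t)^{-\frac{3n-8\sigma}{4\sigma}}$ (after absorbing small times trivially). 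This is the genuinely new phenomenon: unlike in Proposition \ref{Prop-(3,infty)}, here $\frac{3n-8\sigma}{4\sigma}$ is \emph{negative}, so the bound is actually a growth rate, but it is still the sharp behaviour — this is why the hypothesis $n>\frac{8}{3}\sigma$ in Proposition \ref{Prop-(3,infty)} had to be relaxed and a separate statement given. The boundary case $n=\frac{8}{3}\sigma$ is where the exponent $\frac{3(n-\frac{8}{3}\sigma)}{2\sigma}-1$ equals $-1$, so the integral $\int_0^{\varepsilon_0^{\frac{2}{3}\sigma}t}\omega^{-1}\mathrm{e}^{-2c\omega}\,\mathrm{d}\omega$ diverges logarithmically at the origin; cutting it as $\int_0^1+\int_1^{\varepsilon_0^{\frac{2}{3}\sigma}t}$ and bounding $\mathrm{e}^{-2c\omega}\leqslant 1$ on $[1,\varepsilon_0^{\frac{2}{3}\sigma}t]$ produces the $\ln(\mathrm{e}+t)$ factor, with the $t^{0}$ prefactor since $n=\frac{8}{3}\sigma$ makes $t^{-\frac{3(n-\frac{8}{3}\sigma)}{2\sigma}}=1$.

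The main obstacle I anticipate is purely bookkeeping: one has to be careful that the $|\xi|^{-\frac{4}{3}\sigma}$-weighted term really does dominate, i.e. that the $(1+t)^{-\frac{3n}{4\sigma}}$ and $(1+t)^{-\frac{3n-4\sigma}{4\sigma}}$ contributions from the $v_0$ and $v_1$ data are swallowed by $(1+t)^{-\frac{3n-8\sigma}{4\sigma}}$ — which holds because $\frac{3n}{4\sigma}\geqslant\frac{3n-4\sigma}{4\sigma}\geqslant\frac{3n-8\sigma}{4\sigma}$ for all $n$, so there is nothing subtle. One should also note that no "refined asymptotic profile" statement is claimed in this corollary, consistent with the fact that here the leading $\widehat{v}_2$-term does not decay and subtracting it would not improve anything structurally; the corollary is deliberately stated only as an upper bound. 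Finally, the Parseval equality transfers the Fourier-side bounds to $\|v(t,\cdot)\|_{L^2}$, completing the proof.
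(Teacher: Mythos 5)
There is a genuine gap, and it is fatal to the route you chose. Your entire argument reduces to the bound
\begin{align*}
\left\|\chi_{\intt}(\xi)|\xi|^{-\frac{4}{3}\sigma}\mathrm{e}^{-c|\xi|^{\frac{2}{3}\sigma}t}\right\|_{L^2}\lesssim (1+t)^{-\frac{3n-8\sigma}{4\sigma}},
\end{align*}
i.e.\ Lemma \ref{Lem-01} with $s=-\frac{4}{3}\sigma$. But Lemma \ref{Lem-01} requires $n>-2s=\frac{8}{3}\sigma$, and in the range $\frac{4}{3}\sigma<n\leqslant\frac{8}{3}\sigma$ the left-hand side is actually $+\infty$ for every $t$: near $r=0$ the integrand $r^{n-1-\frac{8}{3}\sigma}\mathrm{e}^{-2cr^{\frac{2}{3}\sigma}t}$ behaves like $r^{n-1-\frac{8}{3}\sigma}$ (the exponential tends to $1$), and $\int_0 r^{n-1-\frac{8}{3}\sigma}\mathrm{d}r$ diverges precisely when $n\leqslant\frac{8}{3}\sigma$. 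Your claim that the exponent $\frac{3(n-\frac{8}{3}\sigma)}{2\sigma}-1$ is $>-1$ ``because $n>\frac{4}{3}\sigma$'' is an arithmetic slip: that inequality is equivalent to $n>\frac{8}{3}\sigma$, which is exactly what is excluded here. The same problem kills your treatment of $n=\frac{8}{3}\sigma$: the divergence of $\int_0^{\cdot}\omega^{-1}\mathrm{e}^{-2c\omega}\mathrm{d}\omega$ is at $\omega=0$, so the piece $\int_0^1$ in your splitting is itself infinite, not a harmless constant. The underlying issue is that the crude pointwise bound $|\widehat{K}_2|\lesssim|\xi|^{-\frac{4}{3}\sigma}\mathrm{e}^{-c|\xi|^{\frac{2}{3}\sigma}t}$ throws away the cancellation among the three exponentials in \eqref{B2} near $\xi=0$ (the kernel in fact has a removable singularity there, $\widehat{K}_2(t,0)=\frac{t^2}{2}$), and for $n\leqslant\frac{8}{3}\sigma$ that cancellation is indispensable.

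The paper's proof exploits the cancellation indirectly: differentiating \eqref{B2} in $t$ replaces the singularity $|\xi|^{-\frac{4}{3}\sigma}$ by $|\xi|^{-\frac{2}{3}\sigma}$, so Lemma \ref{Lem-01} with $s=-\frac{2}{3}\sigma$ applies under the weaker condition $n>\frac{4}{3}\sigma$ and yields $\|v_t(t,\cdot)\|_{L^2}\lesssim(1+t)^{-\frac{3n-4\sigma}{4\sigma}}\|(v_0,v_1,v_2)\|_{\ml{B}_{\sigma}}$; then $v(t,x)=v_0(x)+\int_0^tv_t(\tau,x)\mathrm{d}\tau$ converts the exponent $0<\frac{3n-4\sigma}{4\sigma}\leqslant1$ into the growth factor $(1+t)^{-\frac{3n-8\sigma}{4\sigma}}$, respectively $\ln(\mathrm{e}+t)$ in the borderline case. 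To repair your argument without that detour you would have to replace the singular pointwise bound by one that keeps the cancellation, e.g.\ $|\widehat{K}_2(t,|\xi|)|\lesssim\min\{t^2,|\xi|^{-\frac{4}{3}\sigma}\}\,\mathrm{e}^{-c|\xi|^{\frac{2}{3}\sigma}t}$ in the spirit of Lemma \ref{Lem-02}; as written, the proof does not go through.
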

\begin{proof}
By taking time-derivative to \eqref{B2}, we obtain
\begin{align*}
|\widehat{v}_t|\lesssim\mathrm{e}^{-c|\xi|^{\frac{2}{3}\sigma}t}\left(|\xi|^{\frac{2}{3}\sigma}|\widehat{v}_0|+|\widehat{v}_1|+|\xi|^{-\frac{2}{3}\sigma}|\widehat{v}_2|\right),
\end{align*}
which immediately shows
\begin{align*}
\|v_t(t,\cdot)\|_{L^2}\lesssim(1+t)^{-\frac{3n-4\sigma}{4\sigma}}\|(v_0,v_1,v_2)\|_{\ml{B}_{\sigma}}
\end{align*}
by assuming $n>\frac{4}{3}\sigma$ since Lemma \ref{Lem-01} with $s=-\frac{2}{3}\sigma$. Due to the  relation $v(t,x)=\int_0^tv_t(\tau,x)\mathrm{d}\tau+v_0(x)$, one gets
\begin{align*}
\|v(t,\cdot)\|_{L^2}&\lesssim\int_0^t\|v_t(\tau,\cdot)\|_{L^2}\mathrm{d}\tau+\|v_0\|_{L^2}\lesssim\left(1+\int_0^t(1+\tau)^{-\frac{3n-4\sigma}{4\sigma}}\mathrm{d}\tau\right)\|(v_0,v_1,v_2)\|_{\ml{B}_{\sigma}}.
\end{align*}
From $\frac{4}{3}\sigma<n\leqslant\frac{8}{3}\sigma$, i.e. $0<\frac{3n-4\sigma}{4\sigma}\leqslant 1$, we obtain the desired estimates.
\end{proof}

\begin{remark}
	In comparison with the diffusion-waves case $\eta\in(1,3)$, due to the lack of oscillations $\sin(C_{\eta}|\xi|^{\frac{2}{3}\sigma}t)$ and $\cos(C_{\eta}|\xi|^{\frac{2}{3}\sigma}t)$ in the solution formula from the diffusion case $\eta\in(3,\infty)$, we have a logarithmic loss in the limit case $n=\frac{8}{3}\sigma$, but the estimates in Proposition \ref{Prop-(3,infty)} and Corollary \ref{Coro-New} for the remaining case $n\in(\frac{4}{3}\sigma,\frac{8}{3}\sigma)\cup(\frac{8}{3}\sigma,\infty)$ are exactly the same as those of Proposition \ref{Prop-(1,3)}.
\end{remark}

We focus on the final scenario $\eta=3$.

\begin{prop}\label{Prop-(3)}
	Let $n\geqslant 1$ with $\sigma\in(0,\infty)$. Let us consider the linearized Cauchy problem \eqref{Eq-Linear-Third-PDE} with $\eta=3$ and $(v_0,v_1,v_2)\in\ml{B}_{\sigma}$. The solution fulfills the estimates \eqref{C(1,3),L2L1-L2}-\eqref{C(1,3),HsL1-L2}. Furthermore, the asymptotic profile when $\eta=3$ is described by $w_{[3]}=w_{[3]}(t,x)$ such that
	\begin{align*}
	w_{[3]}(t,x)&:=\frac{t^2}{2}\ml{F}^{-1}_{\xi\to x}\left(\mathrm{e}^{-|\xi|^{\frac{2}{3}\sigma }t}\right)v_2(x)
	\end{align*}
	in the sense of refined estimates
	\begin{align*}
	\|v(t,\cdot)-w_{[3]}(t,\cdot)\|_{L^2}&\lesssim (1+t)^{-\frac{3n-4\sigma}{4\sigma}}\|(v_0,v_1,v_2)\|_{\ml{B}_{\sigma}},\\
	\|v(t,\cdot)-w_{[3]}(t,\cdot)\|_{\dot{H}^{\frac{4}{3}\sigma}}&\lesssim (1+t)^{-\frac{3n+4\sigma}{4\sigma}}\|(v_0,v_1,v_2)\|_{\ml{B}_{\sigma}}.
	\end{align*}
	Assuming $v_0\equiv0\equiv v_1$, the solution fulfills the estimate \eqref{C(1,3),L2-L2}.
\end{prop}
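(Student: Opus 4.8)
The plan is to run the argument of Propositions~\ref{Prop-(1,3)} and~\ref{Prop-(3,infty)} once more, this time starting from the explicit Fourier representation valid when $\eta=3$:
\[
\widehat v(t,\xi)=\Big[\widehat v_0+\big(|\xi|^{\frac{2}{3}\sigma}\widehat v_0+\widehat v_1\big)t+\tfrac12\big(|\xi|^{\frac{4}{3}\sigma}\widehat v_0+2|\xi|^{\frac{2}{3}\sigma}\widehat v_1+\widehat v_2\big)t^{2}\Big]\mathrm{e}^{-|\xi|^{\frac{2}{3}\sigma}t},
\]
obtained above from the triple characteristic root $\lambda_1=\lambda_2=\lambda_3=-|\xi|^{\frac{2}{3}\sigma}$. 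The one structural difference from $\eta\in(3,\infty)$ is the polynomial-in-$t$ amplitudes of degree at most two; there is no oscillatory factor, so the analysis is actually lighter than for $\eta\in(1,3)$. The recurring tool is the elementary estimate $y^{k}\mathrm{e}^{-y}\lesssim\mathrm{e}^{-cy}$ for $y\geqslant0$ and $k=1,2$, which turns each factor $(t|\xi|^{\frac{2}{3}\sigma})^{k}$ standing in front of $\mathrm{e}^{-|\xi|^{\frac{2}{3}\sigma}t}$ into a constant.

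First I split $\widehat v$ with the cut-offs $\chi_{\intt}$ and $\chi_{\extt}$. On $\ml{Z}_{\extt}(\varepsilon_0/2)$, where $|\xi|^{\frac{2}{3}\sigma}\gtrsim1$, every bare power of $t$ and every surplus power of $|\xi|$ (beyond the Sobolev regularity supplied by $\ml{B}_{\sigma}$) gets absorbed into $\mathrm{e}^{-c|\xi|^{\frac{2}{3}\sigma}t}$ at the cost of a factor $\mathrm{e}^{-c't}$, so that $\chi_{\extt}\big(|\widehat v|+|\xi|^{\frac{4}{3}\sigma}|\widehat v|\big)\lesssim\mathrm{e}^{-c't}\chi_{\extt}\big(\langle\xi\rangle^{\frac{4}{3}\sigma}|\widehat v_0|+\langle\xi\rangle^{\frac{2}{3}\sigma}|\widehat v_1|+|\widehat v_2|\big)$; the Sobolev components of $\ml{B}_{\sigma}$ then make the exterior contribution to all five estimates exponentially small, hence never dominant. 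On $\ml{Z}_{\intt}(\varepsilon_0)$ I combine the Hausdorff--Young inequality $\|\widehat v_j\|_{L^{\infty}}\lesssim\|v_j\|_{L^1}$ with Lemma~\ref{Lem-01}. For the $v_0$-terms I absorb every $t^{k}|\xi|^{\frac{2k}{3}\sigma}$ into the exponential and bound them by $\mathrm{e}^{-c|\xi|^{\frac{2}{3}\sigma}t}|\widehat v_0|$, so Lemma~\ref{Lem-01} with $s=0$ (resp.\ $s=\frac{2}{3}\sigma$ for the $\dot{H}^{\frac{4}{3}\sigma}$ norm) gives a contribution decaying at least like $(1+t)^{-\frac{3n}{4\sigma}}$; these are never the slowest. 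For the $v_1$- and $v_2$-terms I keep the leftover power of $t$ outside the $L^2_{\xi}$ norm and use $\big\|t^{k}\chi_{\intt}(\xi)|\xi|^{s}\mathrm{e}^{-|\xi|^{\frac{2}{3}\sigma}t}\big\|_{L^2}\lesssim t^{k}(1+t)^{-\frac{3(2s+n)}{4\sigma}}$ from Lemma~\ref{Lem-01} with $s=0$ or $s=\frac{2}{3}\sigma$; since this only requires $2s+n>0$, i.e.\ $n>0$, the whole range $n\geqslant1$ is covered with no extra hypothesis. Collecting terms, the slowest interior contribution to $\|v(t,\cdot)\|_{L^2}$ is $\tfrac12 t^{2}\widehat v_2\,\mathrm{e}^{-|\xi|^{\frac{2}{3}\sigma}t}$, of size $t^{2}(1+t)^{-\frac{3n}{4\sigma}}\simeq(1+t)^{-\frac{3n-8\sigma}{4\sigma}}$, giving~\eqref{C(1,3),L2L1-L2}; the extra weight $|\xi|^{\frac{4}{3}\sigma}$ converts the amplitude $t^{2}$ of this same term into the bounded quantity $(t|\xi|^{\frac{2}{3}\sigma})^{2}$, hence into $(1+t)^{-\frac{3n}{4\sigma}}$, which is~\eqref{C(1,3),HsL1-L2}.

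For the refined estimates I note that
\[
\widehat v-\widehat w_{[3]}=\Big[\widehat v_0+\big(|\xi|^{\frac{2}{3}\sigma}\widehat v_0+\widehat v_1\big)t+\tfrac12\big(|\xi|^{\frac{4}{3}\sigma}\widehat v_0+2|\xi|^{\frac{2}{3}\sigma}\widehat v_1\big)t^{2}\Big]\mathrm{e}^{-|\xi|^{\frac{2}{3}\sigma}t},
\]
so the $\widehat v_2$-amplitude cancels exactly and only $v_0$- and $v_1$-terms remain. Repeating the interior analysis, the dominant survivor is the $v_1$ part — either $t\widehat v_1\,\mathrm{e}^{-|\xi|^{\frac{2}{3}\sigma}t}$ or $t^{2}|\xi|^{\frac{2}{3}\sigma}\widehat v_1\,\mathrm{e}^{-|\xi|^{\frac{2}{3}\sigma}t}\lesssim t\,\mathrm{e}^{-c|\xi|^{\frac{2}{3}\sigma}t}|\widehat v_1|$ — which yields $t(1+t)^{-\frac{3n}{4\sigma}}\simeq(1+t)^{-\frac{3n-4\sigma}{4\sigma}}$ in $L^2$ and, after the weight $|\xi|^{\frac{4}{3}\sigma}$, $(1+t)^{-\frac{3n+4\sigma}{4\sigma}}$ in $\dot{H}^{\frac{4}{3}\sigma}$; when $n>\frac{4}{3}\sigma$ one may instead use the pointwise bound $|\widehat v-\widehat w_{[3]}|\lesssim\mathrm{e}^{-c|\xi|^{\frac{2}{3}\sigma}t}\big(|\widehat v_0|+|\xi|^{-\frac{2}{3}\sigma}|\widehat v_1|\big)$ with Lemma~\ref{Lem-01} for $s=-\frac{2}{3}\sigma$, exactly as in Proposition~\ref{Prop-(1,3)}. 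Finally, if $v_0\equiv v_1\equiv0$ then $|\xi|^{\frac{4}{3}\sigma}\widehat v=\tfrac12(t|\xi|^{\frac{2}{3}\sigma})^{2}\mathrm{e}^{-|\xi|^{\frac{2}{3}\sigma}t}\widehat v_2$, and since $y^{2}\mathrm{e}^{-y}$ is bounded on $[0,\infty)$, the Parseval equality gives $\|v(t,\cdot)\|_{\dot{H}^{\frac{4}{3}\sigma}}\lesssim\|\widehat v_2\|_{L^2}=\|v_2\|_{L^2}$, which is~\eqref{C(1,3),L2-L2}. I expect no genuine analytic obstacle here: the only delicate point is the bookkeeping of which power of $t$ survives after absorbing the $(t|\xi|^{\frac{2}{3}\sigma})$-factors, so that each resulting term is matched with the right instance of Lemma~\ref{Lem-01}, together with the minor observation that the polynomial amplitudes force one to keep $t$-prefactors outside the $L^2_{\xi}$ norm in order to cover the low-dimensional range $1\leqslant n\leqslant\frac{4}{3}\sigma$.
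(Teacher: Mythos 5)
Your proposal is correct and follows essentially the same route as the paper: the paper's proof also reduces everything to the pointwise bounds $|\widehat{v}|\lesssim\mathrm{e}^{-c|\xi|^{\frac{2}{3}\sigma}t}\big(|\widehat{v}_0|+t|\widehat{v}_1|+t^2|\widehat{v}_2|\big)$ and $|\widehat{v}-\widehat{w}_{[3]}|\lesssim\mathrm{e}^{-c|\xi|^{\frac{2}{3}\sigma}t}\big(|\widehat{v}_0|+t|\widehat{v}_1|\big)$, obtained by absorbing the $(t|\xi|^{\frac{2}{3}\sigma})^{k}$ factors into the exponential while keeping the bare powers of $t$ outside, and then applies Lemma \ref{Lem-01} exactly as you do. Your bookkeeping of which term dominates (the $v_2$-amplitude $t^2$ for \eqref{C(1,3),L2L1-L2}--\eqref{C(1,3),HsL1-L2}, the $v_1$-amplitude $t$ after the cancellation in $\widehat{v}-\widehat{w}_{[3]}$) and your observation that only $n>0$ is needed are precisely what the paper leaves implicit.
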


\begin{proof}
From the pointwise estimates
\begin{align*}
|\widehat{v}|&\lesssim\mathrm{e}^{-c|\xi|^{\frac{2}{3}\sigma}t}\left(|\widehat{v}_0|+t|\widehat{v}_1|+t^2|\widehat{v}_2|\right),\\
|\widehat{v}-\widehat{w}_{[3]}|&\lesssim\mathrm{e}^{-c|\xi|^{\frac{2}{3}\sigma}t}\,\big(|\widehat{v}_0|+t|\widehat{v}_1|\big),
\end{align*}
we may complete the proof easily by using Lemma \ref{Lem-01}.
\end{proof}
\begin{remark}
	Because the disappearance of singularities $|\xi|^{-\frac{2}{3}\sigma}$ and $|\xi|^{-\frac{4}{3}\sigma}$ for small frequencies in the representation of solution when $\eta=3$, we can get rid of the restriction of $n$ with respect to $\sigma$, moreover, the asymptotic profile $w_{[3]}$ does not contain any singular component.
\end{remark}
\begin{remark}\label{Rem-eta=3}
Let us combine Propositions \ref{Prop-(1,3)}-\ref{Prop-(3)} and Corollary \ref{Coro-New}. We realize that the solution to the linearized Cauchy problem \eqref{Eq-Linear-Third-PDE} for $n\geqslant 1$ fulfills the same $(L^2\cap L^1)-L^2$ type estimates \eqref{C(1,3),L2L1-L2}-\eqref{C(1,3),HsL1-L2} and $L^2-L^2$ type estimate \eqref{C(1,3),L2-L2} under the restriction  on dimensions 
\begin{align}\label{Restriction(n,sigma)}
\begin{cases}
\displaystyle{n>\frac{4}{3}\sigma}&\mbox{if}\ \ \eta\in(1,3),\\[0.5em]
n>0&\mbox{if}\ \ \eta=3,\\[0.5em]
\displaystyle{n>\frac{4}{3}\sigma \ \mbox{ but }\ n\neq\frac{8}{3}\sigma}&\mbox{if}\ \ \eta\in(3,\infty),
\end{cases}
\end{align}
for any $\sigma\in(0,\infty)$. This phenomenon is caused by
\begin{itemize}
	\item the general diffusion profile $w_{(3,\infty)}(t,x)$ with strong singularity $|\xi|^{-\frac{4}{3}\sigma}$ when $\eta\in(3,\infty)$;
	\item the general diffusion-waves profile $w_{(1,3)}(t,x)$ with strong singularity $|\xi|^{-\frac{4}{3}\sigma}$ and oscillations $\sin(C_{\eta}|\xi|^{\frac{2}{3}\sigma}t)$ as well as $\cos(C_{\eta}|\xi|^{\frac{2}{3}\sigma}t)$ when $\eta\in(0,3)$;
	\item the general diffusion profile $w_{[3]}(t,x)$ without any singularity when $\eta=3$.
\end{itemize}
   Namely, $\eta=3$ is the  threshold to distinguish different kinds of asymptotic profiles (see Figure \ref{imgg} 
    specifically). Note that different degree of singularities with respect to small $|\xi|$ will be generated by various asymptotic profiles.
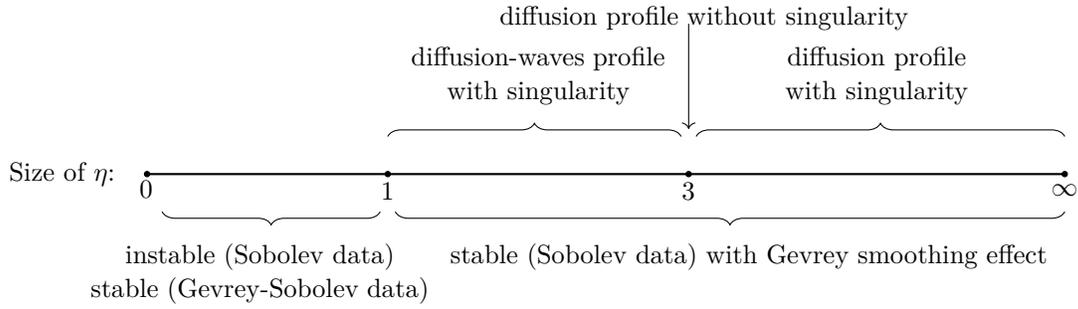
\begin{figure}[http]
	\centering
	\begin{tikzpicture}
	\draw[thick] (-0.2,0) -- (12,0) node[below] {$\infty$};
	\node[left] at (0,-0.2) {{$0$}};
	\node[below] at (3,0) {{$1$}};
	\node[below] at (7,0) {{$3$}};
	\draw [decorate, decoration={brace, amplitude=5pt}]  (3,0.5) -- (6.9,0.5);
	\draw [decorate, decoration={brace, amplitude=5pt}]  (7.1,0.5) -- (12,0.5);
	\draw [decorate, decoration={brace, amplitude=5pt}]  (2.9,-0.5)--(0,-0.5) ;
	\draw [decorate, decoration={brace, amplitude=5pt}]  (12,-0.5)--(3.1,-0.5) ;
		\node[above] at (5,0.8) {with singularity};
	\node[above] at (5,1.25) {diffusion-waves profile};
	\node[above] at (9.5,1.25) {diffusion profile};
			\node[above] at (9.5,0.8) {with singularity};
			\node[above] at (7.2,1.8){diffusion profile without singularity};
	\node[above] at (1.3,-1.4) {instable (Sobolev data)};
		\node[above] at (1.3,-1.85) {stable (Gevrey-Sobolev data)};
	\node[above] at (7.8,-1.4) {stable (Sobolev data) with Gevrey smoothing effect};
	\node[left] at (-0.5,0) {Size of $\eta$:};
	\draw[fill] (-0.2, 0) circle[radius=1pt];
	\draw[fill] (3, 0) circle[radius=1pt];
	\draw[fill] (7, 0) circle[radius=1pt];
	\draw[fill] (12, 0) circle[radius=1pt];
	\draw[->] (7,2) -- (7,0.6);
	\end{tikzpicture}
	\caption{Two thresholds with respect to $\eta$}
	\label{imgg}
\end{figure}
\end{remark}

To end this section, we will state that the derived $(L^2\cap L^1)$ type estimates in Propositions \ref{Prop-(1,3)}-\ref{Prop-(3)} are really sharp in the sense of same large-time behaviors for upper bounds estimates and lower bound estimates. The proof is motived by \cite{Ikehata=2014,Ikehata-Takeda=2019}.
\begin{prop}\label{Prop-sharpness-linear}
Let $n\geqslant 1$ and $\sigma\in(0,\infty)$. Let us consider the linearized Cauchy problem \eqref{Eq-Linear-Third-PDE} with $n>\frac{4}{3}\sigma$ if $\eta\in(1,3)$, $n>0$ if $\eta=3$, $n>\frac{8}{3}\sigma$ if $\eta\in(3,\infty)$, and $(v_0,v_1,v_2)\in\ml{B}_{\sigma}$. The solution fulfills the following sharp estimates:
\begin{align}
\|v(t,\cdot)\|_{L^2}\simeq t^{-\frac{3n-8\sigma}{4\sigma}}\ \ \mbox{and}\ \ \|v(t,\cdot)\|_{\dot{H}^{\frac{4}{3}\sigma}}\simeq t^{-\frac{3n}{4\sigma}}\label{Su05}
\end{align}
for large-time $t\gg1$ if $P_{v_2}:=\int_{\mb{R}^n}v_2(x)\mathrm{d}x\neq0$.
\end{prop}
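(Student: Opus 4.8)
The plan is to prove sharpness by extracting the leading-order term of the solution for small frequencies and showing that it already saturates the upper bounds in Propositions \ref{Prop-(1,3)}--\ref{Prop-(3)} from below, provided the total mass $P_{v_2}=\int_{\mb{R}^n}v_2(x)\,\mathrm{d}x\neq0$. First I would recall that in every regime $\eta\in(1,3)\cup(3,\infty)\cup\{3\}$ the representation formulas \eqref{B1}, \eqref{B2} (and the $\eta=3$ formula) show that the $\widehat{v}_2$-part of $\widehat v$ is of the form $|\xi|^{-\frac{4}{3}\sigma}\,\Theta_\eta(|\xi|^{\frac{2}{3}\sigma}t)\,\widehat{v}_2(\xi)$ for an explicit function $\Theta_\eta$ (a combination of $\mathrm{e}^{-\omega}$, $\cos(C_\eta\omega)\mathrm{e}^{-\frac12(\eta-1)\omega}$, $\sin(C_\eta\omega)\mathrm{e}^{-\frac12(\eta-1)\omega}$ when $\eta\in(1,3)$, the analogous exponentials when $\eta\in(3,\infty)$, and $\tfrac12\omega^2\mathrm{e}^{-\omega}$ when $\eta=3$), together with $\widehat{v}_0$- and $\widehat{v}_1$-parts that, by the refined estimates \eqref{Refine(1,3)-1}--\eqref{Refine(1,3)-2} (and their analogues in Propositions \ref{Prop-(3,infty)}, \ref{Prop-(3)}), decay one power of $(1+t)^{-1}$ faster than the claimed sharp rate. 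Hence it suffices to bound from below the $\dot H^s$-norm ($s=0$ or $s=\frac43\sigma$) of the $\widehat{v}_2$-contribution on the low-frequency zone $\mathcal{Z}_{\intt}(\varepsilon_0)$.

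The core estimate is then a standard mass-nondegeneracy argument: write $\widehat{v}_2(\xi)=P_{v_2}+\big(\widehat{v}_2(\xi)-\widehat{v}_2(0)\big)$, and note $|\widehat{v}_2(\xi)-P_{v_2}|\lesssim |\xi|^{\theta}\|v_2\|_{L^1}$ for some $\theta\in(0,1]$ (in fact any $\theta\le1$, using $\||x|^{\theta}v_2\|_{L^1}<\infty$ is not even needed for $\theta$ arbitrarily small via a crude argument, but taking $\theta$ small suffices). Then
\begin{align*}
\big\|\chi_{\intt}(\xi)|\xi|^{s}|\xi|^{-\frac43\sigma}\Theta_\eta(|\xi|^{\frac23\sigma}t)\widehat{v}_2(\xi)\big\|_{L^2}
&\geqslant |P_{v_2}|\,\big\|\chi_{\intt}(\xi)|\xi|^{s-\frac43\sigma}\Theta_\eta(|\xi|^{\frac23\sigma}t)\big\|_{L^2}\\
&\quad- C\|v_2\|_{L^1}\big\|\chi_{\intt}(\xi)|\xi|^{s-\frac43\sigma+\theta}\Theta_\eta(|\xi|^{\frac23\sigma}t)\big\|_{L^2},
\end{align*}
and the first term is bounded below by $c|P_{v_2}|\,t^{-\frac{3(2s-\frac83\sigma+n)}{4\sigma}}$ for $t\gg1$ by the lower-bound computations already carried out in the proof of Lemma \ref{Lem-03} (the substitution $\omega=r^{\frac23\sigma}t$, shrinking the $\omega$-domain to a fixed interval on which $\Theta_\eta$ is bounded away from $0$ — this uses exactly the limits $\lim_{\omega\downarrow0}\omega^{-1}(\mathrm{e}^{-\omega}-\cos(C_\eta\omega)\mathrm{e}^{-\frac12(\eta-1)\omega})=\frac12(\eta-3)\neq0$ for $\eta\in(1,3)$, the analogous nonvanishing constant for $\eta\in(3,\infty)$, and $\Theta_3(\omega)/\omega^2\to\frac12$ for $\eta=3$). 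With $s=0$ this gives $t^{-\frac{3n-8\sigma}{4\sigma}}$ and with $s=\frac43\sigma$ it gives $t^{-\frac{3n}{4\sigma}}$. The error term carries an extra factor $t^{-\frac{3\theta}{2\sigma}}\cdot t^{(\text{same exponent})}$, hence is negligible for $t\gg1$; similarly the high-frequency zone $\mathcal{Z}_{\extt}(\varepsilon_0)$ contributes only exponentially decaying terms by \eqref{Critical-Gevrey-data-01}, so it does not interfere. Combining the lower bound on the $v_2$-part with the faster-decaying $v_0,v_1$-contributions via the reverse triangle inequality $\|v(t,\cdot)\|\geqslant \|v_2\text{-part}\|-\|v_0,v_1\text{-parts}\|\geqslant \frac{c}{2}|P_{v_2}|\,t^{-\ldots}$ for $t$ large, together with the matching upper bounds \eqref{C(1,3),L2L1-L2}--\eqref{C(1,3),HsL1-L2}, yields \eqref{Su05}.

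The main obstacle I anticipate is bookkeeping the regime-dependent oscillatory/exponential factors $\Theta_\eta$ uniformly: one must verify in each of the three cases that $\Theta_\eta(\omega)\omega^{-2}$ (for $s=0$, after the $|\xi|^{-\frac43\sigma}$ singularity is accounted for) has a nonzero, finite limit as $\omega\downarrow0$, so that on a small fixed interval $(0,\varepsilon_0^*)$ one genuinely has $|\Theta_\eta(|\xi|^{\frac23\sigma}t)|\gtrsim |\xi|^{\frac43\sigma}t^2$ from below — this is precisely where the sign and non-cancellation of the $\eta$-dependent constants enter, and where the exclusion $\eta\neq3$ would matter if the asymptotic profiles were not already separated (here it does not, since $\eta=3$ is handled by its own formula). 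A secondary, purely technical point is justifying $|\widehat{v}_2(\xi)-P_{v_2}|\lesssim|\xi|^\theta$: since $v_2\in L^1$ only, one cannot differentiate $\widehat{v}_2$, but the inequality holds for every $\theta\in(0,1)$ merely by $|\mathrm{e}^{-ix\cdot\xi}-1|\lesssim|x\cdot\xi|^\theta\lesssim(|x|\,|\xi|)^\theta$ and dominated convergence, which is enough because we only need \emph{some} positive power of $|\xi|$ to gain a positive power of $t^{-1}$ in the error term.
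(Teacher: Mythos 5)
Your overall strategy is the one the paper uses: reduce to the $v_2$-profile via the refined estimates of Propositions \ref{Prop-(1,3)}--\ref{Prop-(3)}, replace $v_2$ by its mass $P_{v_2}$, bound the resulting pure multiplier from below on $\ml{Z}_{\intt}(\varepsilon_0)$ as in Lemma \ref{Lem-03}, and conclude by the reverse triangle inequality. The one genuine difference is where you perform the mass-replacement step: the paper does it in physical space, writing $\ml{L}_\Xi(t,\cdot)\ast v_2-\ml{L}_\Xi(t,\cdot)P_{v_2}$, applying the Lagrange theorem to $\ml{L}_\Xi(t,x-y)-\ml{L}_\Xi(t,x)$, and splitting $|y|\lessgtr t^{\alpha_1}$ so that the tail is controlled by $\|v_2\|_{L^1(|x|\geqslant t^{\alpha_1})}=o(1)$; you do it on the Fourier side via the modulus of continuity of $\widehat v_2$ at $\xi=0$. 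A point in your favour: by tracking the \emph{combined} symbol $\Theta_\eta(\omega)\omega^{-2}\to\tfrac12$ you verify that the two oscillatory pieces of $w_{(1,3)}$ do not cancel, whereas the paper's Lemma \ref{Lem-03} only bounds each piece of $\widehat{w}_{(1,3)}$ from below individually, which does not by itself yield a lower bound for their sum.

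However, your justification of the key Fourier-side inequality is flawed: $|\widehat v_2(\xi)-P_{v_2}|\lesssim|\xi|^{\theta}\|v_2\|_{L^1}$ is \emph{false} for general $v_2\in L^1$. The estimate $|\mathrm{e}^{-ix\cdot\xi}-1|\lesssim(|x|\,|\xi|)^{\theta}$ only gives $|\xi|^{\theta}\int_{\mb{R}^n}|x|^{\theta}|v_2(x)|\,\mathrm{d}x$, and that moment can be infinite for every $\theta>0$ when $v_2$ is merely integrable (e.g. $v_2(x)=|x|^{-n}(\log|x|)^{-2}$ for $|x|\geqslant2$); taking $\theta$ small does not help, and ``dominated convergence'' does not produce a rate. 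The repair is easy and does not change your conclusion: $v_2\in L^1$ gives only $\widehat v_2(\xi)\to P_{v_2}$ as $\xi\to0$ with no rate, so split $\ml{Z}_{\intt}$ into $|\xi|\leqslant\delta$ (where the deviation is $\leqslant\sup_{|\xi|\leqslant\delta}|\widehat v_2(\xi)-P_{v_2}|$ times the full multiplier norm $\lesssim t^{-\mathrm{rate}}$) and $\delta\leqslant|\xi|\leqslant\varepsilon_0$ (where $\Theta_\eta$ decays exponentially in $t$ since $\eta>1$), then let $\delta\downarrow0$ after $t\to\infty$; this yields exactly the $o(t^{-\mathrm{rate}})$ error you need, matching the paper's \eqref{Su04}. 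With that correction your argument is complete.
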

\begin{proof}
Let us rewrite the asymptotic profiles by
\begin{align*}
w_{\Xi}(t,x)=\ml{L}_{\Xi}(t,x)v_2(x)
\end{align*}
with $\Xi\in\{(1,3),[3],(3,\infty)\}$. In other words, Propositions \ref{Prop-(1,3)}-\ref{Prop-(3)} have proved
\begin{align}
\left\|\big(v-\ml{L}_{\Xi}v_2\big)(t,\cdot)\right\|_{L^2}\lesssim(1+t)^{-\frac{3n-8\sigma}{4\sigma}-1}\|(v_0,v_1,v_2)\|_{\ml{B}_{\sigma}}.\label{Su03}
\end{align}
By applying the Lagrange theorem
\begin{align*}
	|\ml{L}_{\Xi}(t,x-y)-\ml{L}_{\Xi}(t,x)|\lesssim|y|\,|\nabla\ml{L}_{\Xi}(t,x-\theta_1y)|
\end{align*}
with $\theta_1\in(0,1)$, we are able to obtain
\begin{align*}
\|\ml{L}_{\Xi}(t,D)v_2(\cdot)-\ml{L}_{\Xi}(t,\cdot)P_{v_2}\|_{L^2}&\lesssim\left\|\int_{|y|\leqslant t^{\alpha_1}}\big(\ml{L}_{\Xi}(t,\cdot-y)-\ml{L}_{\Xi}(t,\cdot)\big)v_2(y)\mathrm{d}y\right\|_{L^2}\\
&\quad+\left\|\int_{|y|\geqslant t^{\alpha_1}}\big(|\ml{L}_{\Xi}(t,\cdot-y)|+|\ml{L}_{\Xi}(t,\cdot)|\big)|v_2(y)|\mathrm{d}y\right\|_{L^2}\\
&\lesssim t^{\alpha_1}\|\nabla\ml{L}_{\Xi}(t,\cdot)\|_{L^2}\|v_2\|_{L^1}+\|\ml{L}_{\Xi}(t,\cdot)\|_{L^2}\|v_2\|_{L^1(|x|\geqslant t^{\alpha_1})}
\end{align*}
with a positive small constant $0<\alpha_1\ll 1$. Due to $v_2\in L^1$ implying $\|v_2\|_{L^1(|x|\geqslant t^{\alpha_1})}=o(1)$, by using Lemmas \ref{Lem-01} and \ref{Lem-02}, we derive
\begin{align}\label{Su04} 
\|\ml{L}_{\Xi}(t,D)v_2(\cdot)-\ml{L}_{\Xi}(t,\cdot)P_{v_2}\|_{L^2}=o(t^{-\frac{3n-8\sigma}{4\sigma}})
\end{align}
as $t\gg1$. According to \eqref{Su03}-\eqref{Su04} and the Minkowski inequality, one claims
\begin{align*}
	\|v(t,\cdot)\|_{L^2}&\gtrsim\|\ml{L}_{\Xi}(t,\cdot)\|_{L^2}|P_{v_2}|-\left\|\big(v-\ml{L}_{\Xi}P_{v_2}\big)(t,\cdot)\right\|_{L^2}\\
	&\gtrsim\|\chi_{\intt}(D)\ml{L}_{\Xi}(t,\cdot)\|_{L^2}|P_{v_2}|-o(t^{-\frac{3n-8\sigma}{4\sigma}})
\end{align*}
for large-time $t\gg1$. Recalling the representations of $\ml{L}_{\Xi}(t,x)$ for $\Xi\in \{(1,3),[3],(3,\infty)\}$, we found that the lower bounds have been derived in Lemma \ref{Lem-03} such that
\begin{align*}
	\|\chi_{\intt}(\xi)\widehat{\ml{L}}_{\Xi}(t,\xi)\|_{L^2}\gtrsim t^{-\frac{3n-8\sigma}{4\sigma}},
\end{align*}
which immediately leads to
\begin{align*}
	\|v(t,\cdot)\|_{L^2}\gtrsim t^{-\frac{3n-8\sigma}{4\sigma}}|P_{v_2}|
\end{align*}
for $t\gg1$. Hence, we complete the sharp estimate for the solution itself. By the same philosophy, the sharpness of the second one in \eqref{Su05} can be proved analogously.
\end{proof}

\begin{remark}
Let us consider the general case $\eta\in(1,\infty)$. We find that $\sigma=\frac{3}{8}n$ is the threshold for decay estimates of the solution itself according to \eqref{Su05}. That is to say optimal polynomial decay when $\sigma\in(0,\frac{3}{8}n)$; sharp bounded estimates when $\sigma=\frac{3}{8}n$; optimal polynomial growth when $\sigma\in(\frac{3}{8}n,\infty)$ for the solution itself in the $L^2$ norm.
\end{remark}

%\newpage

\section{Global (in time) well-posedness for the semilinear Cauchy problem}\label{Sec-GESDS}
\subsection{Philosophy and main tools of our approach}
$\ \ \ \ $For any $T>0$, when \eqref{Restriction(n,sigma)} holds, let us introduce the evolution spaces of fractional order
\begin{align}\label{Evol-Space}
X(T):=\ml{C}\big([0,T],H^{\frac{4}{3}\sigma}\big)\ \ \mbox{with}\ \ \sigma\in(0,\infty),	
\end{align}
carrying its weighted norm 
\begin{align}\label{NEW002}
\|u\|_{X(T)}:=\sup\limits_{t\in[0,T]}\left((1+t)^{\frac{3n-8\sigma}{4\sigma}}\|u(t,\cdot)\|_{L^2}+(1+t)^{\frac{3n}{4\sigma}}\|u(t,\cdot)\|_{\dot{H}^{\frac{4}{3}\sigma}}\right).
\end{align}
Note that the remaining case $n=\frac{8}{3}\sigma$ when $\eta\in(3,\infty)$ does not include in \eqref{Restriction(n,sigma)}, which will be discussed in Remark \ref{Rem-spe}.
 The time-weighted Sobolev norm \eqref{NEW002} is strongly motivated by the sharp $(L^2\cap L^1)-L^2$ type estimates for the linearized Cauchy problem \eqref{Eq-Linear-Third-PDE} with $\eta\in(1,\infty)$, precisely, Propositions \ref{Prop-(1,3)}-\ref{Prop-(3)} and Corollary \ref{Coro-New}. Subsequently, we introduce the operator $\ml{N}$ such that
\begin{align*}
\ml{N}:\ u(t,x)\in X(T)\to \ml{N}u(t,x):=u^{\lin} (t,x)+u^{\non}(t,x),
\end{align*}
where $u^{\lin}(t,x) \equiv v(t,x)$ denotes the solution to the corresponding linearized Cauchy problem \eqref{Eq-Linear-Third-PDE} with the size $\epsilon$ for initial data, and $u^{\non}(t,x)$ is defined via
\begin{align*}
u^{\non}(t,x):=\int_0^tE_2(t-\tau,x)\ast_{(x)}|u(\tau,x)|^p\mathrm{d}\tau,
\end{align*}
where $E_2=E_2(t,x)$ is the fundamental solution to the linearized Cauchy problem \eqref{Eq-Linear-Third-PDE} with initial data $v_0(x)=v_1(x)=0$ and $v_2(x)=\delta_0$, in which $\delta_0$ is the Dirac distribution at $x=0$ with respect to the spatial variables. Particularly,
\begin{align*}
\ml{F}_{x\to\xi}\big(E_2(t,x)\big)=\widehat{K}_2(t,|\xi|):=\begin{cases}
\displaystyle{\sum\limits_{j=1,2,3}\frac{\mathrm{e}^{\lambda_jt}}{\prod\limits_{k=1,2,3,\ k\neq j}(\lambda_j-\lambda_k)}}&\mbox{when}\ \ \eta\neq3,\\[1.6em]
\displaystyle{\frac{1}{2}t^2\exp(-|\xi|^{\frac{2}{3}\sigma}t)}&\mbox{when}\ \ \eta=3,
\end{cases}
\end{align*}
with the characteristic roots \eqref{Characteristic_Roots}, which are well-studied in Section \ref{Sec-linear}. When \eqref{Restriction(n,sigma)} holds, a brief summary of Propositions \ref{Prop-(1,3)}-\ref{Prop-(3)} and Corollary \ref{Coro-New} indicates estimates as follows:
\begin{align}
\|E_2(t,\cdot)\ast_{(x)}g(t,\cdot)\|_{L^2}&\lesssim(1+t)^{-\frac{3n-8\sigma}{4\sigma}}\|g(t,\cdot)\|_{L^2\cap L^1},\label{D1}\\
\|E_2(t,\cdot)\ast_{(x)}g(t,\cdot)\|_{\dot{H}^{\frac{4}{3}\sigma}}&\lesssim(1+t)^{-\frac{3n}{4\sigma}}\|g(t,\cdot)\|_{L^2\cap L^1},\label{D2}\\
\|E_2(t,\cdot)\ast_{(x)}g(t,\cdot)\|_{\dot{H}^{\frac{4}{3}\sigma}}&\lesssim\|g(t,\cdot)\|_{L^2},\label{D3}
\end{align}
for $\eta\in(1,\infty)$ and dimensions $n$ fulfilling \eqref{Restriction(n,sigma)}.

We will demonstrate global (in time) existence and uniqueness of small data Sobolev solutions to the semilinear third-order (in time) evolution equations \eqref{Eq-Third-PDE} with $\eta\in(1,\infty)$ by proving a unique fixed point of the operator $\ml{N}$ that says $\ml{N}u\in X(T)$ for all  $T>0$. Accurately, we will rigorously prove the next two vital estimates uniformly with respect to $T$:
\begin{align}
\|\ml{N}u\|_{X(T)}&\lesssim\epsilon\|(u_0,u_1,u_2)\|_{\ml{B}_{\sigma}}+\|u\|_{X(T)}^p,\label{Cruc-01}\\
\|\ml{N}u-\ml{N}\bar{u}\|_{X(T)}&\lesssim\|u-\bar{u}\|_{X(T)}\big(\|u\|_{X(T)}^{p-1}+\|\bar{u}\|_{X(T)}^{p-1}\big),\label{Cruc-02}
\end{align}
respectively, under some conditions of the power $p$, with the initial data space $\ml{B}_{\sigma}$ defined in \eqref{Data-space-D}. In our desired estimate \eqref{Cruc-02}, $u$ and $\bar{u}$ are two solutions to the semilinear Cauchy problem \eqref{Eq-Third-PDE} with $\eta\in(1,\infty)$. Thus, by assuming $\|(u_0,u_1,u_2)\|_{\ml{B}_{\sigma} }\leqslant C$ and a small parameter $0<\epsilon\ll 1$, we may combine \eqref{Cruc-01} with \eqref{Cruc-02} to declare that there exists a global (in time) small data Sobolev solution $u^{*}=u^{*}(t,x)\in X(\infty)$ by using Banach's fixed point theorem.

To end this subsection, we recall the fractional Gagliardo-Nirenberg inequality \cite{Hajaiej-Molinet-Ozawa-Wang-2011} that will be applied soon afterwards.
\begin{lemma}\label{fractionalgagliardonirenbergineq}
	Let $p,p_0,p_1\in(1,\infty)$ and $\kappa\in[0,s)$ with $s\in(0,\infty)$. Then, for all $f\in L^{p_0}\cap \dot{H}^{s}_{p_1}$ the following interpolation holds:
	\begin{equation*}
	\|g\|_{\dot{H}^{\kappa}_{p}}\lesssim\|g\|_{L^{p_0}}^{1-\beta}\|g\|^{\beta}_{\dot{H}^{s}_{p_1}},
	\end{equation*}
	where $\beta=(\frac{1}{p_0}-\frac{1}{p}+\frac{\kappa}{n})\big/(\frac{1}{p_0}-\frac{1}{p_1}+\frac{s}{n})$ and $\beta\in[\frac{\kappa}{s},1]$.
\end{lemma}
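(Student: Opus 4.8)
The plan is to prove this by Littlewood--Paley theory, after a scaling reduction that pins down the exponent $\beta$. The first thing I would do is a dimensional analysis: replacing $g$ by $g(\lambda\,\cdot)$ with $\lambda>0$ multiplies $\|g\|_{\dot{H}^{\kappa}_{p}}$ by $\lambda^{\kappa-n/p}$ and $\|g\|_{L^{p_0}}^{1-\beta}\|g\|_{\dot{H}^{s}_{p_1}}^{\beta}$ by $\lambda^{-n(1-\beta)/p_0+\beta(s-n/p_1)}$, so for the estimate to survive both limits $\lambda\to0$ and $\lambda\to\infty$ these exponents must coincide, which forces exactly the stated value of $\beta$. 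This explains the formula and reduces the matter to a single scale-free estimate.

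For the analytic core I would write $g=\sum_{j\in\mb{Z}}\Delta_j g$ for the homogeneous dyadic decomposition, so $\widehat{\Delta_j g}$ is supported in $\{|\xi|\simeq 2^j\}$. Since $1<p<\infty$, the square-function characterization gives $\|g\|_{\dot{H}^{\kappa}_{p}}\simeq\big\|\big(\sum_j 2^{2j\kappa}|\Delta_j g|^2\big)^{1/2}\big\|_{L^p}$, which I would bound crudely by $\sum_j 2^{j\kappa}\|\Delta_j g\|_{L^p}$. Each dyadic block then admits two Bernstein-type bounds: one in terms of the low-regularity datum,
\[2^{j\kappa}\|\Delta_j g\|_{L^p}\lesssim 2^{ja_0}\|g\|_{L^{p_0}},\qquad a_0=\kappa+n\Big(\tfrac1{p_0}-\tfrac1p\Big),\]
using $\||D|^\kappa\Delta_j g\|_{L^p}\simeq 2^{j\kappa}\|\Delta_j g\|_{L^p}$ together with Bernstein's inequality and the $L^{p_0}$-boundedness of $\Delta_j$; and one in terms of the high-regularity datum,
\[2^{j\kappa}\|\Delta_j g\|_{L^p}\lesssim 2^{ja_1}\|g\|_{\dot{H}^{s}_{p_1}},\qquad a_1=\kappa-s+n\Big(\tfrac1{p_1}-\tfrac1p\Big),\]
using in addition $\|\Delta_j g\|_{L^{p_1}}\simeq 2^{-js}\||D|^s\Delta_j g\|_{L^{p_1}}$. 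Here $a_0-a_1=s+n(\tfrac1{p_0}-\tfrac1{p_1})>0$ equals $n$ times the denominator of $\beta$.

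Then I would split the sum at a threshold $j_*\in\mb{Z}$, use the first estimate for $j\le j_*$ and the second for $j\ge j_*$, and sum the two geometric series to get $\|g\|_{\dot{H}^{\kappa}_{p}}\lesssim 2^{j_*a_0}\|g\|_{L^{p_0}}+2^{j_*a_1}\|g\|_{\dot{H}^{s}_{p_1}}$; the series converge precisely when $a_0>0$ and $a_1<0$, which together with $a_0-a_1>0$ is equivalent to $0<\beta<1$ with $\beta=a_0/(a_0-a_1)$. Choosing $j_*$ so the two terms balance, $2^{j_*(a_0-a_1)}\simeq\|g\|_{\dot{H}^{s}_{p_1}}/\|g\|_{L^{p_0}}$, yields $\|g\|_{\dot{H}^{\kappa}_{p}}\lesssim\|g\|_{L^{p_0}}^{-a_1/(a_0-a_1)}\|g\|_{\dot{H}^{s}_{p_1}}^{a_0/(a_0-a_1)}$, and one checks $a_0/(a_0-a_1)$ coincides with the $\beta$ of the scaling analysis, $1-\beta=-a_1/(a_0-a_1)$.

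The hard part — and the reason one should cite \cite{Hajaiej-Molinet-Ozawa-Wang-2011} rather than dismiss this as routine — is twofold. First, the clean Bernstein steps above require the Lebesgue exponents to be ordered ($p_0\le p$ for the low-frequency bound, $p_1\le p$ for the high-frequency one); for general admissible triples one must insert an extra Sobolev embedding at each dyadic block or break the statement into elementary cases and recombine them by interpolation, while checking that the dyadic exponents keep the signs needed for summability over the whole admissible range. Second, the endpoints $\beta=\kappa/s$ and $\beta=1$ are genuinely critical: at $\beta=1$ the inequality is the trivial identity $\dot{H}^{\kappa}_{p}=\dot{H}^{s}_{p_1}$, whereas the lower endpoint corresponds to a borderline Hardy--Littlewood--Sobolev-type embedding where the naive geometric sum only diverges logarithmically, and a refinement through $\dot{B}^{\kappa}_{p,\infty}$ or Lorentz spaces is needed. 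Away from these endpoints the scheme above delivers the estimate.
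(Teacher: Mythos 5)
The paper offers no proof of this lemma at all: it is quoted verbatim from the cited reference \cite{Hajaiej-Molinet-Ozawa-Wang-2011} and used as a black box, so there is nothing internal to compare your argument against. On its own merits, your Littlewood--Paley reconstruction is the standard proof and its core is sound: the scaling computation does pin down the stated $\beta$; the reduction $\|g\|_{\dot H^\kappa_p}\lesssim\sum_j 2^{j\kappa}\|\Delta_jg\|_{L^p}$, the two Bernstein bounds with exponents $a_0$, $a_1$, the verification that $a_0-a_1$ is $n$ times the denominator of $\beta$ and that summability of the two geometric tails is equivalent to $0<\beta<1$, and the optimization over the splitting index $j_*$ are all correct. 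You are also right, and honest, that the clean Bernstein chain needs $p_0\leqslant p$ and $p_1\leqslant p$ (which happens to cover every application made in this paper, where $p_0=p_1=2$ and the target Lebesgue exponent is $\geqslant 2$), and that the general admissible range requires extra embeddings or a case analysis.

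Two small corrections to your discussion of the endpoints. At $\beta=1$ the inequality is not a ``trivial identity $\dot H^\kappa_p=\dot H^s_{p_1}$'': the parameter relation forces $\kappa-\frac np=s-\frac n{p_1}$ with $\kappa<s$, so this endpoint is the homogeneous Sobolev embedding $\dot H^s_{p_1}\hookrightarrow\dot H^\kappa_p$, a genuine (if classical) inequality rather than an equality of spaces. At $\beta=\kappa/s$ the parameter relation reduces to $\frac1p=\frac{1-\beta}{p_0}+\frac\beta{p_1}$, i.e.\ this is exactly the complex-interpolation endpoint $[L^{p_0},\dot H^s_{p_1}]_\beta=\dot H^{\beta s}_p$, which can be handled by interpolation (or by H\"older on each dyadic block) without necessarily passing through $\dot B^\kappa_{p,\infty}$ or Lorentz refinements. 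With those adjustments your outline is a correct and self-contained route to the lemma; since the paper only ever invokes it away from these degenerate configurations, the interior case you prove in detail is what actually gets used.
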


\subsection{Proof of Theorem \ref{Thm-GESDS}: Global (in time) existence of Sobolev solution}\label{Subsec-GESDS}
$\ \ \ \ $First of all, from the third statement of Proposition \ref{Prop-Stable} and some estimates in Propositions \ref{Prop-(1,3)}-\ref{Prop-(3)} as well as Corollary \ref{Coro-New}, we may claim $u^{\lin}\in X(T)$ such that
\begin{align}\label{E1}
\|u^{\lin}\|_{X(T)}\lesssim \epsilon\|(u_0,u_1,u_2)\|_{\ml{B}_{\sigma}}.
\end{align}
Due to $\ml{N}u=u^{\lin}+u^{\non}$, our first goal is to demonstrate 
\begin{align*}
\|u^{\non}\|_{X(T)}\lesssim\|u\|^p_{X(T)}\ \ \mbox{uniformly}\ \ T,
\end{align*}
 under some conditions for the exponent $p$. To do this, we now need to estimate the nonlinearity $|u(\tau,\cdot)|^p$ in the $L^1$ and $L^2$ norms, respectively. Let us apply the fractional Gagliardo-Nirenberg inequality (see Lemma \ref{fractionalgagliardonirenbergineq}) to arrive at
\begin{align*}
\|\,|u(\tau,\cdot)|^p\|_{L^1}&\lesssim \|u(\tau,\cdot)\|_{L^2}^{(1-\beta_1)p}\|u(\tau,\cdot)\|_{\dot{H}^{\frac{4}{3}\sigma}}^{\beta_1 p}\lesssim(1+\tau)^{-\frac{3n-4\sigma}{2\sigma}p+\frac{3n}{2\sigma}}\|u\|_{X(T)}^p
\end{align*}
for $\tau\in[0,T]$ with $\beta_1:=\frac{3n}{4\sigma}(\frac{1}{2}-\frac{1}{p})\in[0,1]$, that is $2\leqslant p\leqslant\frac{6n}{(3n-8\sigma)_+}$. Analogously, 
\begin{align}\label{Nonlinearity-02}
\|\,|u(\tau,\cdot)|^p\|_{L^2}&\lesssim \|u(\tau,\cdot)\|_{L^2}^{(1-\beta_2)p}\|u(\tau,\cdot)\|_{\dot{H}^{\frac{4}{3}\sigma}}^{\beta_2 p}\lesssim(1+\tau)^{-\frac{3n-4\sigma}{2\sigma}p+\frac{3n}{4\sigma}}\|u\|_{X(T)}^p
\end{align}
for $\tau\in[0,T]$ with $\beta_2:=\frac{3n}{4\sigma}(\frac{1}{2}-\frac{1}{2p})\in[0,1]$, that is $1<p\leqslant\frac{3n}{(3n-8\sigma)_+}$. For these reasons, it shows
\begin{align}\label{Nonlinearity-01}
\|\,|u(\tau,\cdot)|^p\|_{L^2\cap L^1}\lesssim(1+\tau)^{-\frac{3n-4\sigma}{2\sigma}p+\frac{3n}{2\sigma}}\|u\|_{X(T)}^p,
\end{align}
where we restricted
\begin{align}\label{Cond-01}
2\leqslant p\leqslant\frac{3n}{(3n-8\sigma)_+}\ \ \mbox{if}\ \ 1\leqslant n\leqslant \frac{16}{3}\sigma.
\end{align}
The restriction \eqref{Cond-01} originates from some applications of the Gagliardo-Nirenberg inequality technically.
In order to estimate the solution itself in the $L^2$ norm, we apply the derived $(L^2\cap L^1)-L^2$ estimate \eqref{D1} in $[0,t]$ as well as the estimate for power nonlinearity \eqref{Nonlinearity-01} to get
\begin{align*}
&(1+t)^{\frac{3n-8\sigma}{4\sigma}}\|u^{\non}(t,\cdot)\|_{L^2}\lesssim(1+t)^{\frac{3n-8\sigma}{4\sigma}} \int_0^t(1+t-\tau)^{-\frac{3n-8\sigma}{4\sigma}}(1+\tau)^{-\frac{3n-4\sigma}{2\sigma}p+\frac{3n}{2\sigma}}\mathrm{d}\tau\,\|u\|_{X(T)}^p\\
&\qquad\lesssim \int_0^{t/2}(1+\tau)^{-\frac{3n-4\sigma}{2\sigma}p+\frac{3n}{2\sigma}}\mathrm{d}\tau\,\|u\|_{X(T)}^p+(1+t)^{\frac{9n-8\sigma}{4\sigma}-\frac{3n-4\sigma}{2\sigma}p}\int_{t/2}^t(1+t-\tau)^{-\frac{3n-8\sigma}{4\sigma}}\mathrm{d}\tau\,\|u\|_{X(T)}^p.
\end{align*}
From the assumption
\begin{align}\label{Cond-02}
p>p_{\mathrm{crit}}(n,\sigma)=1+\frac{6\sigma}{3n-4\sigma}\ \ \mbox{with}\ \ \frac{4}{3}\sigma<n<4\sigma,
\end{align}
one derives
\begin{align*}
-\frac{3n-4\sigma}{2\sigma}p+\frac{3n}{2\sigma}<-1\ \ \mbox{and}\ \ -\frac{3n-8\sigma}{4\sigma}>-1,
\end{align*}
which lead to
\begin{align}\label{E2}
(1+t)^{\frac{3n-8\sigma}{4\sigma}}\|u^{\non}(t,\cdot)\|_{L^2}\lesssim\|u\|_{X(T)}^p.
\end{align}
Associating the conditions \eqref{Cond-01} with \eqref{Cond-02}, we have
\begin{align}\label{Cond-03}
p_{\mathrm{crit}}(n,\sigma)=\max\big\{2,p_{\mathrm{crit}}(n,\sigma) \big\}<p\leqslant\frac{3n}{(3n-8\sigma)_+}
\end{align}
by taking $\frac{4}{3}\sigma<n\leqslant\frac{10}{3}\sigma$.
 Note that the dimension should fulfill \eqref{Restriction(n,sigma)}.   Next, we apply the derived $(L^2\cap L^1)-\dot{H}^{\frac{4}{3}\sigma}$ estimate \eqref{D2} in $[0,t/2]$ associated with \eqref{Nonlinearity-01}, and $L^2-\dot{H}^{\frac{4}{3}\sigma}$ estimate \eqref{D3} in $[t/2,t]$ associated with \eqref{Nonlinearity-02} to obtain
 \begin{align}
 	(1+t)^{\frac{3n}{4\sigma}}\|u^{\non}(t,\cdot)\|_{\dot{H}^{\frac{4}{3}\sigma}}&\lesssim\int_0^{t/2}(1+\tau)^{-\frac{3n-4\sigma}{2\sigma}p+\frac{3n}{2\sigma}}\mathrm{d}\tau\,\|u\|_{X(T)}^p+(1+t)^{\frac{3n}{2\sigma}-\frac{3n-4\sigma}{2\sigma}p}\int_{t/2}^t\mathrm{d}\tau \,\|u\|_{X(T)}^p\notag\\
 	&\lesssim\|u\|_{X(T)}^p\label{E3}
 \end{align}
 under the condition \eqref{Cond-03}. Thus, the combination of \eqref{E1}, \eqref{E2} as well as \eqref{E3} immediately completes
 \begin{align*}
 \|u^{\lin}\|_{X(T)}+\|u^{\non}\|_{X(T)}\lesssim \epsilon\|(u_0,u_1,u_2)\|_{\ml{B}_{\sigma}}+ \|u\|_{X(T)}^p,
 \end{align*}
namely,  the derivation of the crucial estimate \eqref{Cruc-01}.

With the aim of proving \eqref{Cruc-02}, we notice that
\begin{align*}
	\|\ml{N}u-\ml{N}\bar{u}\|_{X(T)}=\Big\| \int_0^tE_2(t-\tau,\cdot)\ast_{(x)}\big(|u(\tau,\cdot)|^p-|\bar{u}(\tau,\cdot)|^p\big)\,\mathrm{d}\tau \Big\|_{X(T)}.
\end{align*}
Thanks to H\"older's inequality, one gets
\begin{align*}
	\|\,|u(\tau,\cdot)|^p-|\bar{u}(\tau,\cdot)|^p\|_{L^m}\lesssim\|u(\tau,\cdot)-\bar{u}(\tau,\cdot)\|_{L^{mp}}\big(\|u(\tau,\cdot)\|^{p-1}_{L^{mp}}+\|\bar{u}(\tau,\cdot)\|^{p-1}_{L^{mp}}\big),
\end{align*}
with $m=1,2$. Finally, employing the fractional Gagliardo-Nirenberg inequality to estimate three terms on the right-hand side of the previous inequality, we are able to derive our desired estimate \eqref{Cruc-02}. Then, our proof is finished.

 \begin{remark}\label{Rem-Scrop-02}
Concerning another case
\begin{align*}
p>\max\left\{\frac{9n-8\sigma}{2(3n-4\sigma)_+},p_{\mathrm{crit}}(n,\sigma) \right\}\ \ \mbox{and}\ \ 2\leqslant p\leqslant\frac{3n}{(3n-8\sigma)_+},
\end{align*}
we can also prove \eqref{E2} for $4\sigma\leqslant n\leqslant\frac{16}{3}\sigma$ because
\begin{align*}
	(1+t)^{\frac{9n-8\sigma}{4\sigma}-\frac{3n-4\sigma}{2\sigma}p}\int_{t/2}^t(1+t-\tau)^{-\frac{3n-8\sigma}{4\sigma}}\mathrm{d}\tau\lesssim 1.
\end{align*}
However, it does not imply the lower bound with the critical exponent $p_{\mathrm{crit}}(n,\sigma)$, and the general global (in time) existence result is beyond the scope of the present paper.
 \end{remark}
\begin{remark}\label{Rem-spe}
For $n=\frac{8}{3}\sigma$ when $\eta\in(3,\infty)$, we just need to modify the norm \eqref{NEW002} by
\begin{align*}
\|u\|_{X(T)}:=\sup\limits_{t\in[0,T]}\left([\ln(\mathrm{e}+t)]^{-1}\|u(t,\cdot)\|_{L^2}+(1+t)^2\|u(t,\cdot)\|_{\dot{H}^{\frac{4}{3}\sigma}}\right)
\end{align*}
motivated from Corollary \ref{Coro-New}. At this time, the estimate \eqref{D1} will be changed into
\begin{align*}
	\|E_2(t,\cdot)\ast_{(x)}g(t,\cdot)\|_{L^2}\lesssim \ln (\mathrm{e}+t)\|g(t,\cdot)\|_{L^2\cap L^1}
\end{align*}
for $n=\frac{8}{3}\sigma$ when $\eta\in(3,\infty)$. Again, by applying Proposition \ref{fractionalgagliardonirenbergineq}, we arrive at
\begin{align*}
\|\,|u(\tau,\cdot)|^p\|_{L^2\cap L^1}&\lesssim [\ln(\mathrm{e}+\tau)]^2(1+\tau)^{-2(p-2)}\|u\|_{X(T)}^p,\\
\|\,|u(\tau,\cdot)|^p\|_{L^2}&\lesssim [\ln(\mathrm{e}+\tau)](1+\tau)^{-2(p-1)}\|u\|_{X(T)}^p,
\end{align*}
under the restriction \eqref{Cond-01}. Then,  with the same procedure as the above one, we also can demonstrate \eqref{Cruc-01} provided that the exponent $p$ satisfies \eqref{Cond-03}.
\end{remark}

\begin{remark}\label{Rem-Further_regularity}
Providing that one is interested in the regularities of global (in time) solutions $u$, under the same conditions as those in Propositions \ref{Prop-(1,3)}-\ref{Prop-(3)} and Corollary \ref{Coro-New}, if \eqref{Restriction(n,sigma)} holds, the following estimates for the linearized Cauchy problem \eqref{Eq-Linear-Third-PDE} can be proved easily:
\begin{align*}
	\|\partial_t^jv(t,\cdot)\|_{L^2}&\lesssim(1+t)^{-j-\frac{3n-8\sigma}{4\sigma}}\|(v_0,v_1,v_2)\|_{\ml{B}_{\sigma}},\\
	\|\partial_t^jv(t,\cdot)\|_{\dot{H}^{\frac{2}{3}\sigma(2-j)}}&\lesssim(1+t)^{-\frac{3n}{4\sigma}}\|(v_0,v_1,v_2)\|_{\ml{B}_{\sigma}},
\end{align*}
with $j=0,1,2$, which is a generalization of Propositions \ref{Prop-(1,3)}-\ref{Prop-(3)}. Hence, we may introduce the solution space
\begin{align*}
	Y(T):=\ml{C}\big([0,T],H^{\frac{4}{3}\sigma}\big)\cap \ml{C}^1\big([0,T],H^{\frac{2}{3}\sigma}\big)\cap\ml{C}^2\big([0,T],L^2\big)
\end{align*}
for $T>0$ and any $\sigma\in(0,\infty)$, equipped the time-weighted Sobolev norm
\begin{align*}
	\|u\|_{Y(T)}:=\sup\limits_{t\in[0,T]}\left(\,\sum\limits_{j=0,1,2}(1+t)^{j+\frac{3n-8\sigma}{4\sigma}}\|\partial_t^ju(t,\cdot)\|_{L^2}+(1+t)^{\frac{3n}{4\sigma}}\sum\limits_{j=0,1}\|\partial_t^ju(t,\cdot)\|_{\dot{H}^{\frac{2}{3}\sigma(2-j)}}\right).
\end{align*}
By repeating some analogous procedure as we have treated $\|u^{\non}(t,\cdot)\|_{L^2}$ and $\|u^{\non}(t,\cdot)\|_{\dot{H}^{\frac{4}{3}\sigma}}$ in Subsection \ref{Subsec-GESDS}, we may demonstrate \eqref{Cruc-01}-\eqref{Cruc-02} in  $Y(T)$ instead of $X(T)$. Namely, by lengthy but straightforward computations, under the same condition of the exponent $p$ and regularities for small initial data, there is a uniquely determined Sobolev solution $u\in Y(\infty)$ to the third-order (in time) evolution equations \eqref{Eq-Third-PDE} with $\eta\in(1,\infty)$. Furthermore, the solution satisfies the following estimates:
\begin{align*}
	\|\partial_t^ju(t,\cdot)\|_{L^2}&\lesssim \epsilon(1+t)^{-j-\frac{3n-8\sigma}{4\sigma}}\|(u_0,u_1,u_2)\|_{\ml{B}_{\sigma}},\\
	\|\partial_t^ju(t,\cdot)\|_{\dot{H}^{\frac{2}{3}\sigma(2-j)}}&\lesssim \epsilon(1+t)^{-\frac{3n}{4\sigma}}\|(u_0,u_1,u_2)\|_{\ml{B}_{\sigma}},
\end{align*}
with $j=0,1,2$.
\end{remark}

%\newpage 
\section{Blow-up for  the semilinear Cauchy problem}\label{Sec-Blow-up}
\subsection{Preliminary with fractional Laplacian}
$\ \ \ \ $Before proving blow-up of weak solutions to the semilinear problem \eqref{Eq-Third-PDE} with fractional Laplacian, let us recall some auxiliary lemmas derived in \cite{Dao-Reissig=2020,D'Abbicco-Fujiwara=2021}.
\begin{lemma} \label{lemma2.1}
Let $q>n$ and $\sigma\in(0,\infty)$. Setting $\tilde{\sigma}=\sigma-[\sigma]$, then it holds
\begin{align*}
|(-\Delta)^{\sigma}\langle x\rangle^{-q}|\lesssim\langle x\rangle^{-q_{\sigma}}\ \ \mbox{for}\ \ x\in\mb{R}^n,
\end{align*}
where $q_{\sigma}=q+2\sigma$ if $\sigma$ is an integer, or $q_{\sigma}=n+2\tilde{\sigma}$, otherwise.
\end{lemma}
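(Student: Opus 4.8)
The plan is to distinguish the integer and non-integer cases, and in the latter to factor $(-\Delta)^{\sigma}=(-\Delta)^{\tilde\sigma}(-\Delta)^{[\sigma]}$ as a composition of Fourier multipliers with $\tilde\sigma\in(0,1)$, then treat the smooth local operator $(-\Delta)^{[\sigma]}$ and the nonlocal operator $(-\Delta)^{\tilde\sigma}$ one after the other. Throughout it suffices to treat $|x|$ large, since on any fixed ball $(-\Delta)^{\sigma}\langle x\rangle^{-q}$ is a bounded continuous function while $\langle x\rangle^{-q_{\sigma}}$ is bounded below, so the claimed estimate is trivial there.

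First I would handle the differential part. An elementary induction on $|\beta|$ gives $|\partial_x^{\beta}\langle x\rangle^{-q}|\lesssim\langle x\rangle^{-q-|\beta|}$, because each differentiation of a term of the form $(\text{polynomial of degree }\le k)\cdot\langle x\rangle^{-q-2k}$ again produces terms of the same shape with $k$ increased by one, i.e. decaying one power faster. Summing the order-$2\sigma$ contributions, this already settles the integer case: $|(-\Delta)^{\sigma}\langle x\rangle^{-q}|\lesssim\langle x\rangle^{-q-2\sigma}=\langle x\rangle^{-q_{\sigma}}$. Keeping track of the lower-order derivatives as well, the same computation yields $|\partial_x^{\alpha}(-\Delta)^{[\sigma]}\langle x\rangle^{-q}|\lesssim\langle x\rangle^{-q-2[\sigma]-|\alpha|}$ for every multi-index $\alpha$. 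Writing $g(x):=(-\Delta)^{[\sigma]}\langle x\rangle^{-q}$, we thus have $g\in\ml{C}^{\infty}$, $|g(x)|\lesssim\langle x\rangle^{-q-2[\sigma]}$, $|D^2g(x)|\lesssim\langle x\rangle^{-q-2[\sigma]-2}$, and, since $q+2[\sigma]>n$, also $g\in L^1$.

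For non-integer $\sigma$ it remains to bound $(-\Delta)^{\tilde\sigma}g$ with $\tilde\sigma\in(0,1)$, which I would represent by the singular integral
\begin{align*}
(-\Delta)^{\tilde\sigma}g(x)=c_{n,\tilde\sigma}\int_{\mb{R}^n}\frac{2g(x)-g(x+z)-g(x-z)}{|z|^{n+2\tilde\sigma}}\,\mathrm{d}z,
\end{align*}
the symmetric second-difference form making the integrand absolutely integrable near $z=0$. Fix $|x|\gg1$ and split the domain at scale $\langle x\rangle$. On the near zone $|z|\leqslant\tfrac12\langle x\rangle$ one has $\langle x\pm\theta z\rangle\simeq\langle x\rangle$, so a second-order Taylor expansion and the bound on $D^2g$ give $|2g(x)-g(x+z)-g(x-z)|\lesssim|z|^2\langle x\rangle^{-q-2[\sigma]-2}$; integrating $|z|^{2-n-2\tilde\sigma}$ over $|z|\leqslant\tfrac12\langle x\rangle$ contributes $\langle x\rangle^{-q-2[\sigma]-2}\cdot\langle x\rangle^{2-2\tilde\sigma}=\langle x\rangle^{-q-2\sigma}$. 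On the far zone $|z|\geqslant\tfrac12\langle x\rangle$ I would bound the numerator by $2|g(x)|+|g(x+z)|+|g(x-z)|$: the $g(x)$-term gives $|g(x)|\,\langle x\rangle^{-2\tilde\sigma}\lesssim\langle x\rangle^{-q-2\sigma}$, while the two translated terms, using $|z|^{-n-2\tilde\sigma}\lesssim\langle x\rangle^{-n-2\tilde\sigma}$ and $g\in L^1$, give $\lesssim\langle x\rangle^{-n-2\tilde\sigma}\|g\|_{L^1}$. Since $q>n$ forces $q+2\sigma>n+2\tilde\sigma$, all pieces are dominated by $\langle x\rangle^{-n-2\tilde\sigma}=\langle x\rangle^{-q_{\sigma}}$, which finishes the proof.

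The main obstacle is the nonlocal estimate for $(-\Delta)^{\tilde\sigma}$: one has to choose the cut-off at scale $\langle x\rangle$ rather than at a fixed scale, use the second-difference form to tame the behaviour near $z=0$, and — most importantly — recognize that the genuinely slow rate $\langle x\rangle^{-n-2\tilde\sigma}$ originates entirely from the far zone, i.e. from pairing the heavy tail $|z|^{-n-2\tilde\sigma}$ of the Riesz kernel with the finite $L^1$-mass of $g$, and is strictly weaker than the naively expected $\langle x\rangle^{-q-2\sigma}$. The integer case and the pointwise derivative bounds on $g$ are routine.
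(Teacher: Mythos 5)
Your proof is correct. Note that the paper itself does not prove this lemma: it is recalled from the cited references \cite{Dao-Reissig=2020,D'Abbicco-Fujiwara=2021}, and your argument --- factoring $(-\Delta)^{\sigma}=(-\Delta)^{\tilde\sigma}(-\Delta)^{[\sigma]}$, using the elementary derivative bounds $|\partial_x^{\beta}\langle x\rangle^{-q}|\lesssim\langle x\rangle^{-q-|\beta|}$ for the integer part, and estimating the second-difference integral for $(-\Delta)^{\tilde\sigma}$ with the cut at scale $\langle x\rangle$ --- is essentially the standard proof given there. You also correctly identify the source of the weaker rate $\langle x\rangle^{-n-2\tilde\sigma}$ in the non-integer case (the tail of the Riesz kernel paired with the $L^1$-mass of $(-\Delta)^{[\sigma]}\langle x\rangle^{-q}$, available since $q>n$), which is exactly why the lemma's exponent $q_{\sigma}$ saturates at $n+2\tilde\sigma$ there.
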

\begin{lemma} \label{lemma2.2}
 Setting $\phi(x):= \langle x\rangle^{-q}$ and $\phi_R(x):= \phi(R^{-1}x)$ for any $R>0$, then it holds
	\begin{equation*}
		(-\Delta)^\gamma (\phi_R)(x)= R^{-2\gamma} \big((-\Delta)^\gamma \phi \big)(R^{-1}x)
	\end{equation*}
for any number $\gamma\geqslant 0$.
\end{lemma}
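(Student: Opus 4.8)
The plan is to reduce the identity to the elementary scaling behaviour of the Fourier multiplier $|\xi|^{2\gamma}$, which defines $(-\Delta)^{\gamma}$ for every $\gamma\geqslant0$ (the integer powers being included as the trivial case). Since $\phi(x)=\langle x\rangle^{-q}$ is smooth and, together with all its derivatives, decays fast enough for $\ml{F}(\phi)$ to be a bounded continuous function, the operation $(-\Delta)^{\gamma}\phi=\ml{F}^{-1}(|\xi|^{2\gamma}\ml{F}(\phi))$ is well defined, and the same holds for $\phi_R$; this legitimacy is also implicit in Lemma \ref{lemma2.1}, which already manipulates $(-\Delta)^{\sigma}\langle x\rangle^{-q}$.

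First I would record the dilation rule for the Fourier transform: with $\phi_R(x)=\phi(R^{-1}x)$ one has $\ml{F}(\phi_R)(\xi)=R^{n}\,\ml{F}(\phi)(R\xi)$ by the change of variables $x\mapsto Rx$. Multiplying by the symbol and substituting $\eta=R\xi$, so that $|\xi|^{2\gamma}=R^{-2\gamma}|\eta|^{2\gamma}$, gives
\begin{align*}
|\xi|^{2\gamma}\ml{F}(\phi_R)(\xi)=R^{n}|\xi|^{2\gamma}\ml{F}(\phi)(R\xi)=R^{n-2\gamma}\big(|\eta|^{2\gamma}\ml{F}(\phi)(\eta)\big)\big|_{\eta=R\xi}=R^{n-2\gamma}\,\ml{F}\big((-\Delta)^{\gamma}\phi\big)(R\xi).
\end{align*}
Applying $\ml{F}^{-1}$ and using the dilation rule once more, now in the form $\ml{F}^{-1}\big(R^{n}g(R\,\cdot)\big)(x)=g^{\vee}(R^{-1}x)$, we obtain precisely $(-\Delta)^{\gamma}(\phi_R)(x)=R^{-2\gamma}\big((-\Delta)^{\gamma}\phi\big)(R^{-1}x)$, which is the claim.

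As an alternative (and perhaps more elementary) route, for $\gamma\in(0,1)$ one may bypass the Fourier side and use the singular-integral representation $(-\Delta)^{\gamma}u(x)=c_{n,\gamma}\,\mathrm{p.v.}\int_{\mb{R}^n}\frac{u(x)-u(y)}{|x-y|^{n+2\gamma}}\,\mathrm{d}y$: the change of variables $x=Rw$, $y=Rz$ produces a Jacobian $R^{n}$ against a kernel factor $R^{-(n+2\gamma)}$, hence exactly the factor $R^{-2\gamma}$. The general case $\gamma\geqslant0$ then follows by writing $\gamma=[\gamma]+(\gamma-[\gamma])$, composing with the obvious integer scaling $(-\Delta)^{m}(\phi_R)(x)=R^{-2m}\big((-\Delta)^{m}\phi\big)(R^{-1}x)$, and combining the exponents.

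I do not expect any genuine obstacle here. The only point requiring a sentence of care is the justification that $(-\Delta)^{\gamma}$ may actually be applied to $\phi$ and $\phi_R$ and that the above manipulations are admissible; this is guaranteed by the smoothness and polynomial decay of $\langle x\rangle^{-q}$. Beyond that, the statement is purely a consequence of the homogeneity of degree $2\gamma$ of the symbol $|\xi|^{2\gamma}$ (equivalently, of the kernel $|x-y|^{-(n+2\gamma)}$ of degree $-(n+2\gamma)$).
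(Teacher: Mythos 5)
Your proof is correct: the identity is exactly the homogeneity of degree $2\gamma$ of the symbol $|\xi|^{2\gamma}$ combined with the dilation rule for the Fourier transform, and your computation (as well as the alternative via the singular-integral kernel for $\gamma\in(0,1)$ plus composition with integer powers) is sound. Note that the paper does not prove this lemma at all — it is quoted from the cited references of Dao--Reissig and D'Abbicco--Fujiwara — and the standard argument there is precisely the Fourier-multiplier scaling you give, so your approach matches the intended one.
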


Let us define some test functions with suitable scale that strongly relies on the new adjoint operator of \eqref{Eq-Third-PDE}, i.e.
\begin{align*}
-\partial_t^3+\ml{A}+\eta\ml{A}^{\frac{1}{3}}\partial_t^2-\eta\ml{A}^{\frac{2}{3}}\partial_t.
\end{align*}
 For one thing, we set $\varphi(x):=\langle x\rangle^{-n-2s_{\sigma}}$, where $s_{\sigma}\in(0,1)$ is chosen as an arbitrary constant if $\sigma\in\mb{N}$, and as a small constant satisfying $s_{\sigma}\in(0,\sigma-[\sigma])$ if $\sigma\in\mb{R}_+\backslash\mb{N}$.  For another, we introduce $\zeta\in\ml{C}^{\infty}$ with its support $\zeta(t)\subset[0,1]$ such that $\zeta(t)=1$ if $t\in[0,\frac{1}{2}]$, non-increasing, and $\zeta(t)=0$ if $t\in[1,\infty)$. Eventually, we put new test functions
\begin{align*}
	\varphi_R(x):=\langle R^{-1}K^{-1}x\rangle^{-n-2s_{\sigma}}\ \ \mbox{and}\ \ \zeta_R(t):=\zeta(R^{-\frac{2}{3}\sigma}t)
\end{align*}
with a suitable constant $K\geqslant 1$ and a sufficiently large parameter $R\gg1$.
\subsection{Proof of Theorem \ref{Thm-Blow-up}: Blow-up of weak solutions}
$\ \ \ \ $To begin with the proof, according to the nonlinearity, let us introduce
\begin{align*}
	I_{R}:=\int_0^{\infty}\int_{\mb{R}^n}|u(t,x)|^p\varphi_R(x)\zeta_R(t)\mathrm{d}x\mathrm{d}t.
\end{align*}
Assume by contradiction that $u=u(t,x)$ is a global (in time) weak solution to the semilinear Cauchy problem \eqref{Eq-Third-PDE} with a general parameter $\eta\in(0,\infty)$. Thus, it holds that
\begin{align}
	&\epsilon\int_{\mb{R}^n}\left(\eta u_0(x)\ml{A}^{\frac{2}{3}}\varphi_R(x)+\eta u_1(x)\ml{A}^{\frac{1}{3}}\varphi_R(x)+u_2(x)\varphi_R(x)\right)\mathrm{d}x+I_R\notag\\
	&\qquad=\int_0^{\infty}\int_{\mb{R}^n}u(t,x)\left(-\partial_t^3+\ml{A}+\eta\ml{A}^{\frac{1}{3}}\partial_t^2-\eta\ml{A}^{\frac{2}{3}}\partial_t\right)\big(\varphi_R(x)\zeta_R(t)\big)\mathrm{d}x\mathrm{d}t,\label{Weak_Solution}
\end{align}
carrying $\ml{A}=(-\Delta)^{\sigma}$ with $\sigma\in(0,\infty)$, that is, the solution $u$ has such properties that all the integrals are well-defined for $R>0$. In the first integral of \eqref{Weak_Solution}, the operator $\ml{A}^{\alpha}$ acting on the test function $\varphi_R(x)$ only occurs for the initial data $u_0(x),u_1(x)$, which leads to $R^{-\frac{4}{3}\sigma}$ as well as $R^{-\frac{2}{3}\sigma}$, respectively. It avoids sign conditions for $u_0(x)$ and $u_1(x)$ later by taking $R\gg1$.

Applying H\"older's inequality, and taking $K=1$, we may estimate
\begin{align}
&\epsilon\int_{\mb{R}^n}\left(\eta u_0(x)\ml{A}^{\frac{2}{3}}\varphi_R(x)+\eta u_1(x)\ml{A}^{\frac{1}{3}}\varphi_R(x)+u_2(x)\varphi_R(x)\right)\mathrm{d}x+I_R\notag\\
&\qquad\lesssim\int_0^{\infty}\int_{\mb{R}^n}\left|u(t,x)\left(-\partial_t^3+\ml{A}+\eta\ml{A}^{\frac{1}{3}}\partial_t^2-\eta\ml{A}^{\frac{2}{3}}\partial_t\right)\big(\varphi_R(x)\zeta_R(t)\big)\right|\mathrm{d}x\mathrm{d}t\notag\\
&\qquad\lesssim R^{-2\sigma}\int_0^{R^{\frac{2}{3}\sigma}}\int_{\mb{R}^n}|u(t,x)|\varphi_R(x)\big(|\zeta_R'''(t)|+|\zeta_R(t)|+|\zeta_R''(t)|+|\zeta_R'(t)|\big)\mathrm{d}x\mathrm{d}t\notag\\
&\qquad\lesssim R^{-2\sigma}I_R^{\frac{1}{p}}\left(R^{n+\frac{2}{3}\sigma}\int_{\mb{R}^n}\langle\tilde{x}\rangle^{-n-2s_{\sigma}}\mathrm{d}\tilde{x}\right)^{\frac{1}{p'}}\notag\\
&\qquad\lesssim R^{\frac{3n+2\sigma}{3p'}-2\sigma}I_R^{\frac{1}{p}},\label{Blow-up-01}
\end{align}
where we employed Lemmas \ref{lemma2.1} and \ref{lemma2.2} to treat the nonlocal terms. From the condition
\begin{align*}
1<p<p_{\mathrm{crit}}(n,\sigma)=\frac{3n+2\sigma}{(3n-4\sigma)_+}\ \ \mbox{that is}\ \ \frac{1}{p'}<\frac{6\sigma}{3n+2\sigma},
\end{align*}
we know $R^{\frac{3n+2\sigma}{3}-2\sigma p'}\to0$ while $R\to\infty$. Then, recalling the crucial estimate \eqref{Blow-up-01} with Young's inequality, we state the contradiction as follows:
\begin{align}\label{E4} 
0\leqslant \lim\limits_{R\to\infty}I_R\lesssim \lim\limits_{R\to\infty}\left(R^{\frac{3n+2\sigma}{3}-2\sigma p'}-\epsilon\int_{\mb{R}^n}u_2(x)\varphi_R(x)\mathrm{d}x\right)<0,
\end{align}
thanks to the sign assumption \eqref{Sign_Assumption}. Therefore, it implies that any non-trivial weak solution may blow up in finite time. To more accurately describe the blow-up time for this weak solution, we take 
\begin{align*}
R\uparrow T_{\epsilon,\mathrm{w}}^{\frac{3}{2\sigma}}\ \ \mbox{with}\ \ \sigma\in(0,\infty)	
\end{align*}
 in \eqref{E4}. Consequently, a contradiction  immediately occurs provided that
 \begin{align*}
 T_{\epsilon,\mathrm{w}}^{\frac{3}{2\sigma}\left(\frac{3n+2\sigma}{3}-2\sigma p'\right)}\leqslant  C_{u_2}\epsilon,
 \end{align*}
which leads to 
\begin{align}\label{Upper-Subcrit}
	T_{\epsilon,\mathrm{w}}\leqslant C\epsilon^{-\frac{2\sigma}{6\sigma p'-(3n+2\sigma)}}\ \ \mbox{if}\ \ 1<p<p_{\mathrm{crit}}(n,\sigma).
\end{align}
It completes the proof of upper bound estimates for the lifespan in the subcritical case, namely, the first subcase of Theorem \ref{Thm-Lifespan-upper}.

Turning to the critical case $p=p_{\mathrm{crit}}(n,\sigma)$, we first deduce that $I_R$ is uniformly bounded, namely, $u\in L^p([0,\infty)\times\mb{R}^n)$. Moreover, from \eqref{Blow-up-01} associated with Young's inequality, we claim
\begin{align*}
\lim\limits_{R\to\infty}\int_0^{\infty}\int_{\mb{R}^n}u(t,x)\left(-\partial_t^3+\eta\ml{A}^{\frac{1}{3}}\partial_t^2-\eta\ml{A}^{\frac{2}{3}}\partial_t\right)\big(\varphi_R(x)\zeta_R(t)\big)\mathrm{d}x\mathrm{d}t=0
\end{align*}
because of $\zeta_R(t)\equiv1$ when $t\in[0,\frac{1}{2}R^{\frac{2}{3}\sigma}]$. It follows that
\begin{align*}
	0\leqslant \lim\limits_{R\to\infty}I_R&=\lim\limits_{R\to\infty}\int_0^{\infty}\int_{\mb{R}^n}u(t,x)\left(-\partial_t^3+\eta\ml{A}^{\frac{1}{3}}\partial_t^2-\eta\ml{A}^{\frac{2}{3}}\partial_t\right)\big(\varphi_R(x)\zeta_R(t)\big)\mathrm{d}x\mathrm{d}t\\
	&\quad+\lim\limits_{R\to\infty}\int_0^{\infty}\int_{\mb{R}^n}u(t,x)\ml{A}\varphi_R(x)\zeta_R(t)\mathrm{d}x\mathrm{d}t-\epsilon\lim\limits_{R\to\infty}\int_{\mb{R}^n}u_2(x)\varphi_R(x)\mathrm{d}x \\
	&\quad-\epsilon\lim\limits_{R\to\infty}\int_{\mb{R}^n}\left(\eta u_0(x)\ml{A}^{\frac{2}{3}}\varphi_R(x)+\eta u_1(x)\ml{A}^{\frac{1}{3}}\varphi_R(x)\right)\mathrm{d}x\\
	&\lesssim K^{n-2\sigma p'}-\epsilon\lim\limits_{R\to\infty}\int_{\mb{R}^n}u_2(x)\varphi_R(x)\mathrm{d}x,
\end{align*}
in which the term with $K$ comes from the nonlocal term without time-derivative $\ml{A}\varphi_R(x)$ thanks to Lemma \ref{lemma2.2}. Finally, the contradiction occurs for taking sufficient large $K\gg1$, due to \eqref{Sign_Assumption} and $n-2\sigma p'=-\frac{2}{3}\sigma<0$ in the critical case $p=p_{\mathrm{crit}}(n,\sigma)$. We postpone the lifespan estimates from the above side in the next section due to its challenge. Our proof on blow-up of solutions is complete.

%\newpage 
\section{Sharp lifespan estimates for the semilinear Cauchy problem}\label{Sec-Lifespan}
\subsection{Proof of Theorem \ref{Thm-Lifespan-lower}: Lower bound estimates for the lifespan}\label{Subsec-Lower}
$\ \ \ \ $We will use some notations from Section \ref{Sec-GESDS}, particularly, the evolution space  $X(T)$ introduced in \eqref{Evol-Space}. In order to derive lower bound estimates for the lifespan, we will deduce a new nonlinear inequality with time-dependent coefficients instead of \eqref{Cruc-01} because of $1<p\leqslant p_{\mathrm{crit}}(n,\sigma)$. Let us consider the dimension fulfills the condition \eqref{Restriction(n,sigma)}, and the remaining case for $n=\frac{8}{3}\sigma$ when $\eta\in(3,\infty)$ can be treated similarly by following Remark \ref{Rem-spe}. 

Thanks to the definition of mild solutions in \eqref{Mild-Solution}, we may represent and estimate local (in time) mild solutions to \eqref{Eq-Third-PDE} straightforwardly. According to the derived estimates \eqref{Nonlinearity-01} in Section \ref{Sec-GESDS}, one arrives at
\begin{align*}
(1+t)^{\frac{3n-8\sigma}{4\sigma}}\|u(t,\cdot)\|_{L^2}&\lesssim\epsilon\|(u_0,u_1,u_2)\|_{\ml{B}_{\sigma}} +\int_0^{t/2}(1+\tau)^{-\frac{3n-4\sigma}{2\sigma}p+\frac{3n}{2\sigma}}\mathrm{d}\tau\,\|u\|_{X(T)}^p\\
&\quad+(1+t)^{1+\frac{3n}{2\sigma}-\frac{3n-4\sigma}{2\sigma}p}\|u\|_{X(T)}^p,
\end{align*}
due to $n<4\sigma$ and the restriction \eqref{Cond-01} from some applications of the fractional Gagliardo-Nirenberg inequality. Moreover, the condition \eqref{Restriction(n,sigma)} for the sharp estimates of solutions to the linearized Cauchy problem should be guaranteed. Let us denote the growth function
\begin{align*}
L(t):=\begin{cases}
(1+t)^{1+\frac{3n}{2\sigma}-\frac{3n-4\sigma}{2\sigma}p}&\mbox{if}\ \ 1<p<p_{\mathrm{crit}}(n,\sigma),\\
\ln(\mathrm{e}+t)&\mbox{if}\ \ p=p_{\mathrm{crit}}(n,\sigma).
\end{cases}
\end{align*}
Since $-\frac{3n-4\sigma}{2\sigma}p+\frac{3n}{2\sigma}=-1$ if $p=p_{\mathrm{crit}}(n,\sigma)$, we can obtain
\begin{align*}
(1+t)^{\frac{3n-8\sigma}{4\sigma}}\|u(t,\cdot)\|_{L^2}\lesssim\epsilon\|(u_0,u_1,u_2)\|_{\ml{B}_{\sigma}} +L(t)\|u\|_{X(T)}^p,
\end{align*}
and, analogously,
\begin{align*}
(1+t)^{\frac{3n}{4\sigma}}\|u(t,\cdot)\|_{\dot{H}^{\frac{4}{3}\sigma}}\lesssim\epsilon\|(u_0,u_1,u_2)\|_{\ml{B}_{\sigma}} +L(t)\|u\|_{X(T)}^p.
\end{align*}
Recalling the definition of $X(T)$ in \eqref{Evol-Space}, the next crucial nonlinear inequality holds:
\begin{align}\label{Cruc-03}
\|u\|_{X(T)}\leqslant C_0\epsilon+C_1L(t)\|u\|_{X(T)}^p,
\end{align}
where $C_0,C_1$ are two positive constants independent of $\epsilon,T$, providing that
\begin{align*}
2\leqslant p\leqslant\min\left\{p_{\mathrm{crit}}(n,\sigma),\frac{3n}{(3n-8\sigma)_+} \right\}=p_{\mathrm{crit}}(n,\sigma).
\end{align*}
The above minimal competition trivially holds when $n\leqslant\frac{8}{3}\sigma$, and
\begin{align*}
1+\frac{6\sigma}{3n-4\sigma}=p_{\mathrm{crit}}(n,\sigma)\leqslant\frac{3n}{3n-8\sigma}\ \ \mbox{when}\ \ \frac{8}{3}\sigma<n\leqslant\frac{10}{3}\sigma.
\end{align*}
 Note that from Section \ref{Sec-GESDS}, we restricted the uniform estimate $L(t)\lesssim 1$ in the supercritical case $p>p_{\mathrm{crit}}(n,\sigma)$ to ensure global (in time) existence of solutions.

Let us now introduce
\begin{align*}
T^*:=\sup\big\{T\in[0,T_{\epsilon,\mathrm{m}})\ \ \mbox{such that}\ \ \ml{G}(T):=\|u\|_{X(T)}\leqslant M\epsilon \big\}
\end{align*}
with a sufficient large constant $M>0$ to be fixed later, which means $\ml{G}(T^*)\leqslant M\epsilon$. Plugging this upper bound into \eqref{Cruc-03} with $T=T^*$, we claim
\begin{align*}
\ml{G}(T^*)\leqslant C_0\epsilon+C_1L(T^*)\ml{G}(T^*)^p\leqslant \epsilon\left(C_0+C_1M^{p}\epsilon^{p-1}L(T^*)\right).
\end{align*}
By choosing a large constant $M$ such that
\begin{align*}
	M>4C_0\ \ \mbox{as well as}\ \ 4C_1M^{p-1}\epsilon^{p-1}L(T^*)<1,
\end{align*}
 we may have $\ml{G}(T^*)<\frac{M}{2}\epsilon$. Due to the fact that $\ml{G}(T)$ is a continuous function for any $T\in(0,T_{\epsilon,\mathrm{m}})$, nevertheless, the last estimate verifies that there exists a time $T_0\in(T^*,T_{\epsilon,\mathrm{m}})$ such that $\ml{G}(T_0)\leqslant M\epsilon$. It contradicts to the definition of $T^*$, i.e. the supremum of time for $\ml{G}(T)\leqslant M\epsilon$. Namely, we need to propose the reverse relation as follows:
\begin{align}\label{E5}
4C_1M^{p-1}\epsilon^{p-1}L(T^*)\geqslant 1.
\end{align}
Consequently, according to \eqref{E5} and the expression of $L(T^*)$ with $T^*\leqslant T_{\epsilon,\mathrm{m}}$, the maximum time of local (in time) existence for mild solutions can be estimated from the below side such that
\begin{itemize}
	\item when $1<p<p_{\mathrm{crit}}(n,\sigma)$, it holds
	\begin{align*}
	T_{\epsilon,\mathrm{m}}\geqslant C\epsilon^{-\frac{p-1}{-\frac{3n-4\sigma}{2\sigma}p+1+\frac{3n}{2\sigma}}}=C\epsilon^{-\frac{2\sigma}{6\sigma p'-(3n+2\sigma)}},
	\end{align*}
	where we used H\"older's conjugate $p'=\frac{p}{p-1}$;
	\item when $p=p_{\mathrm{crit}}(n,\sigma)$, it holds
	\begin{align*}
	T_{\epsilon,\mathrm{m}}\geqslant \exp(C\epsilon^{-(p-1)}).
	\end{align*}
\end{itemize}
In conclusion, we have achieved our aim of lower bound estimates of the lifespan.

%\newpage
\subsection{Proof of Theorem \ref{Thm-Lifespan-upper}: Upper bound estimates for the lifespan}\label{Subsec-Upper}
$\ \ \ \ $Note that the lifespan in the subcritical case $1<p<p_{\mathrm{crit}}(n,\sigma)$ has been estimated in \eqref{Upper-Subcrit}. For this reason, we will concentrate on upper bound estimates for the lifespan $T_{\epsilon,\mathrm{w}}$ in the critical case $p=p_{\mathrm{crit}}(n,\sigma)$ only. Nevertheless, the proof associated with new test functions is quite different from the one of Theorem \ref{Thm-Blow-up}, and upper bound estimates for the lifespan in the critical case is always a delicate part (for example, the semilinear classical damped waves \cite{Lai-Zhou=2019}). Throughout this subsection, we clarify $p=p_{\mathrm{crit}}(n,\sigma)$ in all statements.

Let us define the size of supports for initial data $u_j\in\ml{C}_0^{\infty}$ via
\begin{align*}
r_0:=\max\Big\{|x|:\ x\in \bigcup\limits_{j=0,1,2}\mathrm{supp}\,u_j(x) \Big\}.
\end{align*}
Different from those in the proof of Theorem \ref{Thm-Blow-up}, we now introduce the $\ml{C}_0^{\infty}$ test function $\chi=\chi(\tau)$ and the $L^{\infty}$ cutoff function $\chi^*=\chi^*(\tau)$ as follows:
\begin{align*}
\chi(\tau):=\begin{cases}
1&\mbox{if}\ \ \tau\in[0,\frac{1}{2}],\\
\mbox{decreasing}&\mbox{if}\ \ \tau\in(\frac{1}{2},1),\\
0&\mbox{if}\ \ \tau\in[1,\infty),
\end{cases}
\ \ \mbox{and}\ \ 
\chi^*(\tau):=\begin{cases}
0&\mbox{if}\ \ \tau\in[0,\frac{1}{2}),\\
\chi(\tau)&\mbox{if}\ \ \tau\in[\frac{1}{2},\infty).
\end{cases}
\end{align*}
For a large parameter $R\in(0,\infty)$, we may define
\begin{align}\label{Su01}
\psi_R(t,x):=\left[\chi\left(\tfrac{t+|x|^{\frac{2}{3}\sigma}}{R}\right)\right]^m\ \ \mbox{and}\ \ \psi^*_R(t,x):=\left[\chi^*\left(\tfrac{t+|x|^{\frac{2}{3}\sigma}}{R}\right)\right]^m
\end{align}
with a suitable parameter $m>2\sigma$ that will be chosen later. 
%Indeed, it is clear that
%\begin{align*}
%\mathrm{supp}\ \psi_R(t,x)&\subset\big\{(t,x):\ t\in[0,R],\  |x|\in[0,R^{\frac{3}{2\sigma}}]  \big\},\\
%\mathrm{supp}\ \psi^*_R(t,x)&\subset\big\{(t,x):\ t\in[\tfrac{R}{2},R],\ |x|\in[(\tfrac{R}{2})^{\frac{3}{2\sigma}}, R^{\frac{3}{2\sigma}}] \big\}.
%\end{align*}

Let us multiply the test function $\psi_R(t,x)$ on the both sides of the equation in \eqref{Eq-Third-PDE}, and then integrate the resultant over $\mb{R}^n$ to deduce
\begin{align}\label{E6}
\int_{\mb{R}^n}|u(t,x)|^p\psi_R(t,x)\mathrm{d}x&=\int_{\mb{R}^n}u(t,x)\left(-\partial_t^3+\ml{A}+\eta\ml{A}^{\frac{1}{3}}\partial_t^2-\eta\ml{A}^{\frac{2}{3}}\partial_t\right)\psi_R(t,x)\mathrm{d}x\notag\\
&\quad+\frac{\mathrm{d}^3}{\mathrm{d}t^3}\int_{\mb{R}^n}u(t,x)\psi_R(t,x)\mathrm{d}x+\eta\frac{\mathrm{d}^2}{\mathrm{d}t^2}\int_{\mb{R}^n}u(t,x)\ml{A}^{\frac{1}{3}}\psi_R(t,x)\mathrm{d}x\notag\\
&\quad+\eta\frac{\mathrm{d}}{\mathrm{d}t}\int_{\mb{R}^n}u(t,x)\ml{A}^{\frac{2}{3}}\psi_R(t,x)\mathrm{d}x-3\frac{\mathrm{d}}{\mathrm{d}t}\int_{\mb{R}^n}u_t(t,x)\psi_{R,t}(t,x)\mathrm{d}x\notag\\
&\quad-2\eta\frac{\mathrm{d}}{\mathrm{d}t}\int_{\mb{R}^n}u(t,x)\ml{A}^{\frac{1}{3}}\psi_{R,t}(t,x)\mathrm{d}x,
\end{align}
in which we denoted $\psi_{R,t}(t,x):=\partial_t\psi_R(t,x)$.  Before constructing a nonlinear differential inequality, we need to estimate time-derivatives and fractional Laplacians acting on the test function from the first term on the right-hand side of \eqref{E6}. A straightforward computation yields
\begin{align*}
\psi_{R,ttt}(t,x)&=m(m-1)(m-2)R^{-3}\left[\chi\left(\tfrac{t+|x|^{\frac{2}{3}\sigma}}{R}\right)\right]^{m-3}\left[\chi'\left(\tfrac{t+|x|^{\frac{2}{3}\sigma}}{R}\right)\right]^3\\
&\quad+3m(m-1)R^{-3}\left[\chi\left(\tfrac{t+|x|^{\frac{2}{3}\sigma}}{R}\right)\right]^{m-2}\chi'\left(\tfrac{t+|x|^{\frac{2}{3}\sigma}}{R}\right)\chi''\left(\tfrac{t+|x|^{\frac{2}{3}\sigma}}{R}\right)\\
&\quad+mR^{-3}\left[\chi\left(\tfrac{t+|x|^{\frac{2}{3}\sigma}}{R}\right)\right]^{m-1}\chi'''\left(\tfrac{t+|x|^{\frac{2}{3}\sigma}}{R}\right).
\end{align*}
From the settings that $\chi\in\ml{C}_0^{\infty}([0,\infty))$, $|\chi(\tau)|\leqslant 1$ and
\begin{align*}
0<\left|\chi^{(k)}\left(\tfrac{t+|x|^{\frac{2}{3}\sigma}}{R}\right)\right|\lesssim 1\ \ \mbox{for}\ \ \frac{R}{2}<t+|x|^{\frac{2}{3}\sigma}<R
\end{align*}
with $k=1,2,3$, we may claim
\begin{align*}
|\psi_{R,ttt}(t,x)|\lesssim R^{-3}\left[\chi^{*}\left(\tfrac{t+|x|^{\frac{2}{3}\sigma}}{R}\right)\right]^{m-3}=R^{-3}[\psi_R^*(t,x)]^{\frac{m-3}{m}}.
\end{align*}
To control the terms $(-\Delta)^{q\sigma}\psi_R(t,x)$ with $q=\frac{1}{3},\frac{2}{3},1$, motivated by \cite[Lemma 3.3]{Dao-Reissig=2019}, we first apply the derivative of composed function with any multi-index $\alpha$ to obtain
\begin{align*}
\left|\partial_x^{\alpha}\chi\left(\tfrac{t+|x|^{\frac{2}{3}\sigma}}{R}\right)\right|&\lesssim\sum\limits_{k=1}^{|\alpha|}\chi^{(k)}\left(\tfrac{t+|x|^{\frac{2}{3}\sigma}}{R}\right)\sum\limits_{|\gamma_1|+\cdots+|\gamma_k|=|\alpha|}\left|\partial_x^{\gamma_1}\left(\tfrac{t+|x|^{\frac{2}{3}\sigma}}{R}\right)\right|\cdots\left|\partial_x^{\gamma_k}\left(\tfrac{t+|x|^{\frac{2}{3}\sigma}}{R}\right)\right|\\
&\lesssim\sum\limits_{k=1}^{|\alpha|}\chi^{(k)}\left(\tfrac{t+|x|^{\frac{2}{3}\sigma}}{R}\right)|x|^{\frac{2}{3}\sigma k-|\alpha|}R^{-k}\\
&\lesssim R^{-1}|x|^{\frac{2}{3}\sigma-|\alpha|},
\end{align*}
where we employed $\tau=t+|x|^{\frac{2}{3}\sigma}\leqslant R$, otherwise $\chi^{(k)}(\tau)=0$ for $k=1,\dots,|\alpha|$. Note that in the last chain, we took $k=1$ due to $|x|^{\frac{2}{3}\sigma}\leqslant R$. From the assumption $\sigma\in3\mb{N}$ implying $q\sigma\in\mb{N}$, a further application of such rule shows
\begin{align*}
|(-\Delta)^{q\sigma}\psi_R(t,x)|&\lesssim\sum\limits_{k=1}^{2q\sigma}\left[\chi\left(\tfrac{t+|x|^{\frac{2}{3}\sigma}}{R}\right)\right]^{m-k}\sum\limits_{|\gamma_1|+\cdots+|\gamma_{k}|=2q\sigma}\left|\partial_x^{\gamma_1}\chi\left(\tfrac{t+|x|^{\frac{2}{3}\sigma}}{R}\right)\right|\cdots\left|\partial_x^{\gamma_k}\chi\left(\tfrac{t+|x|^{\frac{2}{3}\sigma}}{R}\right)\right|\\
&\lesssim\sum\limits_{k=1}^{2q\sigma}\left[\chi^*\left(\tfrac{t+|x|^{\frac{2}{3}\sigma}}{R}\right)\right]^{m-k}|x|^{\frac{2}{3}\sigma k-2q\sigma}R^{-k}\\
&\lesssim R^{-3q}\left[\chi^*\left(\tfrac{t+|x|^{\frac{2}{3}\sigma}}{R}\right)\right]^{m-2q\sigma}=R^{-3q}[\psi_R^*(t,x)]^{\frac{m-2q\sigma}{m}},
\end{align*}
where we employed $\frac{R}{2}\leqslant |x|^{\frac{2}{3}\sigma}\leqslant R$ again in the last line. Therefore, by an analogous way, one has
\begin{align*}
|(-\Delta)^{\sigma}\psi_R(t,x)|&\lesssim R^{-3}[\psi_R^*(t,x)]^{\frac{m-2\sigma}{m}},\\
|(-\Delta)^{\frac{2}{3}\sigma}\psi_{R,t}(t,x)|&\lesssim R^{-3}[\psi_R^*(t,x)]^{\frac{m-\frac{4}{3}\sigma-1}{m}},\\
|(-\Delta)^{\frac{1}{3}\sigma}\psi_{R,tt}(t,x)|&\lesssim R^{-3}[\psi_R^*(t,x)]^{\frac{m-\frac{2}{3}\sigma-2}{m}}.
\end{align*}
The summary of last estimates with $\sigma\in3\mb{N}$ indicates
\begin{align*}
\left|\left(-\partial_t^3+\ml{A}+\eta\ml{A}^{\frac{1}{3}}\partial_t^2-\eta\ml{A}^{\frac{2}{3}}\partial_t\right)\psi_R(t,x)\right|\lesssim R^{-3}[\psi_R^*(t,x)]^{1-\frac{2\sigma }{m}}.
\end{align*}
For this reason, let us integrate \eqref{E6} over $(0,T_{\epsilon,\mathrm{w}})$ to get
\begin{align*}
&\int_0^{T_{\epsilon,\mathrm{w}}}\int_{\mb{R}^n}|u(t,x)|^p\psi_R(t,x)\mathrm{d}x\mathrm{d}t\lesssim R^{-3}\int_0^{T_{\epsilon,\mathrm{w}}}\int_{\mb{R}^n}|u(t,x)|[\psi_R^*(t,x)]^{1-\frac{2\sigma}{m}}\mathrm{d}x\mathrm{d}t\\
&\qquad+\int_{\mb{R}^n}\big(u_{tt}(t,x)\psi_R(t,x)+2u_t(t,x)\psi_{R,t}(t,x)+u(t,x)\psi_{R,tt}(t,x)\big)\mathrm{d}x\Big|_{t=0}^{t=T_{\epsilon,\mathrm{w}}}\\
&\qquad+\int_{\mb{R}^n}\big(\eta u_{t}(t,x)\ml{A}^{\frac{1}{3}}\psi_{R}(t,x)+\eta u(t,x)\ml{A}^{\frac{1}{3}}\psi_{R,t}(t,x)\big)\mathrm{d}x\Big|_{t=0}^{t=T_{\epsilon,\mathrm{w}}}\\
&\qquad+\int_{\mb{R}^n}\big(\eta u(t,x)\ml{A}^{\frac{2}{3}}\psi_{R}(t,x)-3u_t(t,x)\psi_{R,t}(t,x)-2\eta u(t,x)\ml{A}^{\frac{1}{3}}\psi_{R,t}(t,x)\big)\mathrm{d}x\Big|_{t=0}^{t=T_{\epsilon,\mathrm{w}}}.
\end{align*}
Without loss of generality, we assume the lifespan fulfilling
\begin{align*}
\sqrt{T_{\epsilon,\mathrm{w}}}>R_0:=\sqrt{2}r_0^{\frac{1}{3}\sigma}
\end{align*} so that concerning $R\in[R_0^2,T_{\epsilon,\mathrm{w}})$ we notice
\begin{align*}
\frac{T_{\epsilon,\mathrm{w}}+|x|^{\frac{2}{3}\sigma}}{R}\geqslant 1\ \ \mbox{for}\ \ x\in\mb{R}^n,\ \ \mbox{and}\ \ \frac{|x|^{\frac{2}{3}\sigma}}{R}\leqslant\frac{r_0^{\frac{2}{3}\sigma}}{R_0^2}=\frac{1}{2}\ \ \mbox{for}\ \ x\in B_{r_0}.
\end{align*}
For these settings, we know $\partial_t^k\psi_R(T_{\epsilon,\mathrm{w}},x)=0$ with $k=0,1,2$ for $x\in\mb{R}^n$, moreover,  $\psi_R(0,x)=1$ and $\partial_t^k\psi_R(0,x)=0$ with $k=1,2$ for $x\in B_{r_0}$. Benefited from the support conditions for initial data in a ball $B_{r_0}$, it leads to
\begin{align}\label{E7} 
&\epsilon\int_{\mb{R}^n}u_2(x)\mathrm{d}x+\int_0^{T_{\epsilon,\mathrm{w}}}\int_{\mb{R}^n}|u(t,x)|^p\psi_R(t,x)\mathrm{d}x\mathrm{d}t\notag\\
&\qquad\lesssim R^{-3}\int_0^{T_{\epsilon,\mathrm{w}}}\int_{\mb{R}^n}|u(t,x)|[\psi_R^*(t,x)]^{1-\frac{2\sigma}{m}}\mathrm{d}x\mathrm{d}t\notag\\
%&\qquad\lesssim R^{-3}\big|\mathrm{supp}\, \psi_R^*(t,x)\big|^{\frac{1}{p'}}\left(\int_0^{T_{\epsilon,\mathrm{w}}}\int_{\mb{R}^n}|u(t,x)|^p[\psi_R^*(t,x)]^{p-\frac{2\sigma p}{m}}\mathrm{d}x\mathrm{d}t\right)^{\frac{1}{p}}\\
&\qquad\lesssim R^{(1+\frac{3n}{2\sigma})\frac{1}{p'}-3}\left(\int_0^{T_{\epsilon,\mathrm{w}}}\int_{\mb{R}^n}|u(t,x)|^p[\psi_R^*(t,x)]^{p-\frac{2\sigma p}{m}}\mathrm{d}x\mathrm{d}t\right)^{\frac{1}{p}},
\end{align}
in which we used H\"older's inequality associated with 
\begin{align*}
\mathrm{supp}\,\psi^*_R(t,x)\subset\left([0,R]\times B_{R^{\frac{3}{2\sigma}}}\right)\backslash\left\{(t,x):\ t+|x|^{\frac{2}{3}\sigma}\leqslant\tfrac{R}{2} \right\}.
\end{align*}

Let us now introduce the auxiliary functional
\begin{align*}
Y_p(R):=\int_0^Ry_p(r)r^{-1}\mathrm{d}r\ \ \mbox{with}\ \ y_p(r):=\int_0^{T_{\epsilon,\mathrm{w}}}\int_{\mb{R}^n}|u(t,x)|^p[\psi_r^*(t,x)]^{p-\frac{2\sigma p}{m}}\mathrm{d}x\mathrm{d}t.
\end{align*}
By letting $\tau=\frac{1}{r}(t+|x|^{\frac{2}{3}\sigma})$ as a new integral variable, according to the support condition and non-increasing property of $\chi(\tau)$, we are able to derive
\begin{align*}
Y_p(R)&=\int_0^R\left(\int_0^{T_{\epsilon,\mathrm{w}}}\int_{\mb{R}^n}|u(t,x)|^p[\psi_r^*(t,x)]^{p-\frac{2\sigma p}{m}}\mathrm{d}x\mathrm{d}t\right)r^{-1}\mathrm{d}r\\
&\leqslant\int_0^{T_{\epsilon,\mathrm{w}}}\int_{\mb{R}^n}|u(t,x)|^p\left(\int_{(t+|x|^{\frac{2}{3}\sigma})/R}^{\infty}\,[\chi^*(\tau)]^{(m-2\sigma)p}\tau^{-1}\mathrm{d}\tau\right)\mathrm{d}x\mathrm{d}t\\
&\leqslant\int_0^{T_{\epsilon,\mathrm{w}}}\int_{\mb{R}^n}|u(t,x)|^p\sup\limits_{r\in(0,R)}\left[\chi^*\left(\tfrac{t+|x|^{\frac{2}{3}\sigma}}{r}\right)\right]^{(m-2\sigma)p}\int_{1/2}^{1}\tau^{-1}\mathrm{d}\tau\mathrm{d}x\mathrm{d}t\\
&\leqslant\ln 2\int_0^{T_{\epsilon,\mathrm{w}}}\int_{\mb{R}^n}|u(t,x)|^p\left[\chi^*\left(\tfrac{t+|x|^{\frac{2}{3}\sigma}}{R}\right)\right]^{(m-2\sigma)p}\mathrm{d}x\mathrm{d}t
\end{align*}
since $m>2\sigma$. Using $\chi^*(\tau)\leqslant \chi(\tau)$, it holds
\begin{align}\label{E8}
Y_p(R)&\lesssim\int_0^{T_{\epsilon,\mathrm{w}}}\int_{\mb{R}^n}|u(t,x)|^p[\psi_R(t,x)]^{p-\frac{2\sigma p}{m}}\mathrm{d}x\mathrm{d}t=\int_0^{T_{\epsilon,\mathrm{w}}}\int_{\mb{R}^n}|u(t,x)|^p\psi_R(t,x)\mathrm{d}x\mathrm{d}t
\end{align}
by choosing $m>\max\{2\sigma,n+\frac{2}{3}\sigma \}$
 because of the critical case $p=p_{\mathrm{crit}}(n,\sigma)$ in our consideration. Recalling $Y'_p(R)=y_p(R)R^{-1}$, and combining with \eqref{E7} as well as \eqref{E8} so far we arrive at
\begin{align}\label{E9}
\left(\epsilon\int_{\mb{R}^n}u_2(x)\mathrm{d}x+Y_p(R)\right)^p\lesssim y_p(R)=R\,Y'_p(R)
\end{align}
for $R\in[R_0^2,T_{\epsilon,\mathrm{w}})$. Indeed, taking an integration over $[R_0,R]$ in \eqref{E9}, we have
\begin{align}\label{G1}
Y_p(R)\gtrsim\epsilon^{p_{\mathrm{crit}}(n,\sigma)}\left|\int_{\mb{R}^n}u_2(x)\mathrm{d}x\right|^p\int_{R_0}^R r^{-1}\mathrm{d}r\geqslant C_{u_2,p}\,\epsilon^{p_{\mathrm{crit}}(n,\sigma)}\ln R
\end{align}
under the sign condition \eqref{Sign_Assumption}. Another viewpoint of \eqref{E9} indicates
\begin{align*}
\frac{\mathrm{d}}{\mathrm{d}R}\left[\frac{1}{1-p}[Y_p(R)]^{1-p}\right]=[Y_p(R)]^{-p}\,Y'_p(R)\gtrsim R^{-1}.
\end{align*}
The integration of the last inequality over $[\sqrt{R},R]$ gives
\begin{align}\label{G2} 
\ln R\lesssim\frac{1}{p-1}\left([Y_p(\sqrt{R})]^{1-p}-[Y_p(R)]^{1-p}\right)\lesssim [Y_p(\sqrt{R})]^{1-p_{\mathrm{crit}}(n,\sigma)}.
\end{align}
Remember that $p_{\mathrm{crit}}(n,\sigma)>1$. We combine \eqref{G1} and \eqref{G2} to deduce
\begin{align*}
\ln R\lesssim C\epsilon^{p_{\mathrm{crit}}(n,\sigma)(1-p_{\mathrm{crit}}(n,\sigma))}(\ln R)^{1-p_{\mathrm{crit}}(n,\sigma)},
\end{align*}
as a consequence,
\begin{align*}
\ln\sqrt{T_{\epsilon,\mathrm{w}}}=\lim\limits_{R\uparrow\sqrt{T_{\epsilon,\mathrm{w}}}}\ln R\leqslant C\epsilon^{-(p_{\mathrm{crit}}(n,\sigma)-1)}.
\end{align*}
Finally, taking the action of the exponential function provides the completeness of our proof.

\section*{Acknowledgments}
The author was partially supported by the National Natural Science Foundation of China under Grant No. 12171317.
% The second author was supported in part by Grant-in-Aid for scientific Research (C) 20K03682 of JSPS.

% ------------------------------------------------------------------------

\begin{thebibliography}{99}\normalsize
%%%%%%
\bibitem{Bezerra-Santos=2020}
\newblock F.D.M. Bezerra, L.A. Santos, 
\newblock Fractional powers approach of operators for abstract evolution equations of third order in time. 
\newblock \emph{J. Differential Equations} \textbf{269} (2020), no. 7, 5661--5679.
%%%%%%
\bibitem{Bezerra-Carvalho-Santos=2022}
\newblock F.D.M. Bezerra, A.N. Carvalho, L.A. Santos, 
\newblock Well-posedness for some third-order evolution differential equations: a semigroup approach. 
\newblock \emph{J. Evol. Equ.} \textbf{22} (2022), no. 2, Paper No. 53.
%%%%%%
\bibitem{B-L-2020}
\newblock M. Bongarti, C. Sutthirut, I. Lasiecka,
\newblock Vanishing relaxation time dynamics of the Jordan Moore-Gibson-Thompson equation arising in nonlinear acoustics.
\newblock \emph{J. Evol. Equ.} \textbf{21} (2021), 3553--3584.
%%%%%%
\bibitem{D'Abbicco-Ebert=2017}
\newblock M. D'Abbicco, M.R. Ebert, 
\newblock A new phenomenon in the critical exponent for structurally damped semi-linear evolution equations. 
\newblock \emph{Nonlinear Anal.} \textbf{149} (2017), 1--40.
%%%%%%
\bibitem{D'Abbicco-Fujiwara=2021}
\newblock M. D'Abbicco, K. Fujiwara,
\newblock A test function method for evolution equations with fractional powers of the Laplace operator.
\newblock \emph{Nonlinear Anal.} \textbf{202} (2021), Paper No. 112114.
%%%%%%
\bibitem{D'Abbicco-Lucente-Reissig=2013}
\newblock M. D'Abbicco, S. Lucente, M. Reissig, 
\newblock Semi-linear wave equations with effective damping. 
\newblock \emph{Chinese Ann. Math. Ser. B} \textbf{34} (2013), no. 3, 345-380.
%%%%%%
\bibitem{Dao-Reissig=2019} 
\newblock T.A. Dao, M. Reissig,
\newblock $L^1$ estimates for oscillating integrals and their applications to semi-linear models with $\sigma$-evolution like structural damping. 
\newblock \emph{Discrete Contin. Dyn. Syst.} \textbf{39} (2019), no. 9, 5431--5463.
%%%%%%
\bibitem{Dao-Reissig=2020}
\newblock T.A. Dao, M. Reissig, 
\newblock Blow-up results for semi-linear structurally damped sigma-evolution equations.
\newblock In: D. Cicognani, A. Parmeggiani, M. Reissig (Eds.), AnoMalies in PDE'S, Springer INdAM Series, 2020.
%%%%%%
\bibitem{Dietert-Gerard=2019}
\newblock H. Dietert, D. G\'erard-Varet, 
\newblock Well-posedness of the Prandtl equations without any structural assumption. 
\newblock \emph{Ann. PDE} \textbf{5} (2019), no. 1, Paper No. 8.
%%%%%%
\bibitem{Ebert-Girardi-Reissig=2020}
\newblock M.R. Ebert, G. Girardi, M. Reissig, 
\newblock Critical regularity of nonlinearities in semilinear classical damped wave equations. 
\newblock \emph{Math. Ann.} \textbf{378} (2020), no. 3-4, 1311--1326.
%%%%%%
\bibitem{Esco-Herr-1991}
\newblock M. Escobedo, M.A. Herrero,
\newblock Boundedness and blow up for a semilinear reaction-diffusion system.
\newblock \emph{J. Differential Equations} \textbf{89} (1991), no. 1, 176--202.
%%%%%%
\bibitem{Esco-Levi-1995}
\newblock M. Escobedo, H.A. Levine,
\newblock Critical blowup and global existence numbers for a weakly coupled system of reaction-diffusion equations.
\newblock \emph{Arch. Rational Mech. Anal.} \textbf{129} (1995), no. 1, 47--100.
%%%%%%
\bibitem{Fattorini=1983}
\newblock H.O. Fattorini, 
\newblock \emph{The Cauchy Problem}. 
\newblock In Encyclopedia of Mathematics and its Applications, Vol. 18. Addison-Wesley Publishing Company: Reading, Massachusetts, 1983.
%%%%%%
\bibitem{Fujita-1966}
\newblock H. Fujita,
\newblock On the blowing up of solutions of the Cauchy problem for $u_t= \Delta u+ u^{1+\alpha}$.
\newblock \emph{J. Fac. Sci. Univ. Tokyo Sect. I} \textbf{13} (1966), 109--124.
%%%%%%
\bibitem{Fuji-Iked-Waka-2020}
\newblock K. Fujiwara, M. Ikeda, Y. Wakasugi,
\newblock Lifespan of solutions for a weakly coupled system of semilinear heat equations.
\newblock \emph{Tokyo J. Math.} \textbf{43} (2020), no. 1, 163--180.
%%%%%%
\bibitem{Gerard-Dormy=2010}
\newblock D. G\'erard-Varet, E. Dormy, 
\newblock On the ill-posedness of the Prandtl equation. 
\newblock \emph{J. Amer. Math. Soc.} \textbf{23} (2010), no. 2, 591--609.
%%%%%%
\bibitem{Hajaiej-Molinet-Ozawa-Wang-2011}
\newblock H. Hajaiej, L. Molinet, T. Ozawa, B. Wang,
\newblock Necessary and sufficient conditions for the fractional Gagliardo-Nirenberg inequalities and applications to Navier-Stokes and generalized boson equations,
\newblock Harmonic analysis and nonlinear partial differential equations, 159--175,
\emph{RIMS K\^oky\^uroku Bessatsu, B26}, Res. Inst. Math. Sci. (RIMS), Kyoto, 2011.
%%%%%%
\bibitem{Iked-Ogaw-2016}
\newblock M. Ikeda, T. Ogawa,
\newblock Lifespan of solutions to the damped wave equation with a critical nonlinearity.
\newblock \emph{J. Differential Equations} \textbf{261} (2016), no. 3, 1880--1903.
%%%%%%
\bibitem{Ikeda-Sobajima=2019}
\newblock M. Ikeda, M. Sobajima, 
\newblock Sharp upper bound for lifespan of solutions to some critical semilinear parabolic, dispersive and hyperbolic equations via a test function method. 
\newblock \emph{Nonlinear Anal.} \textbf{182} (2019), 57--74.
%%%%%%
\bibitem{Ikeda-Wakasugi=2013}
\newblock M. Ikeda, Y. Wakasugi,
\newblock Small-data blow-up of $L^2$-solution for the nonlinear Schr\"odinger equation without gauge invariance.
\newblock \emph{Differential Integral Equations} \textbf{26} (2013), no. 11-12, 1275--1285.
%%%%%%
\bibitem{Ikehata=2014}
\newblock R. Ikehata, 
\newblock Asymptotic profiles for wave equations with strong damping. 
\newblock \emph{J. Differential Equations} \textbf{257} (2014), no. 6, 2159--2177.
%%%%%%
\bibitem{Ikehata-Takeda=2019} 
\newblock R. Ikehata, H. Takeda, 
\newblock Asymptotic profiles of solutions for structural damped wave equations. 
\newblock \emph{J. Dynam. Differential Equations} \textbf{31} (2019), no. 1, 537--571.
 %%%%%%
\bibitem{Kaltenbacher-Lasiecka-Pos-2012}
\newblock B. Kaltenbacher, I. Lasiecka, M.K. Pospieszalska, 
\newblock Well-posedness and exponential decay of the energy in the nonlinear Jordan-Moore-Gibson-Thompson equation arising in high intensity ultrasound.
\newblock \emph{Math. Models Methods Appl. Sci.} \textbf{22} (2012), no. 11, 1250035, 34 pp.
%%%%%%
\bibitem{Kaltenbacher-Nikolic-2021}
\newblock B. Kaltenbacher, V. Nikoli\'c, 
\newblock The inviscid limit of third-order linear and nonlinear acoustic equations.
\newblock \emph{SIAM J. Appl. Math.} \textbf{81} (2021), no. 4, 1461--1482.
%%%%%%
\bibitem{Lai-Zhou=2019}
\newblock N.-A. Lai, Y. Zhou, 
\newblock The sharp lifespan estimate for semilinear damped wave equation with Fujita critical power in higher dimensions. 
\newblock \emph{J. Math. Pures Appl. (9)} \textbf{123} (2019), 229--243.
%%%%%%
\bibitem{Li-Zhou-1995}
\newblock T.T. Li, Y. Zhou,
\newblock Breakdown of solutions to $\square u+u_t=|u|^{1+\alpha}$.
\newblock \emph{Discrete Contin. Dyn. Syst.} \textbf{1} (1995), no. 4, 503--520.
%%%%%%
\bibitem{Nasc-Palmieri-Reissig=2017} 
\newblock W.N. do Nascimento, A. Palmieri, M. Reissig,
\newblock Semi-linear wave models with power non-linearity and scale-invariant time-dependent mass and dissipation. 
\newblock \emph{Math. Nachr.} \textbf{290} (2017), no. 11-12, 1779--1805.
 %%%%%%
\bibitem{Pham-Kainane-Reissig=2015} 
\newblock D.T. Pham, M. Kainane Mezadek, M. Reissig, 
\newblock Global existence for semi-linear structurally damped $\sigma$-evolution models. 
\newblock \emph{J. Math. Anal. Appl.} \textbf{431} (2015), no. 1, 569--596.
%%%%%%
\bibitem{Racke-Said-2020}
\newblock R. Racke, B. Said-Houari,
\newblock Global well-posedness of the Cauchy problem for the 3D Jordan-Moore-Gibson-Thompson equation.
\newblock \emph{Commun. Contemp. Math.} \textbf{23} (2021), no. 7, Paper No. 2050069, 39 pp.
%%%%%%
\bibitem{Rodino=1993}
\newblock L. Rodino, 
\newblock \emph{Linear partial differential operators in Gevrey spaces}. 
\newblock World Scientific Publishing Co., Inc., River Edge, NJ, 1993.
%%%%%%
\bibitem{Said-2021-Bes}
\newblock B. Said-Houari,
\newblock Global existence for the Jordan-Moore-Gibson-Thompson equation in Besov spaces. \newblock \emph{J. Evol. Equ.} \textbf{22} (2022), 32.
%%%%%%
\bibitem{Todo-Yord-2001}
\newblock G. Todorova, B. Yordanov,
\newblock Critical exponent for a nonlinear wave equation with damping.
\newblock \emph{J. Differential Equations} \textbf{174} (2001), no. 2, 464--489.
%%%%%%
\bibitem{Wakasugi=2012}
\newblock Y. Wakasugi,
\newblock Small data global existence for the semilinear wave equation with space-time dependent damping.
\newblock \emph{J. Math. Anal. Appl.} \textbf{393} (2012), no. 1, 66--79.
%%%%%%
\bibitem{Zhang-2001}
\newblock Q.S. Zhang,
\newblock A blow-up result for a nonlinear wave equation with damping: the critical case.
\newblock \emph{C. R. Acad. Sci. Paris S\'er. I Math.} \textbf{333} (2001), no. 2, 109--114.
%%%%%%
\end{thebibliography}
\end{document}